\newtheorem{Theorem}{Theorem}[section] %numbering by section
\newtheorem{Lemma}[Theorem]{Lemma} 
\newtheorem{Remark}[Theorem]{Remark} 
\newtheorem{Corollary}[Theorem]{Corollary} 
\newtheorem{Definition}[Theorem]{Definition}
\renewcommand{\baselinestretch}{1.2}
\newcommand{\R}{\mathbb{R}}
\newcommand{\N}{\mathbb{N}}
\newcommand{\Z}{\mathbb{Z}}
\newcommand{\cF}{\mathcal{F}}
\newcommand{\cB}{\mathcal{B}}
\newcommand{\cC}{\mathcal{C}}
\newcommand{\cD}{\mathcal{D}}
\newcommand{\cH}{\mathcal{H}}
\newcommand{\cX}{\mathcal{X}}
\newcommand{\cA}{\mathcal{A}}
\newcommand{\Cov}{\operatorname{Cov}}
\DeclareMathOperator*{\cart}{\times}
\newcommand{\si}{\sigma}
\newcommand{\ep}{\varepsilon}
\newcommand{\om}{\omega}
\newcommand{\lf}{\left\lfloor}
\newcommand{\rf}{\right\rfloor}
\newcommand{\mi}{\wedge}
\newcommand{\ph}{\varphi}
\begin{document}

\begin{titlepage}
\title{\bf Consistent nonparametric change point detection combining CUSUM and marked empirical processes}

\author{{\sc Maria Mohr} and {\sc Natalie Neumeyer}\\ Department of Mathematics, University of Hamburg}

\maketitle

\renewcommand{\baselinestretch}{1.1}

\begin{abstract}
A weakly dependent time series regression model with multivariate covariates and univariate observations is considered, for which we develop a procedure to detect whether the nonparametric conditional mean function is stable in time against change point alternatives. Our proposal is based on a modified CUSUM type test procedure, which uses a sequential marked empirical process of residuals. We show weak convergence of the considered process to a centered Gaussian process under the null hypothesis of no change in the mean function and a stationarity assumption. This requires some sophisticated arguments for sequential empirical processes of weakly dependent variables. As a consequence we obtain convergence of Kolmogorov-Smirnov and Cramér-von Mises type test statistics. The proposed procedure acquires a very simple limiting distribution and nice consistency properties, features from which related tests are lacking.
We moreover suggest a bootstrap version of the procedure and discuss its applicability in the case of unstable variances. 
\end{abstract}

\vspace*{.5cm}

\noindent{\bf Key words:} bootstrap, change point detection, cumulative sums, distribution-free test, heteroscedasticity, kernel estimation, nonparametric regression, sequential empirical process

\noindent{\bf AMS 2010 Classification:} Primary 62M10, Secondary 62G08, 62G09, 62G10

\end{titlepage}

\small
\normalsize
\addtocounter{page}{1}

%%%%%%%%%%%%%%%%%%%%%%
\section{Introduction}  \label{introduction}
%%%%%%%%%%%%%%%%%%%%%%

Assume a finite sequence $(\bm{X}_t,Y_t)$, $t=1,\dots,n$, of a weakly dependent $\mathbb{R}^d\times\mathbb{R}$-valued time series 
 has been observed. Here, we interpret $\bm{X}_t$ as a covariate (which may contain past values of the process) and it is assumed that the conditional expectation of the observation $Y_t$, given $\bm{X}_t$ and all past values of the time series, does only depend on the covariate $\bm{X}_t$, and thus is a function $m_t(\bm{X}_t)$. We do not impose any parametric structure on the regression function.  For inference on the time series it is of importance whether the regression function is time dependent or not, i.e.\ the hypothesis 
\[H_0: \ m_t(\bm{X}_t)=m(\bm{X}_t) \text{ a.s. for all }t=1,\dots,n\]
(for some not further specified function $m$) should be tested against structural changes over time such as change point alternatives. 

Literature on such tests for nonparametric regression functions is rare in the time series context. 
Both \cite{Hidalgo1995671} and \cite{Honda199745} suggested CUSUM tests for change points in the regression function in nonparametric time series regression models with strictly stationary and absolutely regular data. \cite{Su2008347} extended these tests to  strongly mixing and not necessarily stationary  processes, allowing for heteroscedasticity, 
% and changes in the conditional variance function. However, their procedure does only seem to work for fixed changes in the variance. 
while \citet{Su20101761} proposed change point tests in partially linear time series models. \citet{Vogt2015811} constructed a kernel-based $L_2$-test for structural change in the regression function in time-varying nonparametric regression models with locally stationary regressors. 
%\cite{Hidalgo201619} proposed a test for smoothness in a nonparametric regression model with dependent data, based on the supremum of the difference between one-sided kernel regression estimates. 

We will combine the CUSUM approach as considered by \cite{Hidalgo1995671}, \cite{Honda199745}, and \cite{Su2008347} with a marked empirical process approach. 
Marked empirical processes have been suggested in a seminal paper by \cite{Stute1997613} for lack-of-fit testing in nonparametric regression models with i.i.d.~data. Since then they have been widely used for hypothesis testing in regression models, see \cite{Koul1999204} and \cite{Delgado20011469}, among many others. 
A marked empirical process approach has been applied by \cite{Burke2013261}  for change point detection in an i.i.d.~setting. In contrast to our approach they use a process of observations instead of residuals with a very complicated limit distribution, whereas we obtain a  simple limit distribution and even asymptotically distribution-free tests in the case of one-dimensional covariates. 
To this end we show weak convergence of a sequential marked empirical process of residuals under the null hypothesis. We further demonstrate consistency under fixed alternatives of one structural break in the regression function at some time point $\lfloor ns_0\rfloor$ for $n\to\infty$. 

Moreover we suggest a wild bootstrap version of our test that can be applied to detect changes in the mean function in the case of stable variances (as alternative to using the asymptotic distribution, e.g.\ for multivariate covariates) as well as in the case of non-stable variances. 
Wild bootstrap was first introduced by \cite{Wu19861261} and \cite{Liu19881696} for linear regression with heteroscedasticity. It was used in time series context by \cite{Kreiss1997} and \cite{Hafner2000177}, among others. 
The bootstrap version of our test can detect changes in the conditional mean function, even when the conditional variance function is also not stable, but -- as desired -- the test does not react sensitive to the unstable variance. If no change in the mean function is detected, a test for change in the variance function can be applied, which assumes a stable mean function. The latter approach will be considered in detail in a forthcoming manuscript.  
Most literature assumes stationary variances of the error terms (unconditional or conditional) when testing for changes in regression.
%, such as parameter instabilities in parametric models or changes in a nonparametric regression function. 
However, as \cite{Wu2016151} pointed out, non-stationary variances can occur and will most likely result in misleading inferences when not taken into account. Although this is a legitimate concern, not many results are available that deal with non-stationary variances. The CUSUM test by \cite{Su2008347} allows for breaks in the conditional variance function. But their procedure does only seem to work for fixed breaks that do not depend on the sample size, whereas we consider changes of the variance in some $\lfloor nt_0\rfloor$ for $n\to\infty$. There are some approaches for testing for parameter stability in parametric time series models that consider unstable variances, see \cite{Pitarakis200432}, \cite{Perron2008}, \cite{Kristensen2012420}, \cite{Cai2007163}, \cite{Xu2015274} and \cite{Wu2016151}. But all the settings considered do not fit into our framework as they either do not allow for autoregression models, by assuming stationarity of the regressor variables under the null, or they do not cover heteroscedastic effects. More precisely if heteroscedasticity is considered, variance instabilities are not modeled in the conditional variance function but as a time-varying constant.  

The paper is organized as follows. In section 2 we present the model and the sequential marked empirical process, on which the test statistics are based. Further the assumptions are listed. In section 3 we consider the limit distribution under the null hypothesis as well as consistency under the fixed alternative of one change point. The wild bootstrap version of the procedure is discussed in section 4, whereas simulations and a real data example are presented in section 5. Section 6 contains concluding remarks, whereas proofs are presented in the appendix. Some technical details and additional simulation results are deferred to a supplement.

%%%%%%%%%%%%%%%%%%%%%%
\section{The model and test statistic} \label{sec-model}
%%%%%%%%%%%%%%%%%%%%%%

Let $(Y_t,\bm{X}_t)_{t\in\Z}$ be a weakly dependent stochastic process in $\R\times\R^d$ following the regression model
\begin{equation*} %\label{eq:the model}
Y_t=m_t(\bm{X}_t)+U_t,\quad t\in\Z.
\end{equation*}
The covariate $\bm{X}_t$ may include finitely many lagged values of $Y_t$, for instance $\bm{X}_t=(Y_{t-1},\dots,Y_{t-d})$ such that the model includes nonparametric autoregression. 
The unobservable innovations $(U_t)_{t\in\Z}$ are assumed to fulfill $E[U_t|\cF^{t}]=0$ almost surely for the sigma-field $\cF^{t}=\sigma(U_{j-1},\bm{X}_{j}:j\le t)$. 
Our assumptions on the innovations are very weak; in particular heteroscedastic models will be covered.
Assuming $(Y_1,\bm{X}_1),\dots,(Y_n,\bm{X}_n)$ have been observed, our aim is to test the null hypothesis 
\begin{align*} %\label{eq:null hypothesis}
H_0: m_t(\cdot)=m(\cdot),  \ t=1,\dots,n,
\end{align*}
for the conditional mean function $E[Y_t|\bm{X}_t=\bm{x}]=m_t(\bm{x})$, $t\in\Z$, and some not specified function $m:\R^d\to\R$ not depending on the time of observation $t$. 
%The change point alternative we consider is
%
%\begin{align}
%H_{1}: \exists s_0\in (0,1): m_t(\cdot)=\begin{cases}m_{(1)}(\cdot), & t=1,\dots, \lf ns_0\rf \\ m_{(2)}(\cdot), & t=\lf ns_0\rf+1,\dots,n\end{cases},
%\label{eq:fixed alternative}
%\end{align}
%
%for some functions $m_{(1)},m_{(2)}:\R^d\to\R$ with $m_{(1)}\not\equiv m_{(2)}$.
To test $H_0$, we define the  \textit{sequential marked empirical process of residuals} as
\begin{equation}\label{hat-Tn}
\hat{T}_n(s,\bm{z})=\frac{1}{\sqrt{n}}\sum\limits_{i=1}^{\lf ns \rf} (Y_i-\hat{m}_n(\bm{X}_i))\om_n(\bm{X}_i)I\{\bm{X}_i\le \bm{z}\},
\end{equation}
for $s\in[0,1]$ and $\bm{z}\in\R^d$, where $\om_n(\cdot)=I\{\cdot\in\bm{J}_n\}$ with $\bm{J}_n$ from assumption \textbf{(J)} below. Throughout $\bm{x}\le \bm{y}$ is short for $x_j\le y_j, \ \forall \ j=1,\dots,d$, and we use the notations $\lf x \rf=\max \{k\in\Z:k\le x\}$ for $x\in\R$ and  $\bm{x}\mi\bm{y}=(\min\{x_1, y_1\},\dots,\min\{x_d, y_d\})$ as well as $\int _{(-\bm{\infty},\bm{x}]}g(\bm{u})d\bm{u}=\int_{-\infty}^{x_d}\dots\int_{-\infty}^{x_1}g(u_1,\dots,u_d)du_1\dots du_d$; further $I\{\cdot\}$ denotes the indicator function. 

The regression function $m$ is estimated by the Nadaraya-Watson estimator $\hat{m}_n$, where
\begin{eqnarray}\label{NadWat}
\hat{m}_n(\bm{x})&=& \dfrac{\sum_{j=1}^{n}K\Big(\frac{\bm{x}-\bm{X}_{j}}{h_n}\Big)Y_{j}}{\sum_{j=1}^{n}K\Big(\frac{\bm{x}-\bm{X}_{j}}{h_n}\Big)}\label{eq:NadWat}
\end{eqnarray}
with kernel function $K$ and bandwidth $h_n$ as considered in the assumptions below.

The proposed test is a modification of the CUSUM test in \cite{Su2008347}. They consider the process 
\[\frac{1}{\sqrt{n}}\sum\limits_{i=1}^{\lf ns \rf} (Y_i-\hat{m}_n(\bm{X}_i))\hat{f}_n(\bm{X}_i)w(\bm{X}_i),\]
where $w:\R^d\to\R$ is a weighting function and $\hat{f}_n$ is the kernel density estimator. While the factor $\hat{f}_n$ has technical reasons as small random values in the denominator of $\hat{m}_n$ can be avoided, the weighting function $w$ plays a crucial role for the power of their test  (see remarks to Theorem 3.2 in \cite{Su2008347}). Depending on the alternative, $w$ needs to be chosen appropriately, while their test (in contrast to the one based on the sequential marked process) is not generally consistent.

Under the null hypothesis $H_0$ we formulate the following assumptions in order to derive the limiting distribution of $\hat T_n$ and corresponding test statistics in the next section. 
%\textcolor{red}{Fehlende Notationen ergänzen und Annahmen anpassen: (P) löschen etc..}

\begin{enumerate}
%%%%%%%%%%%%%%%%%%%%%%%%%%%%%%%%%%%%%%%%%%%%% (G) %%%%%%%%%%%%%%%%%%%%%%%%%%%%%%%%%%%%%%%%%%%%%%%%%%%%%%%%%%%%%%%%%
\item[\textbf{(G)}] Let $(Y_t,\bm{X}_t)_{t\in\Z}$ be strictly stationary and $\alpha$-mixing with mixing coefficient $\alpha(\cdot)$ such that $\alpha(t)=O(a^{-t})$
for some $a\in(1,\infty)$. 
%
%%%%%%%%%%%%%%%%%%%%%%%%%%%%%%%%%%%%%%%%%%%%% (U) %%%%%%%%%%%%%%%%%%%%%%%%%%%%%%%%%%%%%%%%%%%%%%%%%%%%%%%%%%%%%%%%%
\item[\textbf{(U)}] For some $\gamma>0$ and some even $Q>(d+1)(2+\gamma)$, and $\cF^t=\sigma(U_{j-1},\bm{X}_j:j\le t)$, let $E[U_t|\cF^t]=0$,  $E[U_t^2|\bm{X}_t]=\si^2(\bm{X}_t)$ and
 $E[|U_t|^{Q\frac{2+\gamma}{2}}|\bm{X}_t]\le c(\bm{X}_t)^Q$ a.s. for all $t\in\Z$, for some functions $c, \sigma^2: \R^d\to\R$ with 
$\int \bar{c}(\bm{u})dF(\bm{u})\le M$ for some $M<\infty$ and $\bar{c}(\bm{u})=\max\left\{\si^2(\bm{u}),c(\bm{u})^2,\dots,c(\bm{u})^Q\right\}$. 
%
%%%%%%%%%%%%%%%%%%%%%%%%%%%%%%%%%%%%%%%%%%%%% (P) %%%%%%%%%%%%%%%%%%%%%%%%%%%%%%%%%%%%%%%%%%%%%%%%%%%%%%%%%%%%%%%%%
%\item[\textbf{(P)}] Let $(Y_t,\bm{X}_t)_{t\in\Z}$ be a strictly stationary and strongly mixing process with mixing coefficient $\alpha(\cdot)$. For some $b>2$ let
%\begin{align}
%\alpha(t)=O\left(t^{-\beta}\right), \ (t\to\infty) \text{ and } \beta>\frac{1+(b-1)\left(1+d\right)}{b-2}.\label{eq:mixing polynom}
%\end{align}
%%%%%%%%%%%%%%%%%%%%%%%%%%%%%%%%%%%%%%%%%%%%% (M) %%%%%%%%%%%%%%%%%%%%%%%%%%%%%%%%%%%%%%%%%%%%%%%%%%%%%%%%%%%%%%%%%
\item[\textbf{(M)}] For some $b>2$ let $E[|Y_1|^b]<\infty$ and let  $\bm{X}_1$ be absolutely continuous with density function $f:\R^d\to\R$ that satisfies $\sup_{\bm{x}\in\R^d}E[|Y_1|^b|\bm{X}_0=\bm{x}]f(\bm{x})<\infty$ and $\sup_{\bm{x}\in\R^d}f(\bm{x})<\infty$. Let there exist some $ j^*<\infty$ such that  $\sup_{\bm{x}_1,\bm{x}_j}E[|Y_1Y_j||\bm{X}_1=\bm{x}_1,\bm{X}_j=\bm{x}_j]f_{1j}(\bm{x}_1,\bm{x}_j)<\infty$ for all $ j\ge j^*$, where $f_{1j}$ is the density function of $(\bm{X}_1,\bm{X}_j)$.
% Additionally let $m(\cdot)=E[Y_1|\bm{X}_1=\cdot]$.
%%%%%%%%%%%%%%%%%%%%%%%%%%%%%%%%%%%%%%%%%%%%% (J) %%%%%%%%%%%%%%%%%%%%%%%%%%%%%%%%%%%%%%%%%%%%%%%%%%%%%%%%%%%%%%%%%
\label{page:(J)}
\item[\textbf{(J)}] Let $(c_n)_{n\in\N}$ be a positive sequence of real valued numbers satisfying $c_n\to \infty$ and $c_n=O((\log{n})^{1/d})$ and let $\bm{J}_n=[-c_n,c_n]^d$.
%%%%%%%%%%%%%%%%%%%%%%%%%%%%%%%%%%%%%%%%%%%%% (F) %%%%%%%%%%%%%%%%%%%%%%%%%%%%%%%%%%%%%%%%%%%%%%%%%%%%%%%%%%%%%%%%%
\label{page:(F)}
\item[\textbf{(F1)}] For some $C<\infty$ and $c_n$ from assumption \textbf{(J)} let $\bm{I}_n=[-c_n-Ch_n,c_n+Ch_n]^d$ and let $\delta_n^{-1}=\inf_{\bm{x}\in \bm{J}_n}f(\bm{x})>0$ for all $n\in\N$. Further, let for some $r,l\in\N$ and for all $n\in\N$
\begin{eqnarray*}
p_n&=&\max\limits_{\substack{\bm{k}\in\N_0^d\\1\le |\bm{k}|\le l+1+r}}\sup\limits_{\bm{x}\in \bm{I}_n}|D^{\bm{k}}f(\bm{x})|<\infty\\
0<q_n&=&\max\limits_{\substack{\bm{k}\in\N_0^d\\0\le |\bm{k}|\le l+1+r}}\sup\limits_{\bm{x}\in \bm{I}_n}|D^{\bm{k}}m(\bm{x})|<\infty,
\end{eqnarray*}
where $|\bm{i}|=\sum_{j=1}^{d}i_j$ and $D^{\bm{i}}=\frac{\partial^{|\bm{i}|}}{\partial x_1^{i_1}\dots\partial x_d^{i_d}}$ for $\bm{i}=(i_1,\dots,i_d)\in\N_0^d$.
\item[\textbf{(F2)}] For $q_n$ from assumption \textbf{(F1)}, $c_n$ from assumption \textbf{(J)} and $C$ from assumption \textbf{(K)}, let for all $\bm{k}\in\N_0^d$ with $|\bm{k}|=2$, $\sup_{\bm{x}\in[-c_n-2h_nC,c_n+2h_nC]^d}\left|D^{\bm{k}}m(\bm{x})\right|=O(q_n)$.
%%%%%%%%%%%%%%%%%%%%%%%%%%%%%%%%%%%%%%%%%%%%% (K) %%%%%%%%%%%%%%%%%%%%%%%%%%%%%%%%%%%%%%%%%%%%%%%%%%%%%%%%%%%%%%%%%
\item[\textbf{(K)}] Let $K:\R^d\to\R$ be symmetric in each component, $l+1$ times differentiable with $\int_{\R^d}K(\bm{z})d\bm{z}=1$ and compact support $[-C,C]^d$. Additionally,  let $r\ge 2$ and
$\int_{\R^d}K(\bm{z})\bm{z}^{\bm{k}}d\bm{z}=0$  for all $\bm{k}\in\N_0^d$  with $1\le |\bm{k}|\le r-1$,  where $\bm{z}^{\bm{k}}=z_1^{k_1}\cdots z_d^{k_d}$.
For all $L\in\{K\}\cup\{D^{\bm{k}}K: \bm{k}\in\N_0^d \text{ with } 1\le |\bm{k}|\le l+1\}$ let  $|L(\bm{u})|<\infty$ for all $\bm{u}\in\R^d$ and $|L(\bm{u})-L(\bm{u'})|\le \Lambda \|\bm{u}-\bm{u'}\|$ for some $\Lambda<\infty$ and for all $\bm{u},\bm{u'}\in\R^d$. (Here, $r,l$ and $C$ are from assumption \textbf{(F1)}.)
%%%%%%%%%%%%%%%%%%%%%%%%%%%%%%%%%%%%%%%%%%%%% (B) %%%%%%%%%%%%%%%%%%%%%%%%%%%%%%%%%%%%%%%%%%%%%%%%%%%%%%%%%%%%%%%%%
%\item[\textbf{(B1)}] With $b$ and $\beta$ from assumption \textbf{(P)} let 
%\begin{equation}
%\frac{\log{(n)}}{n^{\theta}h_n^d}=o(1) \text{ for } \theta=\frac{\beta-1-d-\frac{1+\beta}{b-1}}{\beta+3-d-\frac{1+\beta}{b-1}}.\label{eq:B1_01}
%\end{equation}
\item[\textbf{(B1)}]
For $\delta_n,p_n,q_n$ and $r,l$ from assumption \textbf{(F1)} let 
%\[\left(\sqrt{\frac{\log (n)}{nh_n^{d+2(l+1)}}}+h_n^rp_n\right)p_n^{l+1}q_n\delta_n^{l+2}=o(1)\label{eq:B1_01}\]
%and let 
\begin{align*}%\label{eq:B2_02}
\left(\sqrt{\frac{\log n}{nh_n^{d+2(l+1)}}}+h_n^{r}p_n\right)p_n^{l+1}\delta_n^{l+2}=O(1),
\end{align*}
and for some $\eta\in(0,1)$ let
\begin{align*}%\label{eq:B2_01}
\left(\sqrt{\frac{\log n}{nh_n^{d+2(l+1)}}}+h_n^{r}p_n\right)p_n^{l+\eta}q_n\delta_n^{l+1+\eta}=o(1).
\end{align*}
\item[\textbf{(B2)}] For $l, p_n, q_n, \delta_n$ from assumption \textbf{(F1)} and $\eta$ from assumption \textbf{(B1)}, let $h_n$ satisfy the following conditions
%Brauchen wir fuer Lemma \hat{h} durch h ersetzen
$$\frac{(\log n)^{3+\frac{d}{l+\eta}}}{\sqrt{n^{1-\frac{d}{l+\eta}}h_n^d}}q_n^2\delta_n^2=o(1), \;
\frac{\log {h_n}}{\sqrt{nh_n^d}}=o(1) ,\;
\sqrt{n}h_n^{r}p_nq_n=o(1),\;
(\log n)^3h_nq_n^2=o(1).$$
\end{enumerate}

%\begin{enumerate}
%\item[\textbf{(G)}] Let $(Y_t,\bm{X}_t)_{t\in\Z}$ be a strictly stationary, strongly mixing process with mixing coefficient $\alpha(\cdot)$ such that
%\begin{equation}
%\alpha(t)=O\left(a^{-t}\right), \ (t\to\infty) \label{eq:mixing exp}
%\end{equation}
%
%for some $a\in(1,\infty)$.
%\end{enumerate}
%%
%\begin{Remark}
%Note that this assumption means that the mixing coefficient decays at a geometric rate. This is strictly stronger than the polynomial rate of decay assumed in \textbf{(P)} in Chapter \ref{Kernel Estimates}. More precisely, under condition \eqref{eq:mixing exp} condition \eqref{eq:mixing polynom} in \textbf{(P)} on page \pageref{eq:mixing polynom} holds for arbitrary large $\beta$. As Hansen pointed out in \cite{Hansen2008726}, then condition \eqref{eq:B1_01} on the bandwidth in \textbf{(B1)} on page \pageref{eq:B1_01} simplifies to the less restrictive condition 
%\begin{align}\frac{\log (n)}{nh_n^d}=o(1).\label{eq:b1_simplified}\end{align}
 %The rates of convergence for the kernel estimates in Lemma \ref{Raten kernel} then hold for all $q>0$. 
%Furthermore (\ref{eq:mixing exp}) implies condition (\ref{eq:mixing sum}) in \textbf{(P)''} for all $Q>2$ and $\gamma>0$.
%\end{Remark}
%

%Bemerkungen zu den Annahmen
\begin{Remark}
Under aforementioned assumptions, consistency properties hold for $\hat{m}_n$ uniformly on $\bm{J}_n$ from assumption \textbf{(J)} which will be shown in section \ref{Appendix-nonparametric} of the appendix. The key tool here is an application of Theorem 2 in \cite{Hansen2008726}. Assumption \textbf{(G)} implies polynomial mixing rates of the underlying process needed in \cite{Hansen2008726}. Moreover, together with the first bandwidth condition in \textbf{(B2)} the bandwidth constraints in \cite{Hansen2008726} are also fulfilled. Assumptions \textbf{(M)} and parts of \textbf{(K)} are reproduced from aforementioned paper. 

%Concerning the bandwidth constraints the following remarks are to mention.
%\begin{enumerate}
%\item Note that condition \eqref{eq:b01} implies condition \eqref{eq:b1_simplified} on page \pageref{eq:b1_simplified}.
%\item 
In order to satisfy the first bandwidth condition in \textbf{(B2)}, a necessary condition on the smoothness of $f$ and $m$ then is $l+\eta>d$, meaning that for higher dimensional covariate $\bm{X}_t$, the existence of higher order partial derivatives of $f$ and $m$ is needed. 
%\item If $q_n$ and $\delta_n$ only have a $\log(n)$ rate, namely if there exist $r_1,r_2\ge 0$, such that 
%\[q_n=O\left(\log (n)^{r_1}\right) \text{ and } \delta_n=O\left(\log (n)^{r_2}\right),\]
%then condition \eqref{eq:b01} simplifies to
%\[\frac{\log (n)^{3+\frac{d}{l+\eta}+2r_1+2r_2}}{\sqrt{n^{1-\frac{d}{l+\eta}}h_n^d}}=o(1).\]
%For faster rates of $q_n$ and $\delta_n$, namely if there exist $r_1,r_2\ge 0$ and $s_1,s_2\ge 0$ such that
%\[q_n=O\left(\log(n)^{r_1}n^{s_1}\right) \text{ and } \delta_n=O\left(\log(n)^{r_2}n^{s_2}\right),\]
%then condition \eqref{eq:b01} is 
%\[\frac{\log(n)^{3+\frac{d}{l+\eta}+2r_1+2r_2}}{\sqrt{n^{1-\frac{d}{l+\eta}-4(s_1+s_2)}h_n^d}}=o(1).\]
%A necessary condition then is $1>\frac{d}{l+\eta}+4(s_1+s_2)$, which only allows for small $s_1,s_2$ and large $l$, depending on the dimension $d$.
%\item 
In order to satisfy both the first and third bandwidth condition in \textbf{(B2)}  at the same time, the order of the kernel needs to be large, 
%depending on both the dimension and the smoothness assumptions at the same time. 
in particular $r>\frac{d}{2}\frac{l+\eta}{l+\eta-d}$.
%\item 
The second bandwidth condition in \textbf{(B2)} is implied by the first one, if the bandwidth $h_n$ has a polynomial rate of decay in $n$ (or slower), meaning if there exists a $k\in (0,\infty)$ such that $h_n=O(n^{-k})$. Note that $k<\frac{1}{d}-\frac{1}{l+\eta}$ is necessary then.
%\item Note, that for large $Q$, assumption (\ref{eq:b05}) is less restrictive. In return, the moment constraints on $U_t$ in \textbf{(U)} are more restrictive then.
%\item While (\ref{eq:b01}) and (\ref{eq:b02}) are in particular needed for Lemma \ref{Lemma2}, (\ref{eq:b03}) and (\ref{eq:b04}) are needed to proof Lemma \ref{Lemma:Teil mit m}, and (\ref{eq:b05}) is needed for the proof of Lemma \ref{Lemma:Teil mit U}.
%\end{enumerate}
\end{Remark}

%\begin{Remark}
%\textcolor{red}{Examples that fulfill assumption \textbf{(G)} and/or literature (p. 8-9 in diss)?}
%\end{Remark}

%%%%%%%%%%%%%%%%%%%%%%
\section{Asymptotic results} \label{asympt}
%%%%%%%%%%%%%%%%%%%%%%

To derive the asymptotic distribution of test statistics built from  the sequential marked empirical process $\hat T_n$ defined in (\ref{hat-Tn}), we apply the following expansion, which uses $Y_i=m(\bm{X}_i)+U_i$ for all $i=1,\dots,n$ under the null hypothesis, 
\begin{eqnarray*}
\hat T_n(s,\bm{z}) &=& A_{n2}(s,\bm{z})+A_{n1}(s,\bm{z})
\end{eqnarray*}
with 
\begin{align}\label{A_n1}
A_{n1}(s,\bm{z}):=&\frac{1}{\sqrt{n}}\sum\limits_{i=1}^{\lf ns \rf} (m(\bm{X}_i)-\hat{m}_n(\bm{X}_i))\om_n(\bm{X}_i)I\{\bm{X}_i\le \bm{z}\}\\
A_{n2}(s,\bm{z}):=&\frac{1}{\sqrt{n}}\sum\limits_{i=1}^{\lf ns \rf}U_i\om_n(\bm{X}_i)I\{\bm{X}_i\le \bm{z}\}.\label{A_n2}
\end{align}
Lemma \ref{Lemma1} in the appendix shows that $A_{n2}(s,\bm{z})=T_n(s,\bm{z})+o_P(1)$ uniformly in $s\in [0,1]$ and $\bm{z}\in\R^d$ with the process
\begin{equation}\label{Tn}
T_n(s,\bm{z})=\frac{1}{\sqrt{n}}\sum\limits_{i=1}^{\lf ns \rf}U_iI\{\bm{X}_i\le \bm{z}\},\quad s\in[0,1], \bm{z}\in\R^d.
\end{equation}
Further, Lemma \ref{Lemma2} in the appendix shows that 
\begin{align*}
A_{n1}(s,\bm{z})
&=s\sqrt{n}\int_{\R^d}(m(\bm{x})-\hat{m}_n(\bm{x}))\om_n(\bm{x})I\{\bm{x}\le \bm{z}\}f(\bm{x})d\bm{x}+o_P(1)
\end{align*}
holds uniformly in $s\in [0,1]$ and $\bm{z}\in\R^d$. Inserting the definition of $\hat m_n$ from (\ref{NadWat}) one obtains one term of the form 
$$\frac{s}{\sqrt{n}}\sum\limits_{i=1}^{n}\int_{(-\bm{\infty},\bm{z}]}\left(m(\bm{y})-m(\bm{X}_i)\right)K_{h_n}(\bm{y}-\bm{X}_i)\om_n(\bm{y})\frac{f(\bm{y})}{\hat{f}_n(\bm{y})}d\bm{y},$$
which is negligible by Lemma \ref{Lemma:Teil mit m} and one term of the form 
$$-\frac{s}{\sqrt{n}}\sum\limits_{i=1}^{n}U_i\int_{(-\bm{\infty},\bm{z}]}K_{h_n}(\bm{y}-\bm{X}_i)\om_n(\bm{y})\frac{f(\bm{y})}{\hat{f}_n(\bm{y})}d\bm{y}$$
which can further be expanded applying Lemmata \ref{Lemma:Teil mit U} and \ref{Lemma1} such that one obtains 
$$A_{n1}(s,\bm{z}) =-sT_n(1,\bm{z})+o_P(1)$$
uniformly in $s\in[0,1]$ and $\bm{z}\in\R^d$. From this the expansion given in the first part of Theorem \ref{decomposition} below follows. In the second part of the theorem weak convergence of $T_n$ from (\ref{Tn}) is stated. 

\begin{Theorem} \label{decomposition}
{\bf (i)} Suppose that \textbf{(G)}, \textbf{(U)}, \textbf{(M)}, \textbf{(J)}, \textbf{(F1)}, \textbf{(F2)}, \textbf{(K)}, \textbf{(B1)} and \textbf{(B2)} are satisfied. Then under $H_0$ 
\begin{align*}
\hat{T}_n(s,\bm{z})=T_n(s,\bm{z})-sT_n(1,\bm{z})+o_P(1),
\end{align*}
holds uniformly in $s\in[0,1]$ and $\bm{z}\in\R^d$.

{\bf (ii)}
Suppose that the assumptions \textbf{(G)} and \textbf{(U)} are satisfied. Then under $H_0$ the process $T_n$ converges weakly in $\ell^{\infty}([0,1]\times\R^d)$ to a centered Gaussian process $G$ with
\begin{align*}
\Cov\big(G(s_1,\bm{z}_1),G(s_2,\bm{z}_2)\big)=(s_1 \mi s_2)\Sigma(\bm{z}_1 \mi \bm{z}_2)
\end{align*}
and $\Sigma:\R^d\to\R, \bm{x}\mapsto\int_{(-\bm{\infty},\bm{x}]}\si^2(\bm{u})f(\bm{u})d\bm{u}$.
\end{Theorem}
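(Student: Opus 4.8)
The statement splits into the algebraic expansion \textbf{(i)} and the functional central limit theorem \textbf{(ii)}, and I would treat them separately.

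\emph{Part (i).} Here the plan is only to assemble the chain of reductions indicated before the theorem. I would write $\hat T_n=A_{n1}+A_{n2}$ with $A_{n1},A_{n2}$ as in (\ref{A_n1})--(\ref{A_n2}); apply Lemma \ref{Lemma1} to get $A_{n2}(s,\bm{z})=T_n(s,\bm{z})+o_P(1)$ uniformly; apply Lemma \ref{Lemma2} to linearise $A_{n1}(s,\bm{z})=s\sqrt{n}\int_{\R^d}(m(\bm{x})-\hat m_n(\bm{x}))\om_n(\bm{x})I\{\bm{x}\le\bm{z}\}f(\bm{x})\,d\bm{x}+o_P(1)$; then substitute the Nadaraya--Watson form (\ref{NadWat}) and split the numerator via $Y_j=m(\bm{X}_j)+U_j$ (valid under $H_0$) into the $m$-part, negligible by Lemma \ref{Lemma:Teil mit m}, and the $U$-part, which Lemmata \ref{Lemma:Teil mit U} and \ref{Lemma1} identify with $-sT_n(1,\bm{z})+o_P(1)$ uniformly. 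Adding the two contributions yields the asserted expansion; no new estimate is produced here, everything being carried by the appendix lemmata.

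\emph{Part (ii), finite-dimensional distributions.} The structural fact I would exploit is that, for each fixed $\bm{z}$, the summands $\xi_t(\bm{z}):=U_tI\{\bm{X}_t\le\bm{z}\}$ are martingale differences with respect to $(\cF^{t+1})_{t\in\Z}$: $\bm{X}_t$ and $U_t$ are $\cF^{t+1}$-measurable, and $E[\xi_t(\bm{z})\mid\cF^t]=I\{\bm{X}_t\le\bm{z}\}E[U_t\mid\cF^t]=0$ by \textbf{(U)}, since $\bm{X}_t\in\cF^t$. Hence $T_n(\cdot,\bm{z})$ is, up to normalisation, a martingale in the time index, and this is what forces the simple limiting covariance. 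I would fix $(s_\ell,\bm{z}_\ell)_{\ell\le k}$, use the Cram\'er--Wold device, recognise $\sum_\ell\lambda_\ell T_n(s_\ell,\bm{z}_\ell)=n^{-1/2}\sum_{t=1}^n\sum_\ell\lambda_\ell I\{t\le\lf ns_\ell\rf\}\xi_t(\bm{z}_\ell)$ as a normalised martingale-difference array, and apply a martingale central limit theorem for triangular arrays. Its predictable quadratic variation is a finite combination of $n^{-1}\sum_{t\le\lf ns_a\rf\mi\lf ns_b\rf}I\{\bm{X}_t\le\bm{z}_a\mi\bm{z}_b\}E[U_t^2\mid\cF^t]$, which by ergodicity of $(Y_t,\bm{X}_t)_{t\in\Z}$ (a consequence of the mixing in \textbf{(G)}) tends a.s.\ to $(s_a\mi s_b)E[\si^2(\bm{X}_1)I\{\bm{X}_1\le\bm{z}_a\mi\bm{z}_b\}]=(s_a\mi s_b)\Sigma(\bm{z}_a\mi\bm{z}_b)$, using $\bm{X}_1\in\cF^1$ and $E[U_1^2\mid\bm{X}_1]=\si^2(\bm{X}_1)$; the conditional Lindeberg (or Lyapunov) condition follows from the higher moment bound in \textbf{(U)}. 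This gives joint asymptotic normality with precisely the covariance of $G$.

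\emph{Part (ii), tightness.} The substantial step --- which I expect to be the main obstacle --- is asymptotic tightness of $T_n$ in $\ell^\infty([0,1]\times\R^d)$. My plan has three ingredients. First, compactify the $\bm{z}$-axis: using $\int\bar{c}(\bm{u})\,dF(\bm{u})\le M<\infty$ from \textbf{(U)}, the part of the process coming from $\bm{z}$ with a component beyond a large threshold is uniformly small in $L^2$, hence uniformly negligible in probability by a maximal inequality. Second, on a large box, observe that the increment $T_n(s_2,\bm{z}_2)-T_n(s_1,\bm{z}_1)$ is again a sum of martingale differences $U_t(I\{\bm{X}_t\le\bm{z}_2\}-I\{\bm{X}_t\le\bm{z}_1\})$ over $\lf ns_1\rf<t\le\lf ns_2\rf$ together with a frozen-$s$ term, so that a Bernstein--Freedman-type martingale inequality --- after truncating the $U_t$, the large part being controlled by the high moments in \textbf{(U)} --- provides sub-exponential tail bounds for increments measured in the $L^2$-type pseudometric $\rho((s_1,\bm{z}_1),(s_2,\bm{z}_2))^2=|s_2-s_1|+E[\si^2(\bm{X}_1)\,|I\{\bm{X}_1\le\bm{z}_2\}-I\{\bm{X}_1\le\bm{z}_1\}|]$. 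Third, since the indicator class $\{I\{\cdot\le\bm{z}\}:\bm{z}\in\R^d\}$ has polynomially growing $L^2(F)$-bracketing numbers and the $s$-variable adds only one dimension, $([0,1]\times\R^d,\rho)$ is totally bounded with bracketing entropy integrable at $0$, so a chaining argument over the Bernstein--Freedman bounds yields stochastic $\rho$-equicontinuity of $T_n$. Together with the finite-dimensional convergence this establishes weak convergence of $T_n$ to a $\rho$-uniformly continuous centered Gaussian process, which by uniqueness of the limit is the process $G$. The delicate points I anticipate --- presumably what the abstract calls ``sophisticated arguments for sequential empirical processes of weakly dependent variables'' --- are making the maximal/chaining estimate uniform over the non-compact index set $[0,1]\times\R^d$ and handling the truncation so that the martingale-difference and weak-dependence ingredients remain compatible throughout.
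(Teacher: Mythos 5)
Your part (i) coincides with the paper's proof: it is exactly the assembly of Lemmata \ref{Lemma2}, \ref{Lemma1}, \ref{Lemma:Teil mit m} and \ref{Lemma:Teil mit U} described at the start of section \ref{asympt}, with no independent content beyond those lemmata.

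Your part (ii) is correct in outline but takes a genuinely different route. For the finite-dimensional distributions the paper uses the Cram\'er--Wold device together with Corollary 1 of \cite{Rio199535}, a CLT for strongly mixing triangular arrays, and verifies Rio's quantile/mixing-rate conditions; you instead exploit that $U_tI\{\bm{X}_t\le\bm{z}\}$ is a martingale difference array with respect to $(\cF^{t+1})_t$ (valid, since $\bm{X}_t$ is $\cF^t$-measurable and $E[U_t\mid\cF^t]=0$ by \textbf{(U)}) and invoke a martingale CLT, identifying the predictable quadratic variation by the ergodic theorem. This is cleaner in that the vanishing of cross-covariances is structural rather than something to be absorbed into a mixing bound, and it needs only ergodicity rather than Rio's quantitative rate; note, though, that the quadratic variation involves $E[U_t^2\mid\cF^t]$ while \textbf{(U)} specifies $E[U_t^2\mid\bm{X}_t]$, so you must pass through the tower property to land on $\sigma^2(\bm{X}_1)$, as you indicate. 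For tightness the paper does not chain by hand: it verifies the bracketing and moment conditions \textbf{(A1)}--\textbf{(A3)} of Lemma \ref{Mohr2017} (Corollary 2.7 of \cite{Mohr2017}), constructing explicit brackets from the function $H$ built out of $\bar{c}f$, which is also where the compactification over $\bm{z}$ and the requirement $Q>d(2+\gamma)$ enter. Your plan of a direct Freedman-type chaining argument is essentially a from-scratch replacement of that citation, and two points in it would need care that the packaged result spares the paper: (a) Freedman's inequality bounds tails in terms of the \emph{random} predictable quadratic variation, so chaining over the indicator class requires a separate uniform law of large numbers controlling $n^{-1}\sum_t(I\{\bm{X}_t\le\bm{z}_2\}-I\{\bm{X}_t\le\bm{z}_1\})^2E[U_t^2\mid\cF^t]$ over all brackets; (b) truncating $U_t$ destroys the conditional centering, so the truncated summands must be re-centered by $E[U_tI\{|U_t|\le\tau_n\}\mid\cF^t]$ and that correction shown negligible. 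Both are standard but are precisely the ``sophisticated arguments'' the paper outsources to \cite{Mohr2017}.
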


The proof of the first part follows from the considerations above applying Lemmata \ref{Lemma2}--\ref{Lemma:Teil mit U} in the appendix, while the proof of the second part is given in section \ref{Appendix-proof} of the appendix. The proof of the second part  in particular makes use of a recent result on weak convergence of sequential empirical processes indexed in function classes that can be applied for strongly mixing sequences, see \cite{Mohr2017}. Note that \cite{Koul1999204} show a weak convergence result applicable to the non-sequential process $\{T_n(1,z):z\in\R\}$ under less restrictive assumptions on the dependence structure and moments (see Lemma 3.1 in aforementioned reference). From Theorem \ref{decomposition} and the continuous mapping theorem one directly obtains the limit distribution of $\hat T_n$.

\begin{Corollary}\label{cor:T_dach}
Suppose that the assumptions of Theorem \ref{decomposition}(i) are satisfied. Then under $H_0$ the process $\hat T_n$ converges weakly in 
 $\ell^{\infty}([0,1]\times\R^d)$ to a centered Gaussian process $G_0$ with
\begin{align*}
\Cov\big(G_0(s_1,\bm{z}_1),G_0(s_2,\bm{z}_2)\big)=(s_1 \mi s_2-s_1s_2)\Sigma(\bm{z}_1 \mi \bm{z}_2)
\end{align*}
and $\Sigma$ as in Theorem \ref{decomposition}(ii). 
\end{Corollary}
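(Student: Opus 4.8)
The plan is to derive the corollary from the two parts of Theorem \ref{decomposition} by a continuous mapping argument combined with a Slutsky-type lemma. First I would introduce the map $\Phi\colon\ell^{\infty}([0,1]\times\R^d)\to\ell^{\infty}([0,1]\times\R^d)$ defined by $(\Phi x)(s,\bm z)=x(s,\bm z)-s\,x(1,\bm z)$. This map is linear and satisfies $\|\Phi x\|_{\infty}\le 2\|x\|_{\infty}$, hence it is Lipschitz continuous on all of $\ell^{\infty}([0,1]\times\R^d)$. Since the assumptions of Theorem \ref{decomposition}(i) in particular include \textbf{(G)} and \textbf{(U)}, Theorem \ref{decomposition}(ii) gives $T_n\conv G$ in $\ell^{\infty}([0,1]\times\R^d)$ under $H_0$, and the continuous mapping theorem for weak convergence in the sense of Hoffmann-J\o rgensen yields $\Phi(T_n)\conv\Phi(G)=:G_0$, where $G_0(s,\bm z)=G(s,\bm z)-sG(1,\bm z)$.

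Next I would invoke Theorem \ref{decomposition}(i), which gives the representation $\hat T_n(s,\bm z)=\Phi(T_n)(s,\bm z)+R_n(s,\bm z)$ with $\sup_{s\in[0,1],\,\bm z\in\R^d}|R_n(s,\bm z)|=o_P(1)$. Because the remainder vanishes in probability in the sup-norm, it may be absorbed without affecting the weak limit (the standard asymptotic-equivalence lemma for $\ell^{\infty}$-valued random elements), so that $\hat T_n\conv G_0$ in $\ell^{\infty}([0,1]\times\R^d)$.

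Finally I would identify the law of $G_0$. As the image of the centered Gaussian process $G$ under the continuous linear map $\Phi$, the process $G_0$ is again centered Gaussian. Its covariance follows by bilinearity: expanding $\Cov\big(G(s_1,\bm z_1)-s_1G(1,\bm z_1),\,G(s_2,\bm z_2)-s_2G(1,\bm z_2)\big)$ into four terms, inserting the covariance of $G$ from Theorem \ref{decomposition}(ii), and using $s_i\mi 1=s_i$, the four contributions add up to $\big[(s_1\mi s_2)-s_2s_1-s_1s_2+s_1s_2\big]\Sigma(\bm z_1\mi\bm z_2)=(s_1\mi s_2-s_1s_2)\Sigma(\bm z_1\mi\bm z_2)$, which is exactly the claimed covariance structure.

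There is no serious obstacle in this argument; the only points that need a little care are that $\Phi$ is genuinely continuous on the non-separable space $\ell^{\infty}([0,1]\times\R^d)$ — which Lipschitz continuity makes immediate — and that the $o_P(1)$ remainder in Theorem \ref{decomposition}(i) is uniform in $(s,\bm z)$, which is precisely how that theorem is stated.
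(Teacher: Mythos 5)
Your argument is correct and is exactly the paper's route: the paper derives the corollary in one line from Theorem \ref{decomposition} and the continuous mapping theorem, which is precisely your map $\Phi(x)(s,\bm z)=x(s,\bm z)-s\,x(1,\bm z)$ applied to $T_n\conv G$ together with the uniform $o_P(1)$ remainder from part (i). Your covariance computation and the remarks on continuity of $\Phi$ on the non-separable space $\ell^{\infty}([0,1]\times\R^d)$ are also correct.
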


Continuous functionals of the process $\hat{T}_n$ can be used as test statistics for $H_0$. We consider the following Kolmogorov-Smirnov and Cram\'er-von Mises type statistics and combinations of both, 
\begin{eqnarray*}
T_{n1}&=& \sup\limits_{s\in[0,1],\bm{z}\in\R^d}\left|\hat{T}_n(s,\bm{z})\right|,\quad
T_{n2}\;=\;\sup\limits_{\bm{z}\in\R^d}\int_{0}^1\left|\hat{T}_n(s,\bm{z})\right|^2ds,\\
T_{n3}&=&\sup\limits_{s\in[0,1]}\int_{\R^d}\left|\hat{T}_n(s,\bm{z})\right|^2v(\bm{z})d\bm{z},\quad 
T_{n4}\;=\;\int_{0}^1\int_{\R^d}\left|\hat{T}_n(s,\bm{z})\right|^2v(\bm{z})d\bm{z}ds,
\end{eqnarray*}
where $v:\R^d\to\R$ is some integrable weighting function. Applying Corollary \ref{cor:T_dach} and the continuous mapping theorem gives convergence in distribution of those test statistics. One can obtain distribution-free tests in the case of dimension $d=1$ as follows. Denote by $\{K_0(s,t):s\in[0,1], t\in \R\}$ a Kiefer-M\"uller process, i.e.\ a centered Gaussian process with covariance function $\Cov(K_0(s_1,t_1),K_0(s_2,t_2))=(s_1\mi s_2-s_1s_2)(t_1\mi t_2)$. Then $K_0(\cdot,\Sigma(\cdot))$ has the same distribution as $G_0(\cdot,\cdot)$.  Let further $\sigma (\cdot)$ be continuous and consider the consistent estimator 
$\hat{c}_n=n^{-1}\sum_{i=1}^{n}(Y_i-\hat{m}_n(X_i))^2\om_n(X_i)$
for $c=\int \si^2(u)f(u)du$. Applying a scaling property of the process $K_0$ in its second component and substitution in the integrals it is easy to derive convergence in distribution as follows, 
\begin{eqnarray*}
\frac{T_{n1}}{\hat c_n^{1/2}} &\underset{n\to\infty}{\overset{\cD}{\to}}&\sup\limits_{s\in[0,1],t\in[0,1]}\left|K_0(s,t)\right|,\quad
\frac{T_{n2}}{\hat c_n} \;\underset{n\to\infty}{\overset{\cD}{\to}}\; \sup\limits_{t\in[0,1]}\int_{0}^1\left|K_0(s,t)\right|^2ds,\\
\frac{T_{n3}}{\hat c_n^2} &\underset{n\to\infty}{\overset{\cD}{\to}}& \sup\limits_{s\in[0,1]}\int_0^1\left|K_0(s,t)\right|^2dt,\quad
\frac{T_{n4}}{\hat c_n^2} \;\underset{n\to\infty}{\overset{\cD}{\to}}\; \int_{0}^1\int_0^1\left|K_0(s,t)\right|^2dtds.
\end{eqnarray*}
For the latter two tests however the unknown weight function $v=\sigma^2 f$ needs to be chosen to obtain the limit as stated above. To obtain feasible asymptotically distribution-free tests, $T_{n3}$ and $T_{n4}$ should be replaced by 
\begin{eqnarray*}
\tilde{T}_{n3}&=& \sup\limits_{s\in[0,1]}\frac{1}{n}\sum\limits_{k=1}^{n}\left|\hat{T}_n(s,X_k)\right|^2\hat{\sigma}_n^2(X_k),\quad
\tilde{T}_{n4}\;=\;\int_0^1\frac{1}{n}\sum\limits_{k=1}^{n}\left|\hat{T}_n(s,X_k)\right|^2\hat{\sigma}_n^2(X_k)ds
\end{eqnarray*}
applying a nonparametric estimator for the variance function such as
$$\hat{\sigma}_n^2(x)=\dfrac{\sum_{j=1}^{n}K\Big(\frac{x-X_{j}}{h_n}\Big)(Y_{j}-\hat{m}_n(x))^2}{\sum_{j=1}^{n}K\Big(\frac{x-X_{j}}{h_n}\Big)}.$$
To conclude the section we will have a closer look at the alternative of one change point. For simplicity reasons we will only consider the test based on $T_{n1}$. To model the alternative we assume a triangular array
\begin{align*} %\label{eq:model_consistency_02}
Y_{n,t}=m_{n,t}(\bm{X}_{n,t})+U_{n,t}, \ t=1,\dots,n, 
\end{align*}
and validity of the alternative of one change point, i.e.\
\begin{align}\label{alternative}
H_{1}: \exists s_0\in (0,1): m_{n,t}(\cdot)=\begin{cases}m_{(1)}(\cdot), & t=1,\dots, \lf ns_0\rf \\ m_{(2)}(\cdot), & t=\lf ns_0\rf+1,\dots,n\end{cases} 
%\label{eq:fixed alternative}
\end{align}
for some not further specified functions $m_{(1)}\not\equiv m_{(2)}$. Let $f_{n,t}$ denote the density of $\bm{X}_{n,t}$ and assume that for all $s\in(0,1]$ there exists a function $\bar{f}^{(s)}:\R^d\to\R$ such that
\begin{equation}\label{fs}
\lim\limits_{n\to\infty}\frac{1}{n}\sum\limits_{t=1}^{\lf ns \rf}f_{n,t}(\bm{x})=\bar{f}^{(s)}(\bm{x}), \ \forall \ \bm{x}\in\R^d.
\end{equation}
Under some regularity conditions it can be shown by applying \citeauthor{Kristensen20091433}'s (\citeyear{Kristensen20091433}) results that 
\begin{equation}\label{mn}
\sup\limits_{\bm{x}\in\bm{J}_n}\left|\hat{m}_n(\bm{x})-\bar{m}_n(\bm{x})\right|=o_P(1),
\end{equation}
where $\bar{m}_n(\bm{x})
=\sum_{i=1}^{n}f_{n,i}(\bm{x})m_{n,i}(\bm{x})/\sum_{i=1}^{n}f_{n,i}(\bm{x})$
converges to the mixture
\begin{equation}\label{mn-limit}
m_{(1)}(\bm{x})\frac{\bar{f}^{(s_0)}(\bm{x})}{\bar{f}^{(1)}(\bm{x})}+\left(1-\frac{\bar{f}^{(s_0)}(\bm{x})}{\bar{f}^{(1)}(\bm{x})}\right)m_{(2)}(\bm{x})
\end{equation}
of the regression functions before and after the change. Now for fixed $\bm{z}\in\R^d$ and $s\in(0,1)$ with $s\le s_0$, it holds that
\begin{align*}%\label{eq:consistency_decomp}
\hat{T}_n(s,\bm{z})=\sqrt{n} \Delta(s,\bm{z})+o_P(\sqrt{n}),
\end{align*}
where 
\[\Delta(s,\bm{z})=\int\limits_{(-\bm{\infty},\bm{z}]}(m_{(1)}(\bm{u})-m_{(2)}(\bm{u}))\left(1-\frac{\bar{f}^{(s_0)}(\bm{u})}{\bar{f}^{(1)}(\bm{u})}\right)\bar{f}^{(s)}(\bm{u})d\bm{u}.\] 
As under $H_1$ this integral is non-zero for $s=s_0$ and some $\bm{z}$, convergence of $T_{n1}$ to infinity in probability and thus  consistency of the test can be deduced.

\begin{Remark} Consider the non-marked CUSUM process $\hat T_n(s,\bm{\infty})$ which is analogous to \citeauthor{Su2008347}'s (\citeyear{Su2008347}) procedure.  Considerations as above for the fixed alternative $H_1$ of one change point in $\lf ns_0\rf$ leads for $s\leq s_0$ to 
\begin{eqnarray*}
\Delta(s,\bm{\infty}) &=&\int(m_{(1)}(\bm{u})-m_{(2)}(\bm{u}))\left(1-\frac{\bar{f}^{(s_0)}(\bm{u})}{\bar{f}^{(1)}(\bm{u})}\right)\bar{f}^{(s)}(\bm{u})d\bm{u}\\
&=& s(1-s_0)\int (m_{(1)}(\bm{u})-m_{(2)}(\bm{u}))f(\bm{u})d\bm{u},
\end{eqnarray*}
where the last equality holds in case of a stationary covariate process. The integral  can be zero even if $m_{(1)}\neq m_{(2)}$. Then  tests based on the CUSUM process will not be consistent, while tests based on the marked CUSUM process are. We will consider some examples in section \ref{simus}. 
\end{Remark}

%%%%%%%%%%%%%%%%%%%%%%
\section{A bootstrap procedure and the case of non-stationary variances}\label{bootstrap}
%%%%%%%%%%%%%%%%%%%%%%

As alternative to the asymptotic test considered in section 3, in this section we will suggest a wild bootstrap approach. This resampling procedure can in particular be applied in the case of multivariate covariates, where the critical values for the asymptotic tests based on Corollary \ref{cor:T_dach} have to be estimated. Moreover, the bootstrap approach can be applied to obtain a test that detects changes in the conditional mean function, even when the conditional variance function is not stable. As desired, the test does not react sensitive to the unstable variance.
In contrast to the bootstrap approach, the limiting distribution from section 3 cannot be applied in the case of changes in the variance. 

We consider the model 
\begin{align*}
Y_{n,t}=m_{n,t}(\bm{X}_{n,t})+U_{n,t}, \ t=1,\dots,n, 
\end{align*}
with 
$E[U_{n,t}|\cF_n^{t}]=0$ and $E[U_{n,t}^2|\bm{X}_{n,t}]=\si^2_{n,t}(\bm{X}_{n,t})$  a.s.\
for some functions $\si^2_{n,t}:\R^d\to\R$ and $\cF_n^{t}:=\si(U_{n,j-1},\bm{X}_{n,j}:j\le t)$.  We assume $\bm{X}_{n,t}$ to be absolutely continuous with density function $f_{n,t}$.  The model considered in section \ref{sec-model} and the first part of section \ref{asympt} is the special case where $f_{n,t}(\cdot)=f(\cdot)$ and $\sigma^2_{n,t}(\cdot)=\sigma^2(\cdot)$ for all $t=1,\dots,n$ and for some $f,\si^2:\R^d\to\R$ not depending on $t$ and $n$. Both models allow for heteroscedasticity, but the more general model also allows for possible changes in $\si^2_{n,t}$, which should not effect the rejection probability of the test for 
\[H_0: m_{n,t}(\cdot)=m(\cdot), \ t=1,\dots,n,\]
(for some $m$ not depending on $t$ and $n$). We again consider the procedure
$$\hat{T}_n(s,\bm{z})=\frac{1}{\sqrt{n}}\sum_{i=1}^{\lf ns \rf}\hat{U}_{n,i}\om_n(\bm{X}_{n,i})I\{\bm{X}_{n,i}\le \bm{z}\}$$
with residuals $\hat{U}_{n,i}=Y_{n,i}-\hat{m}_n(\bm{X}_{n,i})$. Here $\hat{m}_n$ is defined as in \eqref{eq:NadWat}, but replacing $(\bm{X}_j,Y_j)$ by $(\bm{X}_{n,j}, Y_{n,j})$, $j=1,\dots,n$.
% against the changepoint alternative
%
%\[H_1: \exists s_0\in(0,1): m_{n,t}(\cdot)=\begin{cases}m_{(1)}(\cdot), \ t=1,\dots,\lf ns_0 \rf \\ m_{(2)}(\cdot), \ t=\lf ns_0 \rf+1,\dots,n\end{cases},\]
%
%for functions $m_{(1)},m_{(2)}:\R^d\to\R$ with $m_{(1)}\not\equiv m_{(2)}$. 

First define the wild bootstrap innovations as $U_{n,t}^*=\hat{U}_{n,t}\eta_t$,
where $\{\eta_t\}$ are i.i.d.\ random variables, independent of the original sample with $E[\eta_0]=0$, $E[\eta_0^2]=1$ and $E[\eta_0^4]<\infty$. Then the bootstrap data fulfilling the null hypothesis are generated by
\[Y^*_{n,t}=\hat{m}_n(\bm{X}_{n,t})+U_{n,t}^*.\]
Note that if the original data follow an autoregression model, say $d=1$ and $X_{n,t}=Y_{n,t-1}$, by the above choice the resulting bootstrap data does not follow the same structure. As was pointed out by \cite{Kreiss20123} this bootstrap data generation is still a reasonable choice in particular if the dependence structure of the underlying process does not show up in the asymptotic distribution. Another possibility might be a dependent wild bootstrap as suggested in \cite{Shao2010218}.

 The bootstrap residuals are defined as $\hat{U}^*_{n,t}=Y^*_{n,t}-\hat{m}^*_n(\bm{X}_{n,t})$,
where $\hat{m}^*_n$ is defined as $\hat m_n$ in \eqref{eq:NadWat}, but replacing $(\bm{X}_j,Y_j)$ by $(\bm{X}_{n,j}, Y_{n,j}^*)$, $j=1,\dots,n$. 
The bootstrap process is defined as
$$\hat{T}^*_n(s,\bm{z})=\frac{1}{\sqrt{n}}\sum_{i=1}^{\lf ns \rf}\hat{U}^*_{n,i}\om_n(\bm{X}_{n,i})I\{\bm{X}_{n,i}\le \bm{z}\}.$$
Bootstrap versions $T_{n\ell}^*$, $\ell=1,\dots,4$, are defined analogous to the test statistics $T_{n\ell}$, $\ell=1,\dots,4$, but based on $\hat T_n^*$ instead of $\hat T_n$. Then $H_0$ is rejected if $T_{n\ell}$ is larger than the ($1-\alpha$)-quantile of the conditional distribution of $T_{n\ell}^*$, given the original data. 

To motivate that we obtain a valid procedure (which holds the level asymptotically and is consistent) even in the case of changing variances, we will consider the limiting process $G_0$ of the original process $\hat{T}_n$ and the conditional limiting process $G_0^*$ of the bootstrap version $\hat{T}^*_n$ in subsections \ref{section-changing variances} and \ref{section-bootstrap} below. We will see that the processes $G_0$ and $G_0^*$ coincide under the null hypothesis. Note that some steps of the derivation are explained heuristically, whereas rigorously deriving the weak convergence would require a limit theorem for sequential empirical processes indexed in function classes for weakly dependent non-stationary data. Such a result is, to the best of our knowledge, not yet available in the literature and thus a rigorous proof is beyond the scope of the paper (see \cite{Mohr2017} for a related limit theorem that requires stationarity).

\subsection{Asymptotics for non-homogeneous variances}\label{section-changing variances}

Heuristically under $H_0$ one can proceed as in the proof of the first part of Theorem \ref{decomposition} in the beginning of section \ref{asympt}. Again one has the expansion $\hat T_n(s,\bm{z}) = A_{n2}(s,\bm{z})+A_{n1}(s,\bm{z})$, however, similar to Lemma \ref{Lemma2} in the appendix one will now obtain
\begin{align*}
A_{n1}(s,\bm{z}):=&\frac{1}{\sqrt{n}}\sum\limits_{i=1}^{\lf ns \rf} (m(\bm{X}_{n,i})-\hat{m}_n(\bm{X}_{n,i}))\om_n(\bm{X}_{n,i})I\{\bm{X}_{n,i}\le \bm{z}\}\\
%&\overset{(\ast)}{=}\sqrt{n}\int_{(-\bm{\infty},\bm{z}]}(m(\bm{x})-\hat{m}_n(\bm{x}))
%\om_n(\bm{x})\frac{1}{n}\sum\limits_{i=1}^{\lf ns \rf} f_{n,i}(\bm{x})d\bm{x}+o_P(1)\\
=&\sqrt{n}\int_{(-\bm{\infty},\bm{z}]}(m(\bm{x})-\hat{m}_n(\bm{x}))\om_n(\bm{x})\bar{f}^{(s)}(\bm{x})d\bm{x}+o_P(1),
\end{align*}
 uniformly in $s\in[0,1]$ and $\bm{z}\in\R^d$ under suitable regularity conditions and under the assumption that the limit $\bar{f}^{(s)}$ as in (\ref{fs}) exists. Inserting the definition of $\hat m_n$ analogously to Lemmata \ref{Lemma:Teil mit m} and \ref{Lemma:Teil mit U} this will result in one negligible term and one term of the form
\begin{eqnarray*}
&&-\frac{1}{\sqrt{n}}\sum\limits_{j=1}^{n}U_{n,j}\int_{(-\bm{\infty},\bm{z}]}K_{h_n}(\bm{x}-\bm{X}_{n,j})\om_n(\bm{x})\frac{\bar{f}^{(s)}(\bm{x})}{\bar{f}^{(1)}(\bm{x})}d\bm{x}\\
&&= {}-\frac{1}{\sqrt{n}}\sum_{j=1}^{n}U_{n,j}\frac{\bar{f}^{(s)}(\bm{X}_{n,j})}{\bar{f}^{(1)}(\bm{X}_{n,j})}I\{\bm{X}_{n,j}\le \bm{z}\} +o_P(1).
\end{eqnarray*}
Further, Lemma \ref{Lemma1} will stay valid in analogous form. Thus, one obtains the exansion 
\[\hat{T}_n(s,\bm{z})=\Gamma_n(s,1,\bm{z})-\Gamma_n(1,s,\bm{z})+o_P(1)\]
with the process 
\begin{align*}
\Gamma_n(s,t,\bm{z}):=\frac{1}{\sqrt{n}}\sum\limits_{i=1}^{\lf ns \rf}U_{n,i}\bar{g}^{(t)}(\bm{X}_{n,i})I\{\bm{X}_{n,i}\le\bm{z}\}:s,t\in[0,1],\bm{z}\in\R^d,
\end{align*}
where $\bar{g}^{(t)}:=\bar{f}^{(t)}/\bar{f}^{(1)}$.
Now assume that the limit $\bar{h}^{(s)}(\cdot):=\lim_{n\to\infty}n^{-1}\sum_{i=1}^{\lf ns \rf}\si_{n,i}^2(\cdot)f_{n,i}(\cdot)$ exists for all $s\in(0,1]$ and that the process $\Gamma_n$ converges weakly to a centered Gaussian process $\Gamma$
(a proof would require a weak convergence result for sequential empirical processes indexed in general function classes and with a weakly dependent and non-stationary underlying triangular array process). 
The limiting covariance is
$$
E[\Gamma_n(s_1,t_1,\bm{z}_1)\Gamma_n(s_2,t_2,\bm{z}_2)]=\int_{(-\bm{\infty},\bm{z}_1\mi\bm{z}_2]}\bar{h}^{(s_1\mi s_2)}(\bm{u})\bar{g}^{(t_1)}(\bm{u})\bar{g}^{(t_2)}(\bm{u})d\bm{u}.
$$
Then with the continuous mapping theorem the weak convergence of $\hat{T}_n$ to a centered Gaussian process $\{G_0(s,\bm{z}):s\in[0,1],\bm{z}\in\R^d\}$ follows with covariances
\begin{align*}
&\Cov(G_0(s_1,\bm{z}_1),G_0(s_2,\bm{z}_2))\\
&=\hspace{-0.25cm}\int\limits_{(-\bm{\infty},\bm{z}_1\mi \bm{z}_2]}\hspace{-0.35cm}\left(\bar{h}^{(s_1\mi s_2)}(\bm{u})-\bar{h}^{(s_1)}(\bm{u})\bar{g}^{(s_2)}(\bm{u})-\bar{h}^{(s_2)}(\bm{u})\bar{g}^{(s_1)}(\bm{u})+\bar{h}^{(1)}(\bm{u})\bar{g}^{(s_1)}(\bm{u})\bar{g}^{(s_2)}(\bm{u})\right)\,d\bm{u}.
\end{align*}
Note that this is consistent with the stationary case as then $\bar{h}^{(s)}(\cdot)=s\si^2(\cdot)f(\cdot)$ and $\bar{g}^{(s)}(\cdot)=s$ and the same covariance function as in Corollary \ref{cor:T_dach} is obtained. 
 The convergence of the test statistics $T_{n\ell}$, $\ell=1,\dots,4$, in distribution follows again from the continuous mapping theorem. 

Under the change point alternative $H_1$ from (\ref{alternative}) with $m_{(1)}\not\equiv m_{(2)}$, analogous to the considerations in section \ref{asympt}  it holds that the test statistic $T_{n1}$ converges to infinity in probability.

\subsection{Derivations for the bootstrap process}\label{section-bootstrap}

Concerning the weak convergence of the bootstrap process $\hat T_n^*$, conditionally on the sample, we have again a look at the expansion in the beginning of section \ref{asympt} for the derivation of the first part of the proof of Theorem \ref{decomposition}. In what follows let $P^*$ denote the conditional probability and $E^*$ the conditional expectation, given the observations. Further let $Z_n=o_{P^*}(1)$ be short for $P^*(|Z_n|>\epsilon)=o_P(1)$ for all $\epsilon>0$.
Here we obtain 
$$\hat T_n(s,\bm{z}) = A_{n2}^*(s,\bm{z})+A_{n1}^*(s,\bm{z})$$
with 
$$A_{n2}^*(s,\bm{z}):=\frac{1}{\sqrt{n}}\sum\limits_{i=1}^{\lf ns \rf}U^*_{n,i}\om_n(\bm{X}_{n,i})I\{\bm{X}_{n,i}\le \bm{z}\}$$
and (similar to Lemma \ref{Lemma2} in the appendix)
\begin{align*}
A_{n1}^*(s,\bm{z}):=&\frac{1}{\sqrt{n}}\sum\limits_{i=1}^{\lf ns \rf} (\hat m_n(\bm{X}_{n,i})-\hat{m}_n^*(\bm{X}_{n,i}))\om_n(\bm{X}_{n,i})I\{\bm{X}_{n,i}\le \bm{z}\}\\
=&\sqrt{n}\int_{(-\bm{\infty},\bm{z}]}(\hat m_n(\bm{x})-\hat{m}_n^*(\bm{x}))\om_n(\bm{x})\bar{f}^{(s)}(\bm{x})d\bm{x}+o_{P^*}(1)
\end{align*}
with $\bar{f}^{(s)}$ as in (\ref{fs}). 
Inserting the definition of $\hat m_n^*$ this leads to a term (similar to Lemma \ref{Lemma:Teil mit m}) of the form
\[\frac{1}{\sqrt{n}}\sum\limits_{j=1}^{n}\int_{(-\bm{\infty},\bm{z}]}\left(\hat{m}_n(\bm{x})-\hat{m}_n(\bm{X}_{n,j})\right)K_{h_n}(\bm{x}-\bm{X}_{n,j})\om_n(\bm{x})\frac{\bar{f}^{(s)}(\bm{x})}{\bar{f}^{(1)}(\bm{x})}d\bm{x},\]
which is negligible, and a term (similar to Lemma \ref{Lemma:Teil mit U}) of the form 
\begin{align*}
&-\frac{1}{\sqrt{n}}\sum\limits_{j=1}^{n}U^*_{n,j}\int_{(-\bm{\infty},\bm{z}]}K_{h_n}(\bm{x}-\bm{X}_{n,j})\om_n(\bm{x})\frac{\bar{f}^{(s)}(\bm{x})}{\bar{f}^{(1)}(\bm{x})}d\bm{x}\\
&=-\frac{1}{\sqrt{n}}\sum\limits_{j=1}^{n}U^*_{n,j}\om_n(\bm{X}_{n,j})\frac{\bar{f}^{(s)}(\bm{X}_{n,j})}{\bar{f}^{(1)}(\bm{X}_{n,j})}I\{\bm{X}_{n,j}\le \bm{z}\}+o_{P^*}(1).
%\\
%&=-\frac{1}{\sqrt{n}}\sum\limits_{j=1}^{n}U^*_{n,j}\frac{\bar{f}^{(s)}(\bm{X}_{n,j})}{\bar{f}^{(1)}(\bm{X}_{n,j})}I\{\bm{X}_{n,j}\le \bm{z}\}+o_P(1).
\end{align*}
Thus one obtains (under suitable regularity conditions) the expansion 
\begin{align*}
\hat{T}^*_n(s,\bm{z})
&=\Gamma^*_n(s,1,\bm{z})-\Gamma^*_n(1,s,\bm{z})+o_{P^*}(1),
\end{align*}
where 
\[\Gamma^*_n(s,t,\bm{z}):=\frac{1}{\sqrt{n}}\sum\limits_{i=1}^{\lf ns \rf}U^*_{n,i}\om_n(\bm{X}_{n,i})\bar{g}^{(t)}(\bm{X}_{n,i})I\{\bm{X}_{n,i}\le\bm{z}\}, \ s,t\in[0,1],\bm{z}\in\R^d,\]
and $\bar{g}^{(t)}$ is defined as in section \ref{section-changing variances}. In what follows we will assume that the process $\Gamma^*_n$, conditionally on the sample, converges weakly to a centered Gaussian process, in probability. Then, by the continuous mapping theorem,  $\hat{T}^*_n$, conditionally converges weakly to a centered Gaussian process, say $G_0^*$. We will calculate the asymptotic variances in order to show that under $H_0$ those coincide with the covariances of $G_0$ as in section \ref{section-changing variances}.
%In what follows let $E^*$ denote the conditional expectation, given the observations. 
 First note that $E^*[U_{n,i}^*U_{n,j}^*]=\hat{U}_{n,i}^2I\{i=j\}$
almost surely. Under $H_0$ it holds that $\hat{U}_{n,t}=m(\bm{X}_{n,t})-\hat{m}_n(\bm{X}_{n,t})+U_{n,t}$ and $\hat{m}_n$ consistently estimates $m$, and thus
\begin{align*}
&E^*\left[\Gamma^*_n(s_1,t_1,\bm{z}_1)\Gamma^*_n(s_2,t_2,\bm{z}_2)\right]\notag\\
&=\frac{1}{n}\sum\limits_{i=1}^{\lf ns_1\rf \mi\lf ns_2\rf} \hat{U}_{n,i}^2\om_n(\bm{X}_{n,i})\bar{g}^{(t_1)}(\bm{X}_{n,i})\bar{g}^{(t_2)}(\bm{X}_{n,i})I\{\bm{X}_{n,i}\le \bm{z}_1\mi \bm{z}_2\}\notag\\
&=\frac{1}{n}\sum\limits_{i=1}^{\lf ns_1\rf \mi\lf ns_2\rf} U_{n,i}^2\om_n(\bm{X}_{n,i})\bar{g}^{(t_1)}(\bm{X}_{n,i})\bar{g}^{(t_2)}(\bm{X}_{n,i})I\{\bm{X}_{n,i}\le \bm{z}_1\mi \bm{z}_2\}+o_P(1)\\
&=E\left[\Gamma(s_1,t_1,\bm{z}_1)\Gamma(s_2,t_2,\bm{z}_2)\right]+o_P(1)
\end{align*}
under $H_0$, 
where $\Gamma$ is the limiting distribution of $\Gamma_n$ in section \ref{section-changing variances}. Thus, under $H_0$, $\hat{T}^*_n$ indeed (presumably) converges weakly to $G_0$   in probability, and thus the test statistic  $T_{n\ell}^*$ converges conditionally in distribution, to the same limits as  $T_{n\ell}$ (respectively for $\ell=1,\dots,4$). 

Under the alternative $H_1$ as  in (\ref{alternative}), $\hat{U}_{n,i}=m_{n,i}(\bm{X}_{n,i})-\hat{m}_n(\bm{X}_{n,i})+U_{n,i}$ and thus
it holds that
\begin{align*}
&E^*\left[\Gamma^*_n(s_1,t_1,\bm{z}_1)\Gamma^*_n(s_2,t_2,\bm{z}_2)\right]\notag\\
&=\frac{1}{n}\sum\limits_{i=1}^{\lf ns_1\rf \mi\lf ns_2\rf} \hat{U}_{n,i}^2\om_n(\bm{X}_{n,i})\bar{g}^{(t_1)}(\bm{X}_{n,i})\bar{g}^{(t_2)}(\bm{X}_{n,i})I\{\bm{X}_{n,i}\le \bm{z}_1\mi \bm{z}_2\}\notag\\
&=\frac{1}{n}\sum\limits_{i=1}^{\lf ns_1\rf \mi\lf ns_2\rf} U_{n,i}^2\om_n(\bm{X}_{n,i})\bar{g}^{(t_1)}(\bm{X}_{n,i})\bar{g}^{(t_2)}(\bm{X}_{n,i})I\{\bm{X}_{n,i}\le \bm{z}_1\mi \bm{z}_2\}\\
&{+r_{n1}}+r_{n2}
\end{align*}
for fixed $s_1,s_2,t_1,t_2\in[0,1]$ and $\bm{z}_1,\bm{z}_2\in\R^d$. The first term again converges in probability to $E\left[\Gamma(s_1,t_1,\bm{z}_1)\Gamma(s_2,t_2,\bm{z}_2)\right]$. It can further be shown that 
$$
r_{n1} =\frac{2}{n}\sum\limits_{i=1}^{\lf ns_1\rf \mi\lf ns_2\rf} \hspace{-0.4cm} U_{n,i}(m_{n,i}(\bm{X}_{n,i})-\hat{m}_n(\bm{X}_{n,i}))\om_n(\bm{X}_{n,i})\bar{g}^{(t_1)}(\bm{X}_{n,i})\bar{g}^{(t_2)}(\bm{X}_{n,i})I\{\bm{X}_{n,i}\le \bm{z}_1\mi \bm{z}_2\}$$
converges to zero in probability. 
However, 
\begin{align*}
r_{n2}=
&\frac{1}{n}\sum\limits_{i=1}^{\lf ns_1\rf \mi\lf ns_2\rf} (m_{n,i}(\bm{X}_{n,i})-\hat{m}_n(\bm{X}_{n,i}))^2\om_n(\bm{X}_{n,i})\bar{g}^{(t_1)}(\bm{X}_{n,i})\bar{g}^{(t_2)}(\bm{X}_{n,i})I\{\bm{X}_{n,i}\le \bm{z}_1\mi \bm{z}_2\}\\
%&=\frac{1}{n}\sum\limits_{i=1}^{\lf ns_1\rf \mi\lf ns_2\rf} (m_{n,i}(\bm{X}_{n,i})-\bar{m}_n(\bm{X}_{n,i}))^2\bar{g}^{(t_1)}(\bm{X}_{n,i})\bar{g}^{(t_2)}(\bm{X}_{n,i})I\{\bm{X}_{n,i}\le \bm{z}_1\mi \bm{z}_2\}\\
%&+\frac{1}{n}\sum\limits_{i=1}^{\lf ns_1\rf \mi\lf ns_2\rf} (\bar{m}_{n}(\bm{X}_{n,i})-\hat{m}_n(\bm{X}_{n,i}))^2\bar{g}^{(t_1)}(\bm{X}_{n,i})\bar{g}^{(t_2)}(\bm{X}_{n,i})I\{\bm{X}_{n,i}\le \bm{z}_1\mi \bm{z}_2\}\\
&=\frac{1}{n}\sum\limits_{i=1}^{\lf ns_1\rf \mi\lf ns_2\rf}\hspace{-0.3cm} (m_{n,i}(\bm{X}_{n,i})-\bar{m}_n(\bm{X}_{n,i}))^2\om_n(\bm{X}_{n,i})\bar{g}^{(t_1)}(\bm{X}_{n,i})\bar{g}^{(t_2)}(\bm{X}_{n,i})I\{\bm{X}_{n,i}\le \bm{z}_1\mi \bm{z}_2\}\\
&+o_P(1),
\end{align*}
with the same $\bar{m}_n$ as in (\ref{mn}), which converges to 
\[(m_{(1)}(\bm{x})-m_{(2)}(\bm{x}))\bar{g}^{(s_0)}(\bm{x})+m_{(2)}(\bm{x})\] 
(see (\ref{mn-limit})). Thus, it can be shown that
\begin{align*}
r_{n2}
&=\frac{1}{n}\sum\limits_{i=1}^{\lf ns_1\rf \mi\lf ns_2\rf \mi \lf ns_0\rf} \left(m_{(1)}(\bm{X}_{n,i})-m_{(2)}(\bm{X}_{n,i})\right)^2\om_n(\bm{X}_{n,i})\left(1-\bar{g}^{(s_0)}(\bm{X}_{n,i})\right)^2\\
&\hspace{6.5cm}\cdot\bar{g}^{(t_1)}(\bm{X}_{n,i})\bar{g}^{(t_2)}(\bm{X}_{n,i})I\{\bm{X}_{n,i}\le \bm{z}_1\mi \bm{z}_2\}\\
&+\frac{1}{n}\sum\limits_{i=\lf ns_1\rf \mi\lf ns_2\rf \mi \lf ns_0\rf+1}^{\lf ns_1\rf \mi\lf ns_2\rf} \left(m_{(1)}(\bm{X}_{n,i})-m_{(2)}(\bm{X}_{n,i})\right)^2\om_n(\bm{X}_{n,i})\bar{g}^{(s_0)}(\bm{X}_{n,i})^2\\
&\hspace{6.5cm}\cdot\bar{g}^{(t_1)}(\bm{X}_{n,i})\bar{g}^{(t_2)}(\bm{X}_{n,i})I\{\bm{X}_{n,i}\le \bm{z}_1\mi \bm{z}_2\}\\
&+o_P(1).
\end{align*}

It can be seen that these terms do not vanish but converge to some limit in probability. Thus the limiting distribution $G_0^*$ under $H_1$ is not equal to $G_0$ and in particular depends on the changepoint $s_0$. As seen before  under $H_1$ the original test statistic $T_{n1}$ converges in probability to infinity. On the other hand, the bootstrap test statistic $T_{n1}^*$ conditionally converges in distribution to some non-degenerated limit, in probability. Thus the bootstrap test is consistent. 

%%%%%%%%%%%%%%%%%%%%%%
\section{Finite sample properties}  \label{simus}
%%%%%%%%%%%%%%%%%%%%%%
A small Monte Carlo study is conducted in order to compare the results for $T_{n1}$ and $T_{n2}$ from section \ref{asympt} with those of the traditional CUSUM versions denoted by $KS:=\sup_{s\in[0,1]}|\hat{T}_n(s,\infty)|$ and $CM:=\int|\hat{T}_n(s,\infty)|^2ds$. Note that the results for $\tilde{T}_{n3}$ and $\tilde{T}_{n4}$ are similar and omitted for reasons of brevity. Asymptotic tests are applied to data satisfying models 1 and 2, while the bootstrap versions are applied to model 3 explained below. Also note that simulation results for a multidimensional autoregression model can additionally be found in the supplement. All simulations are carried out with a level of $5\%$, $500$ replications and $200$ bootstrap replications and for sample sizes $n\in\{100,300,500\}$. For the nonparametric estimators we use a fourth order Epanechnikov kernel and the bandwidth is chosen by the cross validation method. For simplicity we set $\om_n\equiv 1$. The data is simulated from the following models.
\begin{alignat*}{3}
&&\text{(model 1)} 
\hspace{2cm}&Y_t=m_t(X_t)+\sqrt{1+0.5X_t^2}\ep_t, \ \ep_t\sim\mathcal{N}(0,1), & \\
&~&&m_t(x)=\begin{cases}0.5x, \ & t=1,\dots, \lf n/2\rf \\(0.5+\Delta_0e^{-0.8x^2})x, \ & t=\lf n/2 \rf +1,\dots, n\end{cases},
%m_t(x)&=(0.5+\Delta_0I\{t\in\{\lf ns_0 \rf +1,\dots, n\}\}e^{-0.8x^2})x,
\end{alignat*}
where $X_i$ is an exogenous variable following the AR(1) model $X_t=0.4X_{t-1}+\xi_t$ with $\xi_i$ being i.i.d.~$\sim\mathcal{N}(0,1)$ and $\Delta_0\in \{0,0.5,1,1.5,1.5,2,2.5,3,3.5,4\}$. 
\begin{alignat*}{3}
&&\text{(model 2)} 
\hspace{1.8cm}&Y_t=m_t(Y_{t-1})+\si(Y_{t-1})\ep_t,  \ \ep_t\sim\mathcal{N}(0,1),& \\
&~&&m_t(x)=\begin{cases}-0.9x, \ & t=1,\dots, \lf n/2\rf \\\left(-0.9+\Delta_0\right)x, \ & t=\lf n/2 \rf +1,\dots, n\end{cases}, \hspace{1.2cm}
%m_t(x)&=(0.5+\Delta_0I\{t\in\{\lf ns_0 \rf +1,\dots, n\}\}e^{-0.8x^2})x,
\end{alignat*}
with $\Delta_0\in \{0,0.2,0.4,0.6,0.8,1,1.2,1.4,1.6,1.8\}$. Consider the homoscedastic case, where $\si^2(x)=1$ and the heteroscedastic case, where $\si^2(x)=1+0.1x^2$.
\begin{alignat*}{3}
&&\text{(model 3)} 
\hspace{1.8cm}&Y_t=m_t(Y_{t-1})+\si_t(Y_{t-1})\ep_t, \ \ep_t\sim\mathcal{N}(0,1), & \\
&~&&\sigma^2_t(x)=\begin{cases}1+0.1x^2, \ & t=1,\dots,\lf nt_0 \rf \\ 1+0.8x^2, \ & t=\lf nt_0 \rf +1,\dots,n\end{cases},\\
&~&&m_t(x)=\begin{cases}0.9x, \ & t=1,\dots, \lf n/2\rf \\\left(0.9-\Delta_0\right)x, \ & t=\lf n/2\rf +1,\dots, n\end{cases}, \hspace{1.2cm}
%m_t(x)&=(0.5+\Delta_0I\{t\in\{\lf ns_0 \rf +1,\dots, n\}\}e^{-0.8x^2})x,
\end{alignat*}
with $\Delta_0\in\{0,1.3\}$ and $t_0\in\{0.25,0.5,0.75\}$.

Model 1 is a regression model with autoregressive covariables. In model 2 we consider both a homoscedastic and heteroscedastic autoregression model, while model 3 is a heteroscedastic autoregression with non-homogeneous variances. All models fulfill $H_0$ for $\Delta_0=0$ and $H_1$ for $\Delta_0\neq 0$ with a change in regression function occurring in $\lf n/2\rf$. Further, note that models 1 and 2 fulfill the stationarity and mixing assumptions under $H_0$, while model 3 is not stationary as one change occurs in the conditional variance function under both $H_0$ and $H_1$. Hence, for model 3 we apply the bootstrap test from section \ref{bootstrap}.

Figure \ref{fig:Vis_mean_exo}, \ref{fig:Vis_AR1} and \ref{fig:Vis_AR1ARCH1_small} are visualizations of the performance of $T_{n1}$ and $T_{n2}$, as well as $KS$ and $CM$ in model 1 and 2. Under the null the rejection frequencies for all tests are near the nominal level. For model 1 the CUSUM tests are not consistent against $H_1$, while the tests based on the marked process are. In model 2 the rejection frequencies of all tests increase with increasing break size. Note however that the increase is much faster for $T_{n1}$ and $T_{n2}$ than for the CUSUM tests. Also note that the influence of the conditional variance is rather small resulting in a similar performance in both the homoscedastic and heteroscedastic case. Table \ref{sim_nonstat_var_all} shows the rejection frequencies of the bootstrap procedure using $T^*_{n1}$ and $T^*_{n2}$, as well as the bootstrap version of the CUSUM tests $KS$ and $CM$ under both the null and the alternative hypothesis. The level simulations show that all tests perform reasonably well under $H_0$, approximately holding the level indicating that the bootstrap test is -- as desired -- not sensitive to changes in the conditional variance function. Furthermore, it can be seen that for all models and all tests the rejection frequency under $H_1$ exceeds the level, indicating that the change point is detected. With increasing sample size, the number of rejections increases rapidly for $T_{n1}^*$ and $T_{n2}^*$, while it stays approximately constant for the bootstrap versions of $KS$ and $CM$. This is presumably due to the fact that the test statistics based on $\hat{T}_{n}(s,\infty)$ estimate some integral that might be small under $H_1$. As was pointed out in subsection \ref{section-bootstrap}, this integral not vanishing is essential for the consistency property for the bootstrap tests.
%Comparing the three different models, it is to say that for $t_0\in\{0.5,0.75\}$ all tests perform better than for $t_0=0.25$. A possible explanation is that in $t_0$ the influence of the variance jumps from a small influence to a large influence, resulting in the fact that the tests can not detect the alternative as good for $s_0>t_0$. As $s_0=0.5$ this is only the case for $t_0=0.25$. 
%
\begin{figure}[!htbp]
  \centering
     \includegraphics[width=1\textwidth]{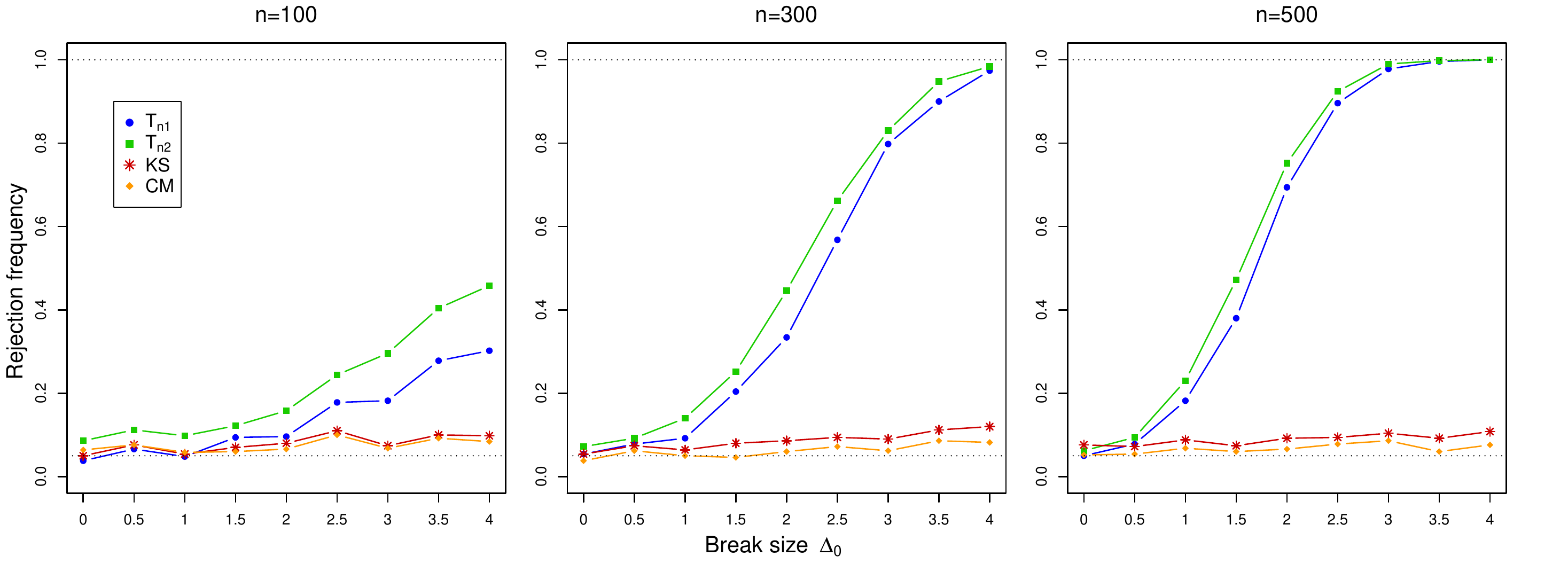}
  \caption{Rejection frequencies in model 1}
  \label{fig:Vis_mean_exo}
\end{figure}
\begin{figure}[!htbp]
  \centering
     \includegraphics[width=1\textwidth]{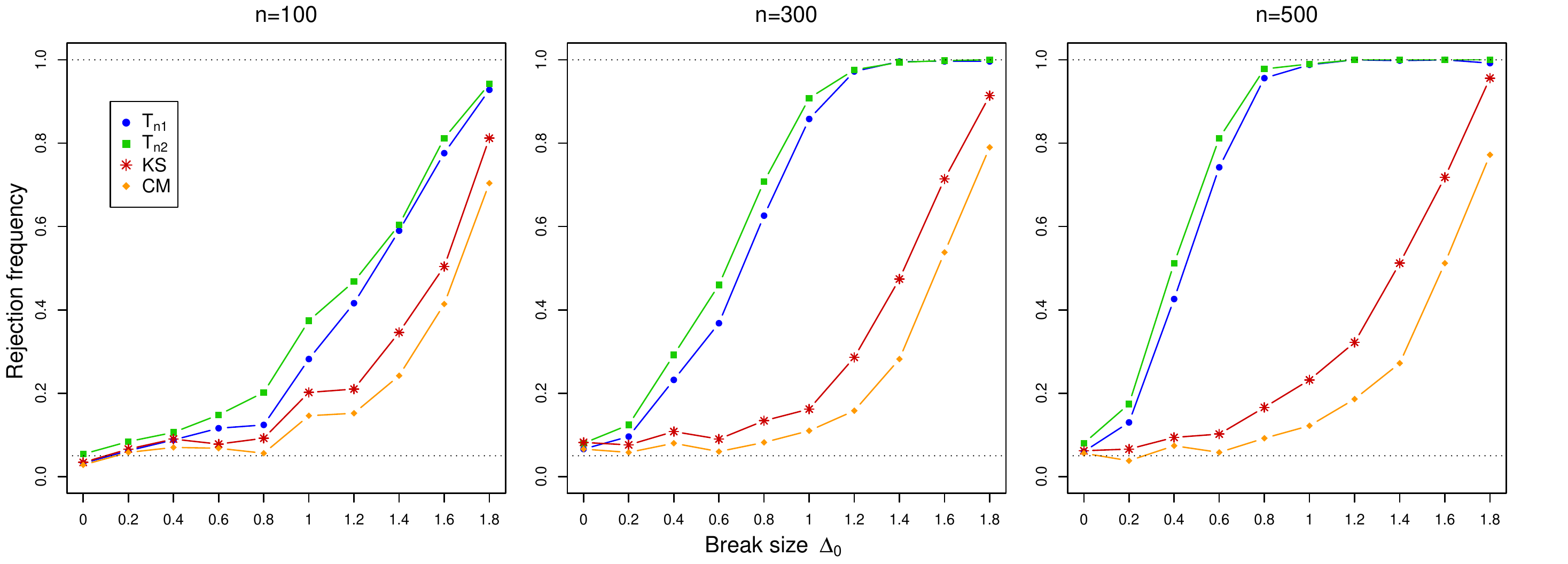}
  \caption{Rejection frequencies in model 2 with $\si^2(x)=1$}
  \label{fig:Vis_AR1}
\end{figure}
\begin{figure}[!htbp]
  \centering
     \includegraphics[width=1\textwidth]{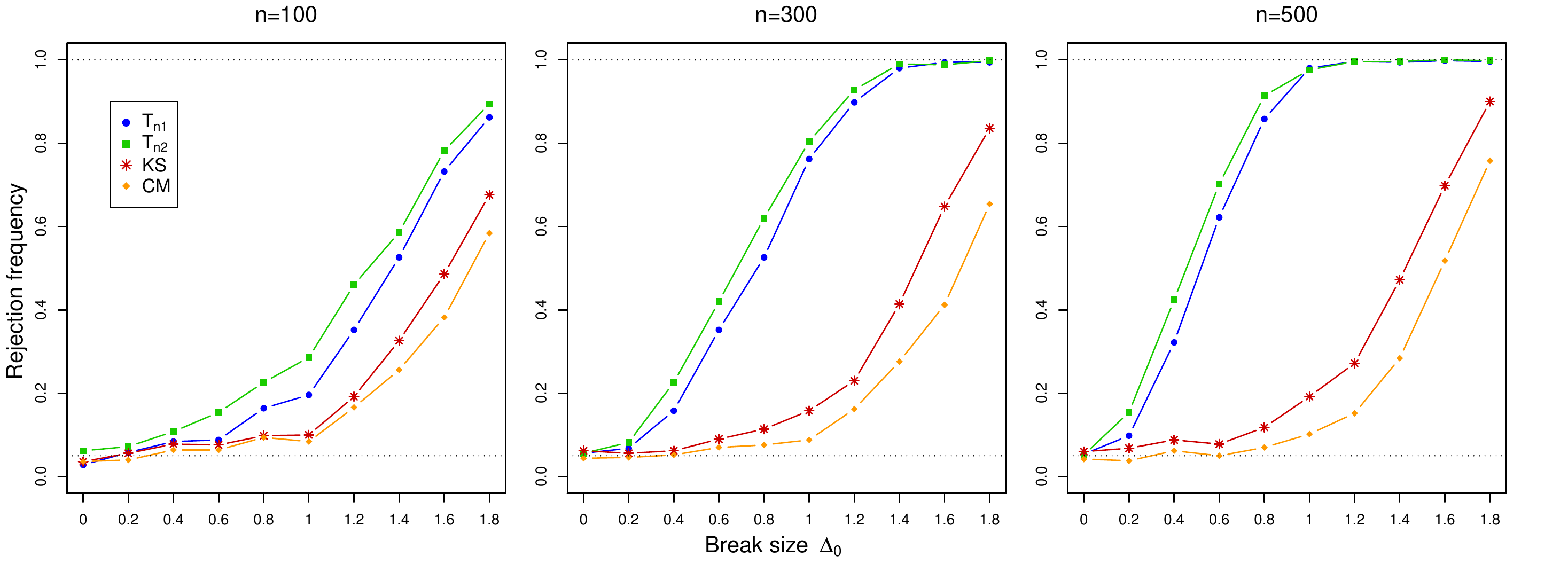}
  \caption{Rejection frequencies in model 2 with $\si^2(x)=1+0.1x^2$}
  \label{fig:Vis_AR1ARCH1_small}
\end{figure}
\begin{table}[!htbp]
\small
  \centering 
	\caption{Rejection frequencies in model 3}\label{sim_nonstat_var_all}
\begin{tabular}{@{}r r c r r r r c r r r r @{}} 
    \toprule
\multicolumn{2}{c}{}& \phantom{a} & \multicolumn{4}{c}{under $H_0$} & \phantom{a} & \multicolumn{4}{c}{under $H_1$}\\%[0.2$CM$]
\cmidrule{4-7} \cmidrule{9-12}
%\multicolumn{2}{c}{}& \phantom{a} & \multicolumn{2}{c}{$\hat{T}^*_n(s,z)$} & \multicolumn{2}{c}{$\hat{T}^*_n(s,\infty)$} & \phantom{a} & \multicolumn{2}{c}{$\hat{T}^*_n(s,z)$} &  \multicolumn{2}{c}{$\hat{T}^*_n(s,\infty)$} \\%[0.2$CM$]
%\cmidrule(l){4-5} \cmidrule(l){6-7} \cmidrule(l){9-10} \cmidrule(l){11-12}
 $t_0$& $n$  &  & $T_{n1}^*$   & $T_{n2}^*$  &  $KS$ & $CM$  &  & $T_{n1}^*$   & $T_{n2}^*$  &  $KS$ & $CM$\\ 
\midrule 
$0.25$   & $100$  & &  		 $0.030$  &  $0.046$  &	 $0.030$  &  $0.054$  	& &		$0.286$  &  $0.270$  &  $0.192$  &  $0.168$	 \\
				 %& $200$  & &     $0.044$  &  $0.044$  & $0.056$  &  $0.040$     & &   $0.472$  &  $0.492$  &  $0.210$  &  $0.146$  \\
			   & $300$  & &      $0.068$  &  $0.064$  &  $0.080$  &  $0.052$    & &   $0.652$  &  $0.644$  &  $0.248$  &  $0.172$  \\		
				 & $500$  & &      $0.060$  &  $0.052$  &  $0.058$  &  $0.046$    & &   $0.878$  &  $0.868$  &  $0.264$  &  $0.194$  \\
\midrule 
$0.50$   & $100$  & &  		$0.068$  &  $0.048$  &  $0.068$  &  $0.056$			&	&	 	$0.420$  &  $0.438$  &  $0.316$  &  $0.256$  \\
				 %& $200$  & &    $0.048$  &  $0.048$  &  $0.040$  &  $0.044$     & &   $0.688$  &  $0.722$  &  $0.386$  &  $0.308$  \\
			   & $300$  & &  		$0.066$  &  $0.050$  &  $0.056$  &  $0.046$   	&	&   $0.868$  &  $0.894$  &  $0.378$  &  $0.292$  \\		
				 & $500$  & &  		$0.046$  &  $0.040$  &  $0.058$  &  $0.040$			& &	  $0.994$  &  $0.996$  &  $0.434$  &  $0.324$ 	\\		
\midrule 
$0.75$   & $100$  & &  		$0.060$  &  $0.056$  &  $0.072$  &  $0.070$			&	&  	$0.404$  &  $0.388$  &  $0.332$  &  $0.266$  \\
				 %& $200$  & &  	$0.052$  &  $0.050$  &  $0.048$  &  $0.054$			&	&	  $0.638$  &  $0.636$  &  $0.290$  &  $0.212$	 \\
			   & $300$  & &  		$0.048$  &  $0.048$  &  $0.050$  &  $0.056$		  &	&   $0.830$  &  $0.848$  &  $0.382$  &  $0.250$  \\		
				 & $500$  & &  		$0.034$  &  $0.040$  &  $0.046$  &  $0.056$			&	&   $0.986$  &  $0.988$  &  $0.350$  &  $0.202$  \\				
\bottomrule  
\end{tabular}%
\end{table}

Finally, we apply the asymptotic test based on $T_{n1}$ to $36$ measurements of the annual flow volume of the small Czech river R\'{a}ztoka recorded between 1954 and 1989. It was considered by \cite{Huskova2003201}. We set $X_t$ as the annual rainfall and $Y_t$ as the annual flow volume. The asymptotic test clearly rejects $H_0$ with a $p$-value of $0.0006$. The possible change point is estimated by $\hat{s}_n$ from section \ref{conclusion} and suggests a change in 1979. Note that this is consistent with the literature. As was pointed out by \cite{Huskova2003201} deforestation had started around that time, which is a possible explanation. Figure \ref{fig:creek_plot} shows on the left-hand side the scatterplot $X_t$ against $Y_t$ using dots for the observations after the estimated change and crosses for the observations before the estimated change. On the right-hand side the figure shows the cumulative sum, $\sup_{z\in\R}|\hat{T}_n(\cdot,z)|$, as well as the critical value (red horizontal line) and the estimated change (green vertical line).

\begin{figure}[!htbp]
  \centering
     \includegraphics[width=0.9\textwidth]{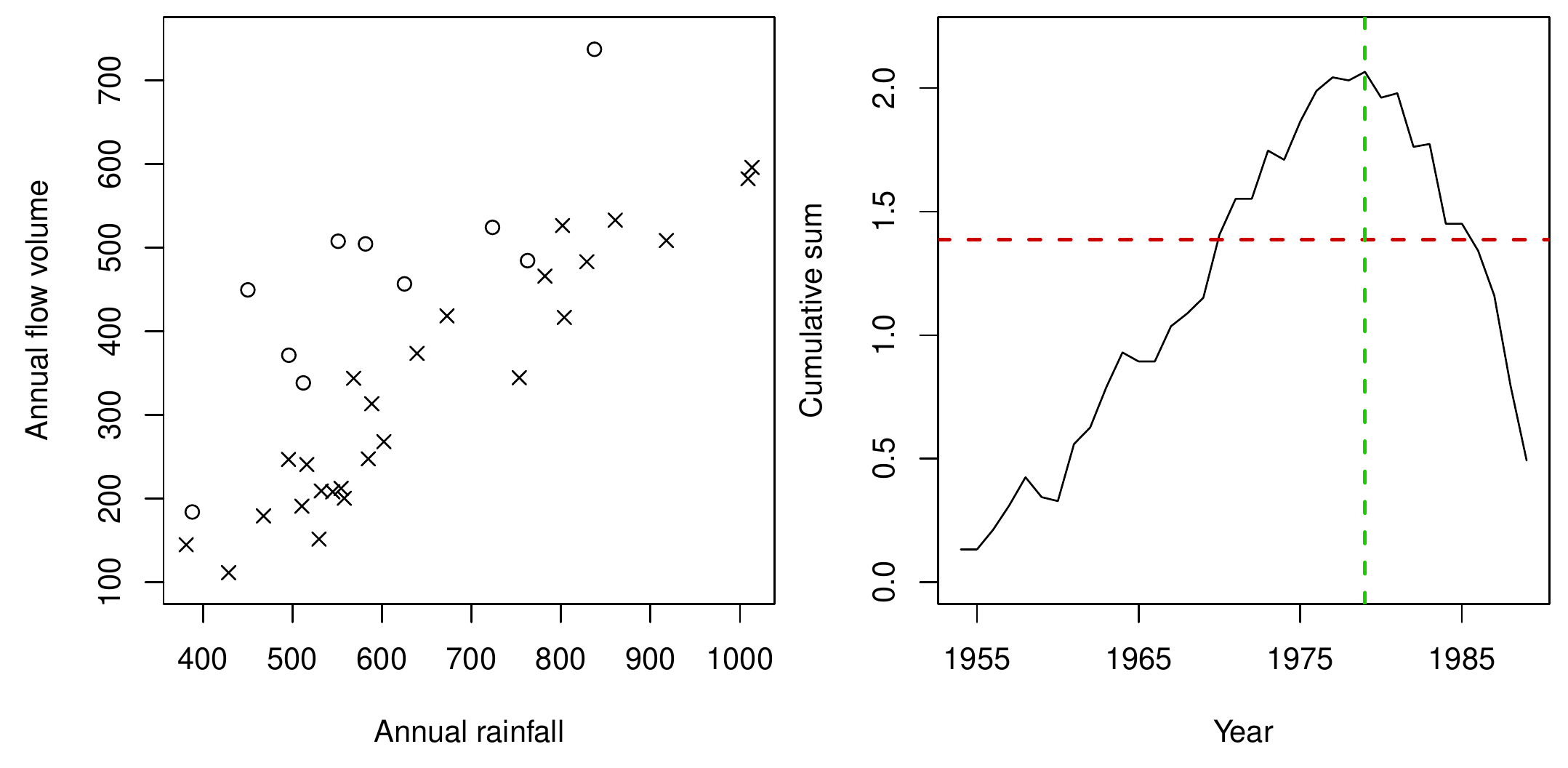}
  \caption{R\'{a}ztoka data: scatterplot (left) and CUSUM (right)}
  \label{fig:creek_plot}
\end{figure}

%%%%%%%%%%%%%%%%%%%%%%
\section{Concluding remarks} \label{conclusion}
%%%%%%%%%%%%%%%%%%%%%%

We suggested a new test for structural breaks in the regression function in nonparametric time series (auto-)regression. Our approach combines CUSUM statistics with the marked empirical process approach from goodness-of-fit testing. The considered model is very general and does not require independent innovations, nor homoscedasticity. We show favorable asymptotic properties and demonstrate that the new testing procedures are consistent against fixed alternatives, while the traditional CUSUM tests are not. 
An estimator for the change point is given by $\hat{s}_n:=\arg\max_{s\in[0,1]}\sup_{\bm{z}\in\R^d}|\hat{T}_n(s,\bm{z})|$. Asymptotic properties of this estimator will be considered in future research. 

Moreover we have suggested a bootstrap version that can also be applied to detect changes in the regression function in the presence of changing variance functions. In a forthcoming paper we will consider testing for changes in the variance function. 

\appendix

%%%%%%%%%%%%%%%%%%%%%%
\section{Proofs and derivations} \label{appendix}
%%%%%%%%%%%%%%%%%%%%%%

In subsection \ref{Appendix-nonparametric} we give some auxiliary results for the proof of Theorem \ref{decomposition}. The proof of the first part of the theorem was given in the main text, while the proof of the second part can be found in  subsection \ref{Appendix-proof}. Some lemmata are proved in subsection \ref{Lemma-proof} while some details are deferred to the supplement. 
%In subsections .. we give details for the derivations in the case of non-homogeneous variances and the bootstrap procedure. 
Detailed proofs can also be found in \cite{Mohr2018}.  

\subsection{Auxiliary results}\label{Appendix-nonparametric}

%Lemma 2.2 aus der Diss

The following assumptions are formulated for the first lemma that gives uniform rates of convergence for the regression estimator $\hat m_n$ from \eqref{eq:NadWat} and its derivatives. They hold under the assumptions of Theorem \ref{decomposition}. 

\begin{enumerate}
%Process
\item[\textbf{(P)}] Let $(Y_t,\bm{X}_t)_{t\in\Z}$ be a strictly stationary and strongly mixing process with mixing coefficient $\alpha(\cdot)$. For some $b>2$ let
$\alpha(t)=O(t^{-\beta})$ for $t\to\infty$ with some $\beta>(1+(b-1)\left(1+d\right))/(b-2)$. 
\item[\textbf{(B3)}] With $b$ and $\beta$ from assumption \textbf{(P)} let 
$(\log n)/(n^{\theta}h_n^d)=o(1)$ for $\theta=(\beta-1-d-(1+\beta)/(b-1))/(\beta+3-d-(1+\beta)/(b-1))$. 
\end{enumerate}

\begin{Lemma} \label{Raten kernel}
Under the assumptions \textbf{(P)}, \textbf{(M)}, \textbf{(J)}, \textbf{(F1)}, \textbf{(K)}, \textbf{(B1)} and \textbf{(B3)} the following rates of convergence can be obtained for the Nadaraya-Watson estimator $\hat{m}_n$, 
\begin{enumerate}
\item[(a)] $\sup\limits_{\bm{x}\in \bm{J}_n}\left|\hat{m}_n(\bm{x})-m(\bm{x})\right|=O_P\left(\left(\sqrt{\frac{\log{(n)}}{nh_n^d}}+h_n^rp_n\right)q_n\delta_n\right)$,
\item[(b)] $\sup\limits_{\bm{x}\in \bm{J}_n}\left|D^{\bm{k}}\left(\hat{m}_n(\bm{x})-m(\bm{x})\right)\right|=O_P\left(\left(\sqrt{\frac{\log{(n)}}{nh_n^{d+2|\bm{k}|}}}+h_n^rp_n\right)p_n^{|\bm{k}|}q_n\delta_n^{|\bm{k}|+1}\right)$ for all $\bm{k}\in\N_0^d$ with $1\le |\bm{k}|\le l+1$,
\item[(c)] $\displaystyle\sup\limits_{\substack{\bm{x},\bm{y}\in \bm{J}_n \\ \bm{x}\neq\bm{y}}}\frac{\left|D^{\bm{k}}\left(\hat{m}_n(\bm{x})-m(\bm{x})\right)-D^{\bm{k}}\left(\hat{m}_n(\bm{y})-m(\bm{y})\right)\right|}{\|\bm{x}-\bm{y}\|^{\eta}}=o_P(1)$ for all $\bm{k}\in\N_0^d$ with $|\bm{k}|= l$. %and for $\eta\in(0,1)$ from \textbf{(B2)}.
%$O_P\left(\left(\sqrt{\frac{\log (n)}{nh_n^{d+2(l+1)}}}+h_n^{r}p_n\right)p_n^{l+\eta}q_n\delta_n^{l+1+\eta}\right)$ 
\end{enumerate}
\end{Lemma}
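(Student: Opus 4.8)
Proof proposal.

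The plan is to establish part (a) as the base case and then derive (b) and (c) from it. Write $\hat m_n(\bm x)-m(\bm x)=\hat R_n(\bm x)/\hat f_n(\bm x)$, where $\hat f_n(\bm x)=\frac{1}{nh_n^d}\sum_{j=1}^nK((\bm x-\bm X_j)/h_n)$, $\hat g_n(\bm x)=\frac{1}{nh_n^d}\sum_{j=1}^nK((\bm x-\bm X_j)/h_n)Y_j$, and $\hat R_n:=\hat g_n-m\hat f_n$. For the numerator I would split $\hat R_n=(\hat g_n-E\hat g_n)-m(\hat f_n-E\hat f_n)+(E\hat g_n-mE\hat f_n)$. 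The two centred terms are controlled uniformly on $\bm J_n$ by Theorem~2 in \cite{Hansen2008726}, whose hypotheses are exactly what assumptions \textbf{(P)}, \textbf{(M)}, \textbf{(K)} and \textbf{(B3)} are tailored to supply (polynomial mixing, the moment bound $\sup_{\bm x}E[|Y_1|^b\mid\bm X_0=\bm x]f(\bm x)<\infty$, compact support and Lipschitz continuity of $K$, and the bandwidth restriction), giving the stochastic rate $\sqrt{\log n/(nh_n^d)}$ with a constant that, after bounding $\sup_{\bm I_n}|m|\le q_n$, carries the factor $q_n$. The deterministic remainder $E\hat g_n(\bm x)-m(\bm x)E\hat f_n(\bm x)=\int K_{h_n}(\bm x-\bm u)(m(\bm u)-m(\bm x))f(\bm u)\,d\bm u$ is handled by a Taylor expansion of $m(\cdot)f(\cdot)$ around $\bm x$ combined with the order-$r$ vanishing-moment property in \textbf{(K)}, producing the bias rate $h_n^rp_nq_n$ (derivatives of $f$ up to order $r$ enter through $p_n$, those of $m$ through $q_n$, all on the enlarged set $\bm I_n$ that the kernel support can reach). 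Finally, since $\inf_{\bm J_n}f=\delta_n^{-1}$ and $\sup_{\bm J_n}|\hat f_n-f|=o_P(\delta_n^{-1})$ under \textbf{(B1)}--\textbf{(B3)}, one has $\inf_{\bm J_n}\hat f_n\ge\tfrac12\delta_n^{-1}$ with probability tending to one, and dividing by $\hat f_n$ multiplies the bound by $\delta_n$, yielding (a).

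For part (b) I would argue by induction on $|\bm k|$, differentiating the identity $\hat f_n\cdot(\hat m_n-m)=\hat R_n$ via the Leibniz rule:
\[
\hat f_n(\bm x)\,D^{\bm k}(\hat m_n-m)(\bm x)=D^{\bm k}\hat R_n(\bm x)-\sum_{\bm 0\neq\bm j\le\bm k}\binom{\bm k}{\bm j}D^{\bm j}\hat f_n(\bm x)\,D^{\bm k-\bm j}(\hat m_n-m)(\bm x).
\]
Each $D^{\bm j}\hat f_n$ and $D^{\bm k}\hat g_n$ (hence $D^{\bm k}\hat R_n$, after re-expanding $D^{\bm k}(m\hat f_n)$ by Leibniz and comparing with $D^{\bm k}(mf)$) is again a kernel average, now with kernel $D^{\bm j}K$ and an extra factor $h_n^{-|\bm j|}$; by \textbf{(K)} these derivative kernels are bounded, Lipschitz and compactly supported for $|\bm j|\le l+1$, so \cite{Hansen2008726} applies verbatim and gives the rate $\sqrt{\log n/(nh_n^{d+2|\bm j|})}+h_n^rp_n$. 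Feeding the induction hypothesis into the sum, bounding $\sup_{\bm I_n}|D^{\bm j}f|\le p_n$ and $\sup_{\bm I_n}|D^{\bm j}m|\le q_n$, and dividing by $\hat f_n\ge\tfrac12\delta_n^{-1}$, one checks that each division contributes one power of $\delta_n$ and each $f$-derivative one power of $p_n$, so the rate at level $|\bm k|$ is exactly $(\sqrt{\log n/(nh_n^{d+2|\bm k|})}+h_n^rp_n)p_n^{|\bm k|}q_n\delta_n^{|\bm k|+1}$; the first ($O(1)$) display of \textbf{(B1)} is precisely what keeps the accumulated lower-order terms and the factors $\hat f_n^{-1}$ from dominating along the recursion.

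Part (c) then follows from (b) by interpolation: if a differentiable $g:\R^d\to\R$ satisfies $\|g\|_\infty\le A$ and $\|\nabla g\|_\infty\le B$, then $|g(\bm x)-g(\bm y)|\le\min\{2A,B\|\bm x-\bm y\|\}\le(2A)^{1-\eta}B^{\eta}\|\bm x-\bm y\|^{\eta}$, so its H\"older-$\eta$ seminorm is at most $(2A)^{1-\eta}B^{\eta}$. Applying this to $g=D^{\bm k}(\hat m_n-m)$ with $|\bm k|=l$, with $A$ the rate from (b) at order $l$ and $B$ the rate from (b) at order $l+1$, the powers combine as $p_n^{l(1-\eta)+(l+1)\eta}=p_n^{l+\eta}$, $\delta_n^{(l+1)(1-\eta)+(l+2)\eta}=\delta_n^{l+1+\eta}$, $q_n^{1-\eta}q_n^{\eta}=q_n$, while the bracketed factor is dominated by $\sqrt{\log n/(nh_n^{d+2(l+1)})}+h_n^rp_n$ since $\eta<1$; the resulting bound is $O_P\big((\sqrt{\log n/(nh_n^{d+2(l+1)})}+h_n^rp_n)p_n^{l+\eta}q_n\delta_n^{l+1+\eta}\big)$, which is $o(1)$ precisely by the second display of \textbf{(B1)}, giving (c).

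The genuine difficulty is the bookkeeping of the $\delta_n$ and $p_n$ powers through the Leibniz/quotient expansion on the \emph{expanding} set $\bm J_n$, where $f$ is allowed to degenerate ($\delta_n\to\infty$), together with the verification that the uniform-rate result of \cite{Hansen2008726} still applies to the derivative kernels $D^{\bm k}K$ and with the sharp powers of $h_n$; once those are in place, the remaining ingredients are a routine Taylor expansion for the bias and the elementary interpolation inequality above.
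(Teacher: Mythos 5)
Your proposal is correct and follows essentially the same route as the paper, which omits the proof entirely with the remark that it is ``analogous to the proof of Theorem 8 of \cite{Hansen2008726}'' and rests on the uniform rates for kernel averages in Theorem 2 of that reference --- exactly the ratio decomposition, bias Taylor expansion against the order-$r$ kernel, Leibniz recursion for the derivatives, and H\"older interpolation between orders $l$ and $l+1$ that you spell out. Your write-up in fact supplies the bookkeeping of the $p_n$, $q_n$, $\delta_n$ powers on the expanding sets $\bm{J}_n\subset\bm{I}_n$ that the paper leaves implicit, and the interpolation step for part (c) is valid since $\bm{J}_n$ is convex and $\min\{2A,B\|\bm{x}-\bm{y}\|\}\le(2A)^{1-\eta}B^{\eta}\|\bm{x}-\bm{y}\|^{\eta}$.
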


The proof of Lemma \ref{Raten kernel} is analogous to the proof of Theorem 8 of \cite{Hansen2008726} and omitted for the sake of brevity. The proofs of the following lemmata are given in subsection \ref{Lemma-proof}. 

%Lemmata
%Lemma: \hat{h} durch h ersetzen
\begin{Lemma} \label{Lemma2}
Under the assumptions of Theorem \ref{decomposition} (i) and under $H_0$ we have for $A_{n1}$ from (\ref{A_n1})
\begin{align*}
A_{n1}(s,\bm{z})
&=s\sqrt{n}\int_{\R^d}(m(\bm{x})-\hat{m}_n(\bm{x}))\om_n(\bm{x})I\{\bm{x}\le \bm{z}\}f(\bm{x})d\bm{x}+o_P(1)
\end{align*}
 uniformly in $s\in [0,1]$ and $\bm{z}\in\R^d$.
\end{Lemma}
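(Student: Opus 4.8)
The plan is to write $A_{n1}$ as a sequential marked empirical process whose mark is the random function $\hat g_n:=m-\hat m_n$, centre it, and show that the centred part is uniformly $o_P(1)$ while the non‑centred part is exactly the asserted integral up to an error produced by $\lf ns\rf/n-s$. Concretely, set $\psi_{g,\bm z}(\cdot):=g(\cdot)\om_n(\cdot)I\{\cdot\le\bm z\}$, so $A_{n1}(s,\bm z)=\frac1{\sqrt n}\sum_{i=1}^{\lf ns\rf}\psi_{\hat g_n,\bm z}(\bm X_i)$. Because $\bm X_i$ enters both $\hat m_n$ and the indicator one cannot centre naively; the remedy is to pass to a deterministic index set. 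By Lemma \ref{Raten kernel}(a)--(c) there is a sequence of deterministic function classes $\mathcal M_n$ — those $g:\R^d\to\R$ with $\sup_{\bm J_n}|g|$ and $\sup_{\bm J_n}|D^{\bm k}g|$, $1\le|\bm k|\le l+1$, bounded by constant multiples of the rates in Lemma \ref{Raten kernel} and with uniformly bounded Hölder-$\eta$ seminorm of the $l$-th derivatives on $\bm J_n$ — with $P(\hat g_n\in\mathcal M_n)\to1$, and moreover $\varepsilon_n:=\sup_{g\in\mathcal M_n}\sup_{\bm J_n}|g|\to0$ thanks to \textbf{(B1)}, \textbf{(B2)}. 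On the event $\{\hat g_n\in\mathcal M_n\}$, using $E\psi_{g,\bm z}(\bm X_1)=\int_{\R^d}g\,\om_n I\{\cdot\le\bm z\}f\,d\bm x$ (which is legitimate since $g$ is now non‑random), we split
\[
A_{n1}(s,\bm z)=\underbrace{\frac1{\sqrt n}\sum_{i=1}^{\lf ns\rf}\bigl(\psi_{\hat g_n,\bm z}(\bm X_i)-E\psi_{\hat g_n,\bm z}(\bm X_1)\bigr)}_{=:R_n(s,\bm z)}+\frac{\lf ns\rf}{\sqrt n}\int_{\R^d}(m-\hat m_n)\om_n I\{\cdot\le\bm z\}f\,d\bm x,
\]
and since $\lf ns\rf/\sqrt n=s\sqrt n+O(n^{-1/2})$, the second summand is the claimed main term plus a remainder bounded in modulus by $n^{-1/2}\sup_{\bm J_n}|\hat m_n-m|=o_P(1)$, uniformly in $s,\bm z$, by Lemma \ref{Raten kernel}(a).

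It remains to prove $\sup_{s\in[0,1],\bm z\in\R^d}|R_n(s,\bm z)|=o_P(1)$, for which it suffices to control $\sup_{g\in\mathcal M_n}\sup_{s,\bm z}\bigl|\frac1{\sqrt n}\sum_{i=1}^{\lf ns\rf}(\psi_{g,\bm z}(\bm X_i)-E\psi_{g,\bm z}(\bm X_1))\bigr|$ over the now purely deterministic index set $\{\psi_{g,\bm z}:g\in\mathcal M_n,\bm z\in\R^d\}$. This is a maximal inequality for the sequential empirical process of the exponentially $\alpha$‑mixing sequence $(\bm X_i)$ (assumption \textbf{(G)}) indexed by a class whose envelope and $L^2(F)$‑radius are both $O(\varepsilon_n)$ and whose bracketing/uniform metric entropy is sub‑polynomial in $n$: the indicators $\{I\{\cdot\le\bm z\}:\bm z\in\R^d\}$ form a VC class, while the smooth functions in $\mathcal M_n$ restricted to the cube $\bm J_n$ of side $O((\log n)^{1/d})$ lie in a Hölder ball and hence have $L^\infty$‑metric entropy of order a power of $\log n$. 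Invoking the sequential‑empirical‑process result of \cite{Mohr2017} (alternatively a big‑block/small‑block plus chaining argument for mixing arrays) gives that this supremum is $O_P(\varepsilon_n(\log n)^{a})$ for some fixed $a$, which is $o_P(1)$ once the bandwidth conditions in \textbf{(B1)}, \textbf{(B2)} are used to ensure $\varepsilon_n(\log n)^a\to0$.

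The main obstacle is precisely this last step: establishing a maximal inequality, uniform over the drifting class $\mathcal M_n\times\{I\{\cdot\le\bm z\}\}$ (shrinking in height but with support growing like $(\log n)^{1/d}$), for the \emph{sequential} rather than ordinary empirical process of a weakly dependent sequence, and verifying that the envelope/entropy bookkeeping flowing out of Lemma \ref{Raten kernel} together with \textbf{(B1)}--\textbf{(B2)} genuinely beats the slowly growing metric entropy. The remaining ingredients — the reduction to a deterministic index set via Lemma \ref{Raten kernel}, the extraction of the $s\sqrt n\int(\cdot)$ main term, and the $O(n^{-1/2})$ bound on the $\lf ns\rf/n-s$ correction — are routine.
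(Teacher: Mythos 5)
Your overall architecture matches the paper's proof: replace the random mark $m-\hat m_n$ by membership in a deterministic smoothness class guaranteed by Lemma \ref{Raten kernel} (the paper uses $\cH=\cC_{1,n}^{l+\eta}(\bm{J}_n)$, functions with $\|\cdot\|_{l+\eta}\le 1$ and sup-norm at most $z_n\sqrt{\log n}$ where $z_n=q_n\delta_n((\log n)/(nh_n^d))^{1/2}$), center against $\int h\ph\,dP$, extract the $s\sqrt n\int(\cdot)$ main term, and absorb the $\lf ns\rf/n-s$ correction. Those reduction steps are fine.

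The gap is in the step you yourself flag as the main obstacle, and your quantitative setup for it is wrong in a way that would not survive the computation. The $L^\infty$-bracketing number of a H\"older-$(l+\eta)$ ball on the cube $\bm{J}_n$ of side $O((\log n)^{1/d})$ at the scale $\ep_{n3}=n^{-1/2}/\log n$ needed here is of order $\exp(c_n^d\,\ep_{n3}^{-d/(l+\eta)})$, so its logarithm grows like $n^{d/(2(l+\eta))}$ up to logarithmic factors --- a positive power of $n$, not ``a power of $\log n$'' as you assert. The index class is therefore not of sub-polynomial entropy, and the claimed bound $O_P(\varepsilon_n(\log n)^a)$ does not follow from VC-plus-H\"older bookkeeping. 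This is precisely why the first condition in \textbf{(B2)} reads $(\log n)^{3+d/(l+\eta)}q_n^2\delta_n^2/\sqrt{n^{1-d/(l+\eta)}h_n^d}=o(1)$: the exponential inequality must beat a union bound over a number of brackets that is exponential in a power of $n$, and the loss of the factor $n^{d/(2(l+\eta))}$ relative to $\sqrt{nh_n^d}$ is exactly what that condition compensates. Moreover, Corollary 2.7 of \cite{Mohr2017} is stated for a fixed function class and does not apply off the shelf to a class drifting with $n$; the paper instead carries out the maximal inequality by hand, discretizing $s$, building explicit brackets for the indicators and for $\cH$ (split according to the signs of the upper and lower bracket functions so that products with indicator brackets stay ordered), and applying the exponential inequality of Theorem 2.1 in \cite{Liebscher199669} to each of the $K_nJ_nM_n$ resulting terms. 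As written, the heart of the lemma is left unproved and is framed with an entropy estimate that is incorrect.
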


%Lemma: Gewichtsfunktion durch 1 ersetzen
\begin{Lemma} \label{Lemma1}
Under the  assumptions of Theorem \ref{decomposition} (i) and under $H_0$  we have for $A_{n2}$ from (\ref{A_n2}) and  $T_n$ from (\ref{Tn})
\begin{align*}
A_{n2}(s,\bm{z})=T_n(s,\bm{z})+o_P(1),
\end{align*}
uniformly in $s\in [0,1]$ and $\bm{z}\in\R^d$.
\end{Lemma}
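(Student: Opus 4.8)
The plan is to show that truncating the covariate to the growing box $\bm J_n$ is asymptotically negligible. Since $\om_n(\bm x)=I\{\bm x\in\bm J_n\}$ we have $\om_n(\bm x)-1=-I\{\bm x\notin\bm J_n\}$, whence
\begin{align*}
A_{n2}(s,\bm z)-T_n(s,\bm z)=-R_n(s,\bm z),\qquad R_n(s,\bm z):=\frac1{\sqrt n}\sum_{i=1}^{\lf ns\rf}U_iI\{\bm X_i\le\bm z\}I\{\bm X_i\notin\bm J_n\},
\end{align*}
and it suffices to prove $\sup_{s\in[0,1],\,\bm z\in\R^d}|R_n(s,\bm z)|=o_P(1)$. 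The underlying reason this works is that $\eps_n:=E[\si^2(\bm X_1)I\{\bm X_1\notin\bm J_n\}]\to0$: by \textbf{(U)} one has $E[\si^2(\bm X_1)]\le\int\bar c\,dF\le M<\infty$, while $I\{\bm X_1\notin\bm J_n\}\to0$ a.s.\ because $c_n\to\infty$, so dominated convergence applies.

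First I would dispose of the sequential index $s$ cheaply by a martingale argument. For fixed $\bm z$ the summands $\xi_{n,i}:=U_iI\{\bm X_i\le\bm z\}I\{\bm X_i\notin\bm J_n\}$ satisfy $E[\xi_{n,i}|\cF^i]=0$ (the indicator factors are $\cF^i$-measurable and $E[U_i|\cF^i]=0$ by \textbf{(U)}), so their partial sums form a martingale with respect to $(\cF^{i+1})_i$. Since $s\mapsto R_n(s,\bm z)$ is piecewise constant with jumps only at $s=k/n$, Doob's $L^2$-maximal inequality and orthogonality of martingale differences give, using \textbf{(G)} and $E[U_i^2|\bm X_i]=\si^2(\bm X_i)$,
\begin{align*}
E\Big[\sup_{s\in[0,1]}|R_n(s,\bm z)|^2\Big]=\frac1nE\Big[\max_{0\le k\le n}\Big|\sum_{i=1}^k\xi_{n,i}\Big|^2\Big]\le\frac4n\sum_{i=1}^nE[\xi_{n,i}^2]\le4\,E[\si^2(\bm X_1)I\{\bm X_1\notin\bm J_n\}]=4\eps_n,
\end{align*}
a bound independent of $\bm z$ that tends to $0$; hence $\sup_{s}|R_n(s,\bm z)|=o_P(1)$ for each fixed $\bm z$.

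It remains to make this uniform over $\bm z$. The class $\{\bm x\mapsto I\{\bm x\le\bm z\}:\bm z\in\R^d\}$ is VC, hence so is $\mathcal G_n:=\{(\bm x,u)\mapsto uI\{\bm x\le\bm z\}I\{\bm x\notin\bm J_n\}:\bm z\in\R^d\}$, whose envelope $G_n(\bm x,u)=|u|I\{\bm x\notin\bm J_n\}$ satisfies $\|G_n\|_{L^2}^2=\eps_n\to0$. I would run a chaining argument over a finite $\eps_n$-net of $\mathcal G_n$: at each net point the $s$-supremum is bounded as above, while the increments between neighbouring net points are again martingale differences in $i$ and are controlled by a Bernstein/Rosenthal-type moment inequality that exploits the exponential mixing rate of \textbf{(G)} and the higher moments of $U_i$ in \textbf{(U)} to absorb the logarithmic cost of the net; this yields $\sup_{s,\bm z}|R_n(s,\bm z)|=O_P(\sqrt{\eps_n})=o_P(1)$. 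Equivalently, the conclusion follows from the stochastic-equicontinuity part of the weak-convergence machinery for sequential marked empirical processes of $\alpha$-mixing sequences used in the proof of Theorem \ref{decomposition}(ii) (cf.\ \cite{Mohr2017}), applied to the triangular array of classes $\mathcal G_n$ whose intrinsic-semimetric diameter shrinks to zero.

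The main obstacle is exactly this last step: for i.i.d.\ data, passing from the pointwise estimate to a uniform one over a shrinking VC class is a routine maximal inequality, but under weak dependence the chaining must be redone with moment inequalities valid for $\alpha$-mixing (or martingale) sums, and one has to check that the exponential mixing of \textbf{(G)} keeps the chaining loss subpolynomial in $n$, since the rate at which $\eps_n\to0$ is not quantified (the lower growth of $c_n$ in \textbf{(J)} being arbitrary).
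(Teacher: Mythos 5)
Your reduction is exactly the paper's: both proofs reduce the claim to $\sup_{s,\bm{z}}\bigl|n^{-1/2}\sum_{i=1}^{\lf ns\rf}U_iI\{\bm{X}_i\le\bm{z}\}I\{\bm{X}_i\notin[-c_n,c_n]^d\}\bigr|=o_P(1)$ (the paper's display (\ref{eq:Teil mit om})), and the driving fact is the same in both: the $L_p(P)$-norm of the summand tends to zero by dominated convergence since $c_n\to\infty$. The differences lie in how the supremum is handled. Your fixed-$\bm{z}$ Doob/martingale step is correct (the summands are indeed martingale differences with respect to $(\cF^{i+1})_i$ by assumption \textbf{(U)}) but is not present in the paper, which treats $s$ and $\bm{z}$ simultaneously. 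For the uniform step the paper does not redo any chaining: it applies the asymptotic stochastic equicontinuity of the sequential empirical process (Theorem 2.5 of \cite{Mohr2017}) to the \emph{fixed} class $\cF=\{(u,\bm{x})\mapsto uI\{\bm{x}\le\bm{z}\}I\{\bm{x}\notin[-a,a]^d\}:\bm{z}\in\R^d,\ a\in\R_+\}$, in which the truncation level $a$ is an index of the class rather than a function of $n$; the functions $\ph_n$ then converge to $0$ in the intrinsic semimetric $\|\cdot\|_{L_{Q(2+\gamma)/2}(P)}$ uniformly in $\bm{z}$, and the equicontinuity modulus at scale $\delta_n$ yields the conclusion. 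This device is what makes your ``equivalently'' sentence rigorous: as you formulate it, the equicontinuity would have to be applied to a triangular array of shrinking classes $\mathcal{G}_n$, for which no result is available (the paper itself remarks on the absence of such triangular-array theorems), and the Bernstein/Rosenthal chaining under mixing that you sketch as the alternative is precisely the hard part you leave open. Note also the price of the paper's trick: enlarging the index set to include $a$ inflates the bracketing number to $O(\ep^{-2(d+1)})$ instead of $O(\ep^{-2d})$, which is exactly why assumption \textbf{(U)} requires $Q>(d+1)(2+\gamma)$ — a point your proposal does not engage with.
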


%Lemma: Teil mit m
\begin{Lemma} \label{Lemma:Teil mit m}
Under the assumptions of Theorem \ref{decomposition} (i) and under $H_0$
\begin{align*}
\frac{1}{\sqrt{n}}\sum\limits_{i=1}^{n}\int_{(-\bm{\infty},\bm{z}]}\left(m(\bm{y})-m(\bm{X}_i)\right)K_{h_n}(\bm{y}-\bm{X}_i)\om_n(\bm{y})\frac{f(\bm{y})}{\hat{f}_n(\bm{y})}d\bm{y}=o_P(1)
\end{align*}
holds uniformly in $\bm{z}\in\R^d$.
\end{Lemma}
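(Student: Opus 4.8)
The plan is to turn the left-hand side into $\sqrt n$ times an integral over $\bm{y}$ and reduce to a uniform rate for a difference of two kernel estimators. Using the elementary identity $n^{-1}\sum_{i=1}^n(m(\bm{y})-m(\bm{X}_i))K_{h_n}(\bm{y}-\bm{X}_i)=m(\bm{y})\hat f_n(\bm{y})-\hat g_n(\bm{y})$ with $\hat g_n(\bm{y})=n^{-1}\sum_{i=1}^n m(\bm{X}_i)K_{h_n}(\bm{y}-\bm{X}_i)$ the kernel estimator of $mf$, one writes the quantity in the lemma as $\sqrt n\int_{(-\bm{\infty},\bm{z}]}\om_n(\bm{y})\frac{f(\bm{y})}{\hat f_n(\bm{y})}\big(m(\bm{y})\hat f_n(\bm{y})-\hat g_n(\bm{y})\big)\,d\bm{y}$. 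Bounding $I\{\bm{y}\le\bm{z}\}\le 1$ removes the $\bm{z}$-dependence, so uniformity in $\bm{z}$ comes for free; $\om_n$ restricts the integral to $\bm{J}_n$, of volume $(2c_n)^d=O(\log n)$ by \textbf{(J)}; and on $\bm{J}_n$ the bound $\inf f\ge\delta_n^{-1}$ from \textbf{(F1)}, together with $\sup f<\infty$ from \textbf{(M)} and uniform consistency of $\hat f_n$ on $\bm{J}_n$ (as in Lemma \ref{Raten kernel}, from Theorem 2 of \cite{Hansen2008726} and \textbf{(B1)}), gives $\sup_{\bm{y}\in\bm{J}_n}f(\bm{y})/\hat f_n(\bm{y})=O_P(\delta_n)$. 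It therefore suffices to show $\sqrt n\,(\log n)\,\delta_n\,\sup_{\bm{y}\in\bm{J}_n}|m(\bm{y})\hat f_n(\bm{y})-\hat g_n(\bm{y})|=o_P(1)$.

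For the supremum I would split $m\hat f_n-\hat g_n$ into a deterministic bias part and a centered part. For $\bm{y}\in\bm{J}_n$ the kernel $K_{h_n}(\bm{y}-\cdot)$ is supported in $\bm{y}+[-Ch_n,Ch_n]^d\subseteq\bm{I}_n$, so only $\bm{X}_i\in\bm{I}_n$ contribute and the derivative bounds $p_n,q_n$ of \textbf{(F1)} apply. The bias equals $\int(m(\bm{y})-m(\bm{y}-h_n\bm{v}))f(\bm{y}-h_n\bm{v})K(\bm{v})\,d\bm{v}$; expanding the integrand in $h_n\bm{v}$ about $\bm{v}=0$ (where it vanishes), the contributions of orders $1,\dots,r-1$ disappear because $\int K(\bm{v})\bm{v}^{\bm{k}}d\bm{v}=0$ for $1\le|\bm{k}|\le r-1$ by \textbf{(K)} — this is precisely where a naive estimate of size $h_nq_n$ per term would be too weak. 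The Taylor remainder is of order $h_n^r$ times a bound on the $r$-th derivatives of $(m(\bm{y})-m(\cdot))f(\cdot)$ on $\bm{I}_n$, hence $O(h_n^r p_n q_n)$, uniformly in $\bm{y}\in\bm{J}_n$.

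For the centered part put $g_{\bm{y}}(\bm{x})=(m(\bm{y})-m(\bm{x}))K_{h_n}(\bm{y}-\bm{x})$. On the (measure $O(h_n^d)$) support one has $|g_{\bm{y}}|=O(h_n^{1-d}q_n)$ by the Lipschitz bound on $m$, so $\Var(g_{\bm{y}}(\bm{X}_1))=O(h_n^{2-d}q_n^2)$, and $\{g_{\bm{y}}:\bm{y}\in\bm{J}_n\}$ is Lipschitz in $\bm{y}$ by the Lipschitz property of $K$ in \textbf{(K)}. An exponential inequality for $\alpha$-mixing sequences (Theorem 2 of \cite{Hansen2008726}, applicable thanks to the geometric mixing \textbf{(G)}) combined with a chaining argument over $\bm{y}\in\bm{J}_n$ then yields $\sup_{\bm{y}\in\bm{J}_n}|n^{-1}\sum_i g_{\bm{y}}(\bm{X}_i)-E g_{\bm{y}}(\bm{X}_1)|=O_P\big(h_n q_n\sqrt{\log n/(nh_n^d)}\big)$; the extra factor $h_n$ compared with $\sup|\hat f_n-f|$ reflects the smallness of $g_{\bm{y}}$ and is what makes the argument close.

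Combining, the target expression is $O_P\big((\log n)\delta_n\sqrt n\,h_n^r p_n q_n\big)+O_P\big((\log n)^{3/2}\delta_n q_n\sqrt{h_n^{2-d}}\big)$, and both terms are $o(1)$ under \textbf{(B1)}--\textbf{(B2)} — the first via $\sqrt n h_n^r p_n q_n=o(1)$ from \textbf{(B2)}, the second via $(\log n)^3 h_n q_n^2=o(1)$ together with the first condition of \textbf{(B2)}, which is designed precisely to control the $h_n^{-d}$ blow-up against $\delta_n$. I expect the main obstacle to be exactly this last step together with the uniform control of the centered part: one must genuinely exploit the $r$-th order kernel to replace the apparent $O(h_n)$ size of $m(\bm{y})-m(\bm{X}_i)$ by an $O(h_n^r)$ bias, and one must track the correct power of $h_n$ in the variance so that the stochastic remainder survives multiplication by $\sqrt n(\log n)\delta_n$; by contrast, the truncations by $I\{\bm{y}\le\bm{z}\}$ and $\om_n$ are harmless, because the moment cancellation is carried out inside the untruncated estimators $\hat f_n$ and $\hat g_n$. (An alternative to the identity is to split the $\bm{X}_i$-sum into those whose kernel support lies entirely inside $(-\bm{\infty},\bm{z}]\cap\bm{J}_n$, treated by the kernel moment conditions, and a boundary shell of width $O(h_n)$, treated crudely; this also works but needs a Glivenko--Cantelli-type uniform-in-$\bm{z}$ bound on the number of $\bm{X}_i$ in the shell.)
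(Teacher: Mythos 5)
Your reduction loses too much at the very first step: after bounding $I\{\bm{y}\le\bm{z}\}\le1$ and pulling $\sup_{\bm{y}\in\bm{J}_n}$ out of the $d\bm{y}$-integral, you are left with having to show $\sqrt n\,(\log n)\,\delta_n\sup_{\bm{y}\in\bm{J}_n}|m(\bm{y})\hat f_n(\bm{y})-\hat g_n(\bm{y})|=o_P(1)$, and the stochastic part of this supremum is (as you correctly compute) of order $h_nq_n\sqrt{\log n/(nh_n^d)}$. Multiplying out gives $(\log n)^{3/2}\delta_nq_nh_n^{1-d/2}$, which does not tend to zero for $d\ge2$ (for $d=2$ it equals $(\log n)^{3/2}\delta_nq_n$, which diverges) and is not implied by \textbf{(B1)}--\textbf{(B2)} even for $d=1$: the fourth condition in \textbf{(B2)} controls $(\log n)^3h_nq_n^2$ but carries no $\delta_n^2$, and the first condition only bounds $(\log n)^3q_n^2\delta_n^2$ by a quantity of order $\sqrt{n^{1-d/(l+\eta)}h_n^d}$, which does not give the required $o(h_n^{d-2})$. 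The same stray factor $(\log n)\delta_n$ makes your bias contribution $(\log n)\delta_n\sqrt n\,h_n^rp_nq_n$ uncontrolled by the condition $\sqrt n\,h_n^rp_nq_n=o(1)$. The idea you are missing is that the $d\bm{y}$-integration must stay \emph{inside} before any suprema are taken: the summand $\ph_{\bm{z}}(\bm{X}_i)=\int_{(-\bm{\infty},\bm{z}]}(m(\bm{y})-m(\bm{X}_i))K_{h_n}(\bm{y}-\bm{X}_i)\om_n(\bm{y})\,d\bm{y}$ is uniformly $O(h_nq_n)$, because $\int K_{h_n}(\bm{y}-\bm{x})\,d\bm{y}=O(1)$ absorbs the $h_n^{-d}$ normalization, whereas your pointwise object $m(\bm{y})\hat f_n(\bm{y})-\hat g_n(\bm{y})$ retains it.

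The paper accordingly proceeds along a different route: it first shows that replacing $f/\hat f_n$ by $1$ costs only $o_P(1)$, and then treats $n^{-1/2}\sum_{i}\ph_{\bm{z}}(\bm{X}_i)$ as an empirical process indexed by the class $\cF_{n,1}=\{\ph_{\bm{z}}:\bm{z}\in\R^d\}$. Uniformity in $\bm{z}$ is obtained by bracketing this class with $O(\ep_n^{-d})$ brackets and applying Liebscher's exponential inequality for strongly mixing sequences to the centered sums, exploiting precisely the $O(h_nq_n)$ bound on the summands; the mean $\sqrt n\int\ph_{\bm{z}}\,dP$ is then handled exactly as your bias term, via the order-$r$ kernel and a Taylor expansion of $m$ and $f$, giving $O(\sqrt n\,h_n^rp_nq_n)=o(1)$ by \textbf{(B2)}. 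Your bias computation and your use of the kernel moment conditions are correct and coincide with the paper's treatment of the mean; it is the stochastic part where your decomposition fails to close.
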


%Lemma: Teil mit U
\begin{Lemma} \label{Lemma:Teil mit U}
Under the  assumptions of Theorem \ref{decomposition} (i) and under $H_0$
\begin{align*}
\frac{1}{\sqrt{n}}\sum\limits_{i=1}^{n}U_i\left(\int_{(-\bm{\infty},\bm{z}]}K_{h_n}(\bm{y}-\bm{X}_i)\om_n(\bm{y})\frac{f(\bm{y})}{\hat{f}_n(\bm{y})}d\bm{y}-\om_n(\bm{X}_i)I\{\bm{X}_i\le\bm{z}\}\right)=o_P(1)
\end{align*}
holds uniformly in $\bm{z}\in\R^d$.
\end{Lemma}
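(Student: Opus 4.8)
\emph{Plan.} Write $\psi_n(\bm x,\bm z):=\int_{(-\bm\infty,\bm z]}K_{h_n}(\bm y-\bm x)\om_n(\bm y)\frac{f(\bm y)}{\hat f_n(\bm y)}\,d\bm y$, so that the asserted quantity is $R_n(\bm z)=\frac1{\sqrt n}\sum_{i=1}^n U_i\big(\psi_n(\bm X_i,\bm z)-\om_n(\bm X_i)I\{\bm X_i\le\bm z\}\big)$ and I must show $\sup_{\bm z}|R_n(\bm z)|=o_P(1)$. Two structural features will be used throughout. First, by \textbf{(K)} the kernel has support $[-C,C]^d$ and integrates to one, so $\int_{(-\bm\infty,\bm z]}K_{h_n}(\bm y-\bm x)\,d\bm y$ equals $1$ when $x_j\le z_j-Ch_n$ for all $j$, equals $0$ when $x_j\ge z_j+Ch_n$ for some $j$, and lies in $[-\|K\|_{L^1},\|K\|_{L^1}]$ otherwise; hence the kernel-smoothed indicator differs from the plain one only by a bounded amount supported on the $O(h_n)$-strip $B_n(\bm z)$ around $\{\bm x\le\bm z\}$ and around $\partial\bm J_n$. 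Second, $U_i$ is a martingale difference with $E[U_i\mid\cF^i]=0$ and $\cF^i\supseteq\sigma(\bm X_i)$, so any factor that is $\cF^i$-measurable can be pulled through a conditional expectation; consequently, in $L^2$-computations of $\frac1{\sqrt n}\sum_iU_ia_i$ with $\cF^i$-measurable $a_i$, all cross terms vanish and only $\frac1n\sum_iE[U_i^2a_i^2]=E[\sigma^2(\bm X_1)a_1^2]$ survives.

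\emph{Step 1: replacing $f/\hat f_n$ by $1$.} On the event $\{\inf_{\bm x\in\bm I_n}\hat f_n(\bm x)\ge\tfrac12\delta_n^{-1}\}$, whose probability tends to one by uniform consistency of $\hat f_n$ on $\bm I_n$ (a by-product of the arguments behind Lemma \ref{Raten kernel}), I expand $\frac{f}{\hat f_n}-1=-\frac{\hat f_n-f}{f}+\frac{(\hat f_n-f)^2}{f\hat f_n}$. The contribution of the quadratic term is bounded uniformly in $\bm z$ by $\|K\|_{L^1}\,O_P(\delta_n)\,(\sup_{\bm I_n}|\hat f_n-f|)^2\cdot\frac1{\sqrt n}\sum_i|U_i|=O_P(\sqrt n\,\delta_n^2\eps_n^2)$ with $\eps_n$ the uniform rate of $\hat f_n$, which is $o_P(1)$ by \textbf{(B1)}. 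In the linear term insert $\hat f_n=\frac1n\sum_jK_{h_n}(\cdot-\bm X_j)$ and split $\hat f_n-f=(E\hat f_n-f)+(\hat f_n-E\hat f_n)$. The deterministic bias $E\hat f_n-f$ is $O(h_n^rp_n)$ on $\bm I_n$ by \textbf{(K)} and \textbf{(F1)}, so it produces $\frac1{\sqrt n}\sum_iU_ig_n(\bm X_i,\bm z)$ with $g_n(\cdot,\bm z)$ deterministic, hence $\cF^i$-measurable, and $\sup|g_n|=O(h_n^rp_n\delta_n)$; by feature two its second moment is $O(h_n^{2r}p_n^2\delta_n^2)=o(1)$ by \textbf{(B1)}, and uniformity in $\bm z$ follows exactly as in Step 3. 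The remaining piece is a degenerate second-order term $\frac1{n^{3/2}}\sum_{i\ne j}U_i\int_{(-\bm\infty,\bm z]}K_{h_n}(\bm y-\bm X_i)K_{h_n}(\bm y-\bm X_j)\om_n(\bm y)f(\bm y)^{-1}\,d\bm y$ plus a diagonal remainder of order $O_P(\delta_n/(\sqrt nh_n^d))$; the $U$-process is handled by bounding its variance directly, using the exponential mixing of \textbf{(G)} for the covariance terms and the high moments of \textbf{(U)} for the diagonal, with \textbf{(B1)}, \textbf{(B2)} rendering everything $o_P(1)$.

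\emph{Steps 2 and 3: the main term and uniformity.} After Step 1 it remains to show $\tilde R_n(\bm z):=\frac1{\sqrt n}\sum_iU_i\big(\psi_n^{(0)}(\bm X_i,\bm z)-\om_n(\bm X_i)I\{\bm X_i\le\bm z\}\big)=o_P(1)$ uniformly, where $\psi_n^{(0)}(\bm x,\bm z)=\int_{(-\bm\infty,\bm z]}K_{h_n}(\bm y-\bm x)\om_n(\bm y)\,d\bm y$. By feature one the $i$-th summand factor is bounded by $1+\|K\|_{L^1}$, vanishes unless $\bm X_i\in B_n(\bm z)$, and is $\cF^i$-measurable, so by feature two, for fixed $\bm z$,
\[ E[\tilde R_n(\bm z)^2]=E\big[\sigma^2(\bm X_1)\big(\psi_n^{(0)}(\bm X_1,\bm z)-\om_n(\bm X_1)I\{\bm X_1\le\bm z\}\big)^2\big]\le(1+\|K\|_{L^1})^2\int_{B_n(\bm z)}\sigma^2(\bm u)f(\bm u)\,d\bm u. \]
Since $\sigma^2f$ is integrable by \textbf{(U)} (indeed bounded by \textbf{(M)} under $H_0$) and $B_n(\bm z)$ is a union of $d$ slabs of width $O(h_n)$ through $\bm J_n=[-c_n,c_n]^d$ plus an $O(h_n)$-neighbourhood of $\partial\bm J_n$, uniform continuity of the distribution function of the finite measure $\sigma^2f\,d\bm u$ together with $h_nc_n^{d-1}\to0$ gives $\sup_{\bm z}\int_{B_n(\bm z)}\sigma^2f=o(1)$, so $\tilde R_n(\bm z)\to0$ in $L^2$ for each $\bm z$. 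For uniformity I restrict to $\bm z\in[-c_n,c_n]^d$ (elsewhere $\om_n$ trivialises things), lay down a grid $\mathcal G_n$ of mesh $\gamma_n=o((nc_n^{d-1})^{-1})$, so $|\mathcal G_n|$ is polynomial in $n$ up to logarithmic factors from \textbf{(J)}; the oscillation of $\tilde R_n$ inside a cell is controlled by monotonicity of $\bm z\mapsto\psi_n^{(0)}(\bm x,\bm z)$ and of $\bm z\mapsto I\{\bm x\le\bm z\}$ in each coordinate, taking expectations and Cauchy--Schwarz ($E[|U_1|I_A]\le(E\sigma^2(\bm X_1))^{1/2}P(\bm X_1\in A)^{1/2}$, $P(\bm X_1\in A)=O((\gamma_n+h_n)c_n^{d-1})$); and $\max_{\bm z\in\mathcal G_n}|\tilde R_n(\bm z)|$ is handled by a Rosenthal-type inequality for the martingale differences $U_i(\psi_n^{(0)}(\bm X_i,\bm z)-\om_n(\bm X_i)I\{\bm X_i\le\bm z\})$ with a large even moment $p$ (available since \textbf{(U)} provides moments far beyond order $2(d+1)$) and a union bound.

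\emph{Main obstacle.} The delicate point is obtaining control over all $\bm z\in\R^d$ simultaneously while the full-sample estimator $\hat f_n$ sits in the denominator, so that $\psi_n(\bm X_i,\bm z)$ is neither $\cF^i$-adapted nor a simple function of $\bm z$. The argument must therefore be decoupled into an adapted part, where the martingale-difference property of $(U_i)$ annihilates all covariance terms and a chaining/union-bound over a $\bm z$-grid (fuelled by the strong moment assumption \textbf{(U)}) delivers uniformity, and a genuinely degenerate second-order $U$-process part, whose variance has to be estimated via the exponential mixing \textbf{(G)}. Pushing every stochastic order below a negative power of $n$ is exactly what the intricate bandwidth conditions \textbf{(B1)}--\textbf{(B2)} are calibrated to guarantee.
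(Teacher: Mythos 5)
Your overall architecture (first remove the random denominator $f/\hat f_n$, then control the remaining process of $U_i$ times the difference between the smoothed and the plain indicator, uniformly in $\bm z$) matches the paper's, and your Steps 2--3 are a legitimate alternative to what the paper does there: the paper truncates $U_i$ at $n^{1/q}$ with $q=Q\tfrac{2+\gamma}{2}$, recentres, covers the class $\cF_{n,2}$ by $O(\ep_n^{-d})$ brackets whose $L_1(P)$-size is small precisely because the two indicators differ only on an $O(h_n)$-slab, and applies Liebscher's Bernstein-type exponential inequality for strongly mixing sequences plus a union bound; you instead exploit the martingale-difference property of $(U_i)$ to kill all cross terms, use a Rosenthal inequality and a grid. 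Both routes are workable, though note that the paper's choice of a Bernstein/mixing inequality sidesteps the need to verify a Rosenthal-type bound for the conditional quadratic variation, and your grid-plus-moments union bound does need the large moment order supplied by \textbf{(U)} to beat the polynomial cardinality of $\mathcal G_n$ --- you correctly flag this.

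The genuine gap is in your Step 1. After expanding $\tfrac{f}{\hat f_n}-1=-\tfrac{\hat f_n-f}{f}+\tfrac{(\hat f_n-f)^2}{f\hat f_n}$ you are left with a degenerate second-order term $\tfrac{1}{n^{3/2}}\sum_{i\ne j}U_i\int_{(-\bm\infty,\bm z]}K_{h_n}(\bm y-\bm X_i)\bigl(K_{h_n}(\bm y-\bm X_j)-E[K_{h_n}(\bm y-\bm X_j)]\bigr)\om_n(\bm y)f(\bm y)^{-1}d\bm y$, and you propose to "bound its variance directly." A variance bound is pointwise in $\bm z$; the lemma requires $\sup_{\bm z}$, and this term is not monotone in $\bm z$, so you would need a second discretization/chaining argument here that you do not provide. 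Moreover, the variance of a degenerate second-order form over $\alpha$-mixing (not independent) data requires fourth-order covariance/cumulant estimates that are far from one line. The paper avoids this entirely: it observes that $\tfrac1n\sum_iU_iK_{h_n}(\bm y-\bm X_i)=\hat f_n(\bm y)\,\hat m^U_n(\bm y)$, where $\hat m^U_n$ is the Nadaraya--Watson estimator built from $(U_t,\bm X_t)$ of the regression function $E[U_t\mid\bm X_t]\equiv 0$, so the whole $f/\hat f_n$-replacement term equals $\sqrt n\int\hat m^U_n(\bm y)\,(f-\hat f_n)(\bm y)\,\om_n(\bm y)I\{\bm y\le\bm z\}d\bm y$ and is bounded, \emph{uniformly in} $\bm z$ for free, by $\sqrt n\,(2c_n)^d\sup_{\bm y\in\bm J_n}|\hat m^U_n(\bm y)|\sup_{\bm y\in\bm J_n}|f(\bm y)-\hat f_n(\bm y)|$, which the uniform rates of Lemma \ref{Raten kernel} and \textbf{(B1)}--\textbf{(B2)} render $o_P(1)$. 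You should either adopt this device or supply the missing uniformity and mixing-variance arguments for your U-process; as written, Step 1 does not close.
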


\subsection{Proof of Theorem \ref{decomposition}(ii)}\label{Appendix-proof}

 For the proof of the second part of Theorem \ref{decomposition} we use a recent result on weak convergence of sequential empirical processes indexed in function classes that can be applied for strongly mixing sequences, see \cite{Mohr2017}. It is stated in Lemma \ref{Mohr2017} and uses the following notion of bracketing number.

%(neue) Definition bracketing number 
\begin{Definition}[Bracketing number]\label{brackets}
Let $\cX$ be a measure space, $\cF$ some class of functions $\cX\to\R$ and $\rho$ some semi norm on $\cF$. For all $\ep>0$, let $N=N(\ep)$, be the smallest integer, for which there exist a class of functions $\cX\to\R$, denoted by $\cB$ and called \textit{bounding class} and a function class $\cA \subset \cF$ called \textit{approximating class} such that $|\cB|=|\cA|=N$, $\rho(b)<\ep,\ \forall \ b\in\cB$ and for all $\ph\in\cF$ there exist  $a^*\in\cA$ and  $b^*\in\cB$ such that $|\ph-a^*|\le b^*$. Then $N(\ep)$ is called the \textit{bracketing number} and denoted by $\tilde{N}_{[~]}(\ep,\cF,\rho)$. 
%The function $a^*$ is referred to as the \textit{(to $\ph$) corresponding approximating function} and the function $b^*$ as the \textit{(to $\ph$) corresponding bounding function}. 
\end{Definition}

Note that the usual notion for bracketing number (as in Definition 2.1.6 in \cite{vanderVaart1996}) will be referred to as $N_{[~]}(\ep,\cF,\rho)$. 

\begin{Lemma}[Corollary 2.7 in \cite{Mohr2017}]\label{Mohr2017}
Let $\{X_t:t\in\Z\}$ be a strictly stationary sequence of random variables with values in some measure space $\cX$. Let $\cF$ be a class of measurable functions $\cX\to\R$. Let furthermore the following assumptions hold.
\begin{enumerate}
\item[\textbf{(A1)}] Let $\{X_t:t\in\Z\}$ be strongly mixing, such that $\sum_{t=1}^{\infty}t^{Q-2}\alpha(t)^{\gamma/(2+\gamma)}<\infty$ for some $\gamma>0$ and even $Q> 2$.
\item[\textbf{(A2)}] For $Q$ and $\gamma$ from assumption \textbf{(A1)} let $\int_{0}^{1}x^{-\gamma/(2+\gamma)}(\tilde{N}_{[~]}(x,\cF,\|\cdot\|_{L_2(P)}))^{1/Q}dx<\infty$, where $X_1\sim P$. Furthermore, assume that each $\ep>0$ allows for a choice of bounding class $\cB$, such that $E\left[|b(X_{1})|^{i(2+\gamma)/2}\right]^{1/2}\le\ep$ for all $b\in\cB$ and for all $i=2,\dots,Q$.
\item[\textbf{(A3)}] Let $\cF$ possess an envelope function $F$, with $E[|F(X_1)|^Q]<\infty$ and let there exist a constant $L<\infty$, such that $\sup_{\ph\in\cF}E\left[ |\ph(X_1)|^{Q(2+\gamma)/2}\right]\le L$.
\end{enumerate} 
Furthermore, let for all $K\in\N$ and all finite collections $\ph_k\in\cF$, $s_k\in[0,1]$, $k=1,\dots,K$, $\left(G_n(s_k,\ph_k)\right)_{k=1,\dots,K}\overset{\cD}{\rightarrow} \left(G(s_k,\ph_k)\right)_{k=1,\dots,K}$, where $G_n(s,\ph)=n^{-1/2}\sum_{i=1}^{\lf ns \rf} (\ph(X_i)-E[\ph(X_i)])$ for $s\in[0,1],\ph\in\cF$ and $G=\{G(s,\ph):s\in[0,1],\ph\in\cF\}$ is a centered Gaussian process. \\

\noindent Then $\left\{G_n(s,\ph):s\in[0,1],\ph\in\cF\right\}$ converges weakly to $G$ in $\ell^{\infty}([0,1]\times \cF)$.
\end{Lemma}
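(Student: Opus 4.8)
The plan is to prove the weak convergence by the classical two-step programme for $\ell^{\infty}$-valued processes: since finite-dimensional convergence $\left(G_n(s_k,\ph_k)\right)_k\overset{\cD}{\to}\left(G(s_k,\ph_k)\right)_k$ is assumed, it suffices to establish asymptotic tightness of $\{G_n(s,\ph):s\in[0,1],\ph\in\cF\}$ over the product index set, which I would obtain through asymptotic equicontinuity. I would equip $[0,1]\times\cF$ with the semimetric
\begin{equation*}
\rho\big((s,\ph),(s',\ph')\big)^2 = |s-s'| + \big\|\ph-\ph'\big\|_{L_2(P)}^2 ,
\end{equation*}
observe that $([0,1]\times\cF,\rho)$ is totally bounded (compactness of $[0,1]$ together with finiteness of the entropy integral in \textbf{(A2)} and the envelope condition in \textbf{(A3)}), and then show that for every $\eta>0$,
\begin{equation*}
\lim_{\delta\downarrow 0}\ \limsup_{n\to\infty}\ P^*\Big(\sup_{\rho((s,\ph),(s',\ph'))<\delta}\big|G_n(s,\ph)-G_n(s',\ph')\big|>\eta\Big)=0,
\end{equation*}
where $P^*$ denotes outer probability. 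Given this modulus-of-continuity statement, weak convergence in $\ell^{\infty}([0,1]\times\cF)$ to the centered Gaussian process $G$ follows from the standard characterisation of tightness for bounded stochastic processes.

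The analytic engine is a Rosenthal-type moment inequality for partial sums of strongly mixing variables. For a single measurable $g$ with $E[g(X_1)]=0$, the summability $\sum_t t^{Q-2}\alpha(t)^{\gamma/(2+\gamma)}<\infty$ of \textbf{(A1)}, combined with the higher-moment bound $E[|g(X_1)|^{Q(2+\gamma)/2}]\le L$ of \textbf{(A3)} and the evenness of $Q$, is exactly what guarantees a finite constant $C=C(Q,\gamma,L,\alpha)$ with $E|\sum_{i\le n}g(X_i)|^{Q}\le C\big[(nE[g(X_1)^2])^{Q/2}+r_n\big]$, $r_n$ collecting lower-order terms. Applied on a time block to $g=\ph$ this yields, after dividing by $n^{Q/2}$, the sequential increment estimate
\begin{equation*}
E\big|G_n(s,\ph)-G_n(s',\ph)\big|^{Q}\le C\,|s-s'|^{Q/2}\big(E[\ph(X_1)^2]\big)^{Q/2}+r_n,
\end{equation*}
whose exponent $Q/2>1$ is precisely the Kolmogorov--Chentsov moment criterion for tightness of partial-sum processes in the $s$-direction. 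Applied to $g=\ph-\ph'$ it controls the function-class increment $E|G_n(1,\ph)-G_n(1,\ph')|^{Q}$ by a constant times $\|\ph-\ph'\|_{L_2(P)}^{Q}$. A maximal version over the partial sums $k=1,\dots,\lf ns\rf$, obtained by a M\'{o}ricz-type blocking argument, upgrades these to bounds on $\sup_{k\le n}|\cdots|$ at the cost of only logarithmic factors absorbed by \textbf{(A2)}.

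With this maximal moment bound in hand I would run a chaining argument over $\cF$ using the modified bracketing numbers $\tilde N_{[~]}$ of Definition \ref{brackets}. The nonstandard bracketing is tailored for this: at each level a generic $\ph$ is approximated by some $a^*\in\cA$ with $|\ph-a^*|\le b^*$ and $\rho(b^*)<\ep$, and the bounding-function moments required in \textbf{(A2)}, namely $E[|b(X_1)|^{i(2+\gamma)/2}]^{1/2}\le\ep$ for $i=2,\dots,Q$, are exactly the quantities entering the Rosenthal bound, so that the bracket remainders are rendered uniformly negligible. Finiteness of $\int_0^1 x^{-\gamma/(2+\gamma)}\big(\tilde N_{[~]}(x,\cF,\|\cdot\|_{L_2(P)})\big)^{1/Q}dx$ then makes the chaining sum converge, giving equicontinuity in the $\ph$-direction uniformly in $s$. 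Splitting a generic increment as $G_n(s,\ph)-G_n(s',\ph')=[G_n(s,\ph)-G_n(s',\ph)]+[G_n(s',\ph)-G_n(s',\ph')]$ and combining the time modulus with the function-class modulus along $\rho$ delivers the joint asymptotic equicontinuity, completing the argument.

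The main obstacle is the maximal moment inequality for the \emph{sequential} empirical process of mixing variables applied \emph{uniformly over the bracket class}, together with the bookkeeping needed to feed the bounding-function moments of \textbf{(A2)} into the Rosenthal bound at every chaining level while keeping the constants uniform in $n$ and across brackets. Making the partial-sum maximum (the $s$-direction) and the function-class chaining (the $\ph$-direction) cooperate, so that a single $\delta$-ball in $\rho$ controls both simultaneously, is the delicate point; the choice of the modified bracketing number $\tilde N_{[~]}$, which lets the bounding class be adapted to the whole collection rather than to individual pairs, is precisely the device that reconciles these two strands.
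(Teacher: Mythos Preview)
The paper does not contain a proof of this lemma: it is quoted as Corollary~2.7 of \cite{Mohr2017} and invoked as an external tool in the proof of Theorem~\ref{decomposition}(ii). There is therefore no in-paper argument to compare your proposal against.

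That said, your outline is a reasonable sketch of the expected proof architecture. The two-step programme (finite-dimensional convergence plus asymptotic equicontinuity in $\ell^\infty$), a Rosenthal-type $Q$th-moment inequality for strongly mixing partial sums to control increments in both the $s$- and the $\ph$-direction, and a chaining argument over $\cF$ via the modified bracketing numbers $\tilde N_{[~]}$ are precisely the ingredients the shape of assumptions \textbf{(A1)}--\textbf{(A3)} points to: the summability in \textbf{(A1)} is the standard hypothesis for the mixing Rosenthal inequality at power $Q$; the entropy integral with the extra factor $x^{-\gamma/(2+\gamma)}$ in \textbf{(A2)} is the form that arises when covariance inequalities for mixing variables introduce $\alpha$-dependent constants; and the higher-moment control on bounding functions in \textbf{(A2)} is what lets the bracket remainders be handled at each chaining level.

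One caution: your displayed bound $E\big|\sum_i g(X_i)\big|^Q \le C\big[(nE[g(X_1)^2])^{Q/2}+r_n\big]$ is too optimistic as stated. Under $\alpha$-mixing the sharp inequalities (Rio, Yokoyama, Doukhan--Portal) involve not only the variance but the $L_{Q(2+\gamma)/2}$-norm of $g$ and the mixing coefficients explicitly, and the ``lower-order'' terms $r_n$ are not automatically negligible in the chaining---this is why \textbf{(A2)} imposes moment bounds on the bounding class for \emph{all} $i=2,\dots,Q$, not just $i=2$. Making those extra moments propagate correctly through the chaining levels, and combining the maximal inequality in $s$ with the bracketing in $\ph$ so that a single $\delta$ works, is the substantive work you have left implicit; your identification of this as the main obstacle is accurate.
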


%Beweis Theorem
\begin{proof}[Proof of Theorem \ref{decomposition}(ii)]
First notice that due to assumption \textbf{(G)} and under the null hypothesis $(U_t,\bm{X}_t)_{t\in\Z}$ is a strictly stationary sequence of random variables with values in $\R\times\R^d$. Denote by $P$ the common marginal distribution of $(U_1,\bm{X}_1)$ and define $\cF:=\{(u,\bm{x})\mapsto uI\{\bm{x}\le\bm{z}\}:\bm{z}\in\R^d\}$. The convergence of $T_n$ is then implied by the weak convergence of
\[G_n:=\Big\{G_n(s,\ph):=\frac{1}{\sqrt{n}}\sum\limits_{i=1}^{\lf ns\rf}\Big(\ph(U_i,\bm{X}_i)-\int \ph dP\Big):s\in[0,1],\ph\in\cF\Big\}\] 
in $\ell^{\infty}([0,1]\times\cF)$. We apply Lemma \ref{Mohr2017}. 
%Hence, assumptions \textbf{(A1)}, \textbf{(A2)} and \textbf{(A3)} will be verified for the process $(U_t,\bm{X}_t)_{t\in\Z}$ and the function class $\cF$, as well as the convergence of all finite dimensional distributions of $G_n$ will be shown. 
Condition \textbf{(A1)} on the mixing coefficient of $(U_t,\bm{X}_t)_{t\in\Z}$ is implied by assumption \textbf{(G)} on the mixing coefficient of $(Y_t,\bm{X}_t)_{t\in\Z}$ and the null hypothesis as measurable functions maintain mixing properties. To show condition \textbf{(A2)} on the function class $\cF$, the choice of approximating functions and bounding functions, as in Definition \ref{brackets}, will be discussed in more detail. 
%Note that the semi-norm $\rho$ and the semi-metric $d$ from Theorem 2.5 in \cite{Mohr2017} simplify to the $L_2(P)$ norm and the $L_{Q\frac{2+\gamma}{2}}(P)$ metric respectively. 
Denote with $\bar{c}$ from assumption \textbf{(U)}, $h(\bm{x})=\bar{c}(\bm{x})f(\bm{x})$ and $H(\bm{x})=\int_{(-\bm{\infty},\bm{x}]}h(\bm{t})d\bm{t}$ for $\bm{x}\in\R^d$ and for all $i=1,\dots, d$ and $x\in\R$,
%(eigentlich nur $i=2,\dots,d-1$, it is clear, how $h_1$ and $h_d$ look like)
%
\begin{align*}
h_i(x)=\int\dots\int h(x_1,\dots,x_{i-1},x,x_{i+1},\dots,x_d)dx_1\dots dx_{i-1}dx_{i+1}\dots dx_d
\end{align*}
and $H_i(x)=\int_{-\infty}^{x}h_i(t)dt$.
Let $\ep>0$ and choose for all $i=1,\dots,d$ some $N_i=N_i(\ep)\in\N$ and $-\infty=z_{0,i}<\dots<z_{N_i,i}=\infty$, such that
\begin{align}
H_i\left(z_{j_i,i}\right)-H_i\left(z_{j_i-1,i}\right)\le\frac{\ep^2}{d}, \ \forall \ j_i=1,\dots,N_i, \ i=1,\dots,d. \label{eq:brackets for 1}
\end{align}
Since $H_i$ is continuous and $H_i(-\infty)=H(-\bm{\infty})=0$ and $H_i(\infty)=H(\bm{\infty})\le M$ for $M<\infty$ from assumption \textbf{(U)}, $N_i$ can be chosen to be smaller than $2dM\ep^{-2}$ for all $i=1,\dots,d$. By using cartesian products, a partition of $\R^d$ is obtained. For simplicity reasons the following notation will be used. For $\bm{j}=(j_1,\dots,j_d)\in\N^d$ let 
$\bm{z}_{\bm{j}}:=(z_{j_1,1},\dots,z_{j_d,d})$, 
and $\bm{j-1}:=(j_1-1,\dots,j_d-1)\in\N^d$. For all $\bm{j}\in\cart_{i=1}^{d}\{1,\dots,N_i\}$ define approximating functions 
\[a_{\bm{j}}(u,\bm{x}):= uI\left\{\bm{x}\le \bm{z}_{\bm{j}}\right\}\]
and bounding functions
\[b_{\bm{j}}(u,\bm{x}):= |u|\left(I\left\{\bm{x}\le \bm{z}_{\bm{j}}\right\}-I\left\{\bm{x}\le \bm{z}_{\bm{j}-\bm{1}}\right\}\right).\]
Notice that $a_{\bm{j}}\in\cF$ while $b_{\bm{j}}\notin\cF$ for all $\bm{j}\in\cart_{i=1}^{d}\{1,\dots,N_i\}$. For each $\bm{z}\in\R^d$ there exists a $\bm{j}\in\cart_{i=1}^{d}\{1,\dots,N_i\}$ such that $\bm{z}\in(\bm{z}_{\bm{j-1}},\bm{z}_{\bm{j}}]$. Therefore for each $\ph\in\cF$ there exists a $\bm{j}\in\cart_{i=1}^{d}\{1,\dots,N_i\}$ such that 
$|\ph-a_{\bm{j}}|\le b_{\bm{j}}$.
Further for $\bm{j}\in\cart_{i=1}^{d}\{1,\dots,N_i\}$ it holds that 
\begin{align*}
\left\|b_{\bm{j}}\right\|_{L_2(P)}^2
&=E\left[|U_t|^2\left(I\left\{\bm{X}_t\le \bm{z}_{\bm{j}}\right\}-I\left\{\bm{X}_t\le \bm{z}_{\bm{j}-\bm{1}}\right\}\right)\right]
%&=E\bigg[\underbrace{E[|U_t|^2|\bm{X}_t]}_{=\si^2(\bm{X}_t) \text{ a.s.}}\left(I\left\{\bm{X}_t\le \bm{z}_{\bm{j}}\right\}-I\left\{\bm{X}_t\le \bm{z}_{\bm{j}-\bm{1}}\right\}\right)\bigg]\\
=\int_{(-\bm{\infty},\bm{z}_{\bm{j}}]\backslash (-\bm{\infty},\bm{z}_{\bm{j}-\bm{1}}]}\si^2(\bm{u})f(\bm{u})d\bm{u}\\
%&=\int_{(-\bm{\infty},\bm{z}_{\bm{j}}]}\si^2(\bm{u})f(\bm{u})d\bm{u}-\int_{(-\bm{\infty},\bm{z}_{\bm{j}-\bm{1}}]}\si^2(\bm{u})f(\bm{u})d\bm{u}\\
%&\le\int_{(-\bm{\infty},\bm{z}_{\bm{j}}]\backslash (-\bm{\infty},\bm{z}_{\bm{j}-\bm{1}}]}\bar{c}(\bm{u})f(\bm{u})d\bm{u}\\
%&=\int_{(-\bm{\infty},\bm{z}_{\bm{j}}]}\bar{c}(\bm{u})f(\bm{u})d\bm{u}-\int_{(-\bm{\infty},\bm{z}_{\bm{j}-\bm{1}}]}\bar{c}(\bm{u})f(\bm{u})d\bm{u}\\
&\leq H(\bm{z}_{\bm{j}})-H(\bm{z}_{\bm{j-1}}) \le \sum\limits_{i=1}^{d}\left(H_i\left(z_{j_i,i}\right)-H_i\left(z_{j_i-1,i}\right)\right)\le\ep^2
\end{align*}
due to \eqref{eq:brackets for 1}. Furthermore for all $i=2,\dots,Q$ by Jensen's inequality and \textbf{(U)}, it holds that
\[E\left[|U_t|^{i\frac{2+\gamma}{2}}|\bm{X}_t\right]\le E\left[|U_t|^{i\frac{2+\gamma}{2}}|\bm{X}_t\right]^{\frac{i}{Q}}\le (c(\bm{X}_t)^Q)^{\frac{i}{Q}}=c(\bm{X}_t)^i \text{ a.s.},\]
and thus 
\begin{align*}
\int|b_{\bm{j}}|^{i\frac{2+\gamma}{2}}dP
&=E\left[|U_t|^{i\frac{2+\gamma}{2}}\left(I\left\{\bm{X}_t\le \bm{z}_{\bm{j}}\right\}-I\left\{\bm{X}_t\le \bm{z}_{\bm{j}-\bm{1}}\right\}\right)\right]\\
%&=E\bigg[E\left[|U_t|^{i\frac{2+\gamma}{2}}|\bm{X}_t\right]\left(I\left\{\bm{X}_t\le \bm{z}_{\bm{j}}\right\}-I\left\{\bm{X}_t\le \bm{z}_{\bm{j}-\bm{1}}\right\}\right)\bigg]\\
%&\le\int_{(-\bm{\infty},\bm{z}_{\bm{j}}]\backslash (-\bm{\infty},\bm{z}_{\bm{j-1}}]}c(\bm{u})^if(\bm{u})d\bm{u}\\
%&\le\int_{(-\bm{\infty},\bm{z}_{\bm{j}}]\backslash (-\bm{\infty},\bm{z}_{\bm{j-1}}]}\bar{c}(\bm{u})f(\bm{u})d\bm{u}\\
%&=\int_{(-\bm{\infty},\bm{z}_{\bm{j}}]}\bar{c}(\bm{u})f(\bm{u})d\bm{u}-\int_{(-\bm{\infty},\bm{z}_{\bm{j-1}}]}\bar{c}(\bm{u})f(\bm{u})d\bm{u}\\
%&=
&\leq H(\bm{z}_{\bm{j}})-H(\bm{z}_{\bm{j-1}})\leq\ep^2. 
\end{align*}
Since $N_i=O(\ep^{-2})$ for all $i=1,\dots,d$, it holds that 
$\tilde{N}_{[~]}(\ep,\cF,\|\cdot\|_{L_2(P)})=O\left(\ep^{-2d}\right)$.
As $Q>d(2+\gamma)$ holds, assumption \textbf{(A2)} from Lemma \ref{Mohr2017} is therefore satisfied. Assumption \textbf{(A3)} is also satisfied as $\bar{F}:\R\times\R^d\to \R, \ (u,\bm{x})\mapsto u$ is an envelope function of $\cF$ such that 
\begin{align*}
\int |\bar{F}|^{Q}dP
&=E\left[|U_t|^Q\right]
\le E\left[|U_t|^{Q\frac{2+\gamma}{2}}\right]^{\frac{2}{2+\gamma}}
%&=E\left[E\left[|U_t|^{Q\frac{2+\gamma}{2}}|\bm{X}_t\right]\right]^{\frac{2}{2+\gamma}}\\
%&\le E\left[c(\bm{X}_t)^Q\right]^{\frac{2}{2+\gamma}}\\
\le \left(\int c(\bm{u})^Qf(\bm{u})d\bm{u}\right)^{\frac{2}{2+\gamma}}
<\infty,
\end{align*}
and additionally, it holds that
\begin{align*}
\sup\limits_{\ph\in\cF}\int |\ph|^{Q\frac{2+\gamma}{2}}dP
&=\sup\limits_{\bm{z}\in\R^d}E\left[|U_t|^{Q\frac{2+\gamma}{2}}I\{\bm{X}_t\le\bm{z}\}\right]
%&= E\left[|U_t|^{Q\frac{2+\gamma}{2}}\right]
\le \int c(\bm{u})^Qf(\bm{u})d\bm{u}
<\infty.
\end{align*}

What is left to show, is the convergence of all finite dimensional distributions of $T_n$. To this end we will apply Cramér-Wold's device. Let $\lambda_1,\dots,\lambda_K\in\R\setminus\{0\}$ and consider 
\begin{align*}
\sum\limits_{j=1}^K \lambda_j T_n(s_j,\bm{z}_j)
=\sum\limits_{i=1}^{n}\xi_{n,i},
\end{align*} 
where $\xi_{n,i}:=\frac{1}{\sqrt{n}}U_i\sum_{j=1}^K \lambda_jI\{\bm{X}_i\le \bm{z}_j\}I\left\{\tfrac{i}{n}\le s_j\right\}$. 
Now  Corollary 1 in \cite{Rio199535} can be applied, which is a central limit theorem for strongly mixing triangular arrays. Following the notations in \cite{Rio199535} define $V_{n,l}:=Var(\sum_{i=1}^l\xi_{n,i})$ for all $l=1,\dots,n,$ and $n\in\N$. Let furthermore $Q_{n,i}$ be the c\`{a}dl\`{a}g inverse function of $t\mapsto P(|\xi_{n,i}|>t)$, i.e.
\[Q_{n,i}(u):=\sup\{t>0: P(|\xi_{n,i}|>t)>u\}, \ \forall \ u>0,\]
with the convention that $\sup \emptyset :=0$. Let $\{\tilde{\alpha}_n(t):t\in\N\}$ be the sequence of mixing coefficients of $\{\xi_{n,i}:1\le i\le n,n\in\N\}$. For $t\in(0,\infty)$ define $\tilde{\alpha}_n(t):=\tilde{\alpha}_n(\lf t \rf)$. Let its c\`{a}dl\`{a}g inverse function be defined by
\[\tilde{\alpha}_{n}^{-1}(u):=\sup\{t>0: \tilde{\alpha}_{n}(t)>u\}, \ \forall \ u>0.\]

Condition (a) in Corollary 1 in \cite{Rio199535} is easy to verify. 
% fulfilled as
%
%\begin{align*}
%\limsup\limits_{n\to\infty}\max\limits_{1\le l\le n}\frac{V_{n,l}}{V_{n,n}}
%&=\limsup\limits_{n\to\infty}\max\limits_{1\le l\le n}\frac{\sum\limits_{j_1=1}^{K}\sum\limits_{j_2=1}^{K}\lambda_{j_1}\lambda_{j_2}\lf l s_{j_1}\rf \mi \lf ls_{j_2} \rf\Sigma(\bm{z}_{j_1}\mi \bm{z}_{j_2})}{\sum\limits_{j_1=1}^{K}\sum\limits_{j_2=1}^{K}\lambda_{j_1}\lambda_{j_2}\lf n s_{j_1}\rf \mi \lf ns_{j_2} \rf\Sigma(\bm{z}_{j_1}\mi \bm{z}_{j_2})}<\infty.
%\end{align*}
Concerning condition (b) in aforementioned corollary
%, it needs to be shown that 
%
%\begin{align}
%V_{n,n}^{-\frac{3}{2}}\sum\limits_{i=1}^{n}\int\limits_{0}^{1}\tilde{\alpha}^{-1}_{n}\left(\tfrac{x}{2}\right) Q^2_{n,i}(x)\inf\left\{\tilde{\alpha}^{-1}_{n}\left(\tfrac{x}{2}\right)Q_{n,i}(x),\sqrt{V_{n,n}}\right\}dx\to 0.\label{eq:fidis_Rio_(b)}
%\end{align}
note that by Markov's inequality, it holds that for all $t>0$ and with $q:=Q\frac{2+\gamma}{2}$
\begin{align*}
P(|\xi_{n,i}|>t)
%&=P\left(\bigg|\frac{1}{\sqrt{n}}U_i\sum\limits_{j=1}^K \lambda_jI\{\bm{X}_i\le \bm{z}_j\}I\left\{\tfrac{i}{n}\le s_j\right\}\bigg|>t\right)\\
%&\le t^{-q}E\left[\bigg|\frac{1}{\sqrt{n}}U_i\sum\limits_{j=1}^K \lambda_jI\{\bm{X}_i\le \bm{z}_j\}I\left\{\tfrac{i}{n}\le s_j\right\}\bigg|^{q}\right]\\
%&=t^{-q}n^{-\frac{q}{2}}E\Bigg[\underbrace{E[|U_i|^{q}|\bm{X}_i]}_{\le c^Q(\bm{X}_i) \text{ a.s.}}\bigg|\sum\limits_{j=1}^K \lambda_jI\{\bm{X}_i\le \bm{z}_j\}I\left\{\tfrac{i}{n}\le s_j\right\}\bigg|^{q}\Bigg]\\
%&\le t^{-q}n^{-\frac{q}{2}} \bigg(\sum\limits_{j=1}^{K}|\lambda_j|\bigg)^q \int c^Q(\bm{u})f(\bm{u})d\bm{u}\\
%&
\le t^{-q}n^{-\frac{q}{2}}\tilde{M},
\end{align*}
where $\tilde{M}:=(\sum_{j=1}^{K}|\lambda_j|)^q M$ for $M<\infty$ from assumption \textbf{(U)}. 
%Hence, for $u>0$ fixed and for all $t>0$ with $P(|\xi_{n,i}|>t)>u$, it holds that $u\le  t^{-q}n^{-\frac{q}{2}}\tilde{M}$. Solving the inequality for $t$, results in
%
%\[t\le u^{-\frac{1}{q}}n^{-\frac{1}{2}}\tilde{M}^{\frac{1}{q}}, \ \forall \ t>0 \text{ with } P(|\xi_{n,i}|>t)>u.\]
Hence, $Q_{n,i}(u)\le u^{-\frac{1}{q}}n^{-\frac{1}{2}}\tilde{M}^{\frac{1}{q}}$ holds for all $u>0$. By similar arguments, we obtain $\tilde{\alpha}_n^{-1}(u)\le \tilde{A}-\log_a(u)$ for all $u>0$, where $\tilde{A}:=\log_a(A)$.
%
%For fixed $n\in\N$, it holds that $\xi_{n,i}=g_i(U_i,\bm{X}_i)$ with 
%%
%\[g_i(u,\bm{x}):=\frac{1}{\sqrt{n}}u\sum\limits_{j=1}^K \lambda_jI\{\bm{x}\le \bm{z}_j\}I\left\{\tfrac{i}{n}\le s_j\right\},\] 
%%
%where $g_i$ is measurable for all $i\in\N$. As Bradley \cite{Bradley1985165} states, $\tilde{\alpha}_n(\cdot)$ can therefore be bounded by the mixing coefficient of $\{(U_t,\bm{X}_t):t\in\Z\}$ which has the same properties as the mixing coefficient $\alpha(\cdot)$ of $\{(Y_t,\bm{X}_t):t\in\Z\}$. From assumption \textbf{(G)} it holds for all $t>0$ that $\alpha(t)\le A a^{-t}$ for some $0<A<\infty$ and some $a\in(1,\infty)$. Analogous to before, it can be shown that
%%
%\[\tilde{\alpha}_n^{-1}(u)\le \tilde{A}-\log_a(u), \ \forall \ u>0,\] 
%%
%where $\tilde{A}:=\log_a(A)$. 
%
Furthermore, $V_{n,n}$ converges to $\sum_{j_1=1}^{K}\sum_{j_2=1}^{K}\lambda_{j_1}\lambda_{j_2}(s_{j_1}\mi s_{j_2}) \Sigma(\bm{z}_{j_1}\mi \bm{z}_{j_2})>0$. 
%
%\begin{align}
%V_{n,n}
%=\sum\limits_{j_1=1}^{K}\sum\limits_{j_2=1}^{K}\lambda_{j_1}\lambda_{j_2}\frac{\lf n s_{j_1}\rf \mi \lf ns_{j_2} \rf}{n}\Sigma(\bm{z}_{j_1}\mi \bm{z}_{j_2})\underset{n\to\infty}{\to} w^2<\infty\label{eq:fidis_03}
%\end{align}
%
%holds. 
Putting the results together, it can be obtained that
\begin{align*}
&V_{n,n}^{-\frac{3}{2}}\sum\limits_{i=1}^{n}\int_{0}^{1}\tilde{\alpha}^{-1}_{n}\left(\tfrac{x}{2}\right) Q^2_{i,n}(x)\inf\left\{\tilde{\alpha}^{-1}_{n}\left(\tfrac{x}{2}\right)Q_{i,n}(x),\sqrt{V_{n,n}}\right\}dx\\
%&\le \tilde{M}^{\frac{2}{q}} V_{n,n}^{-\frac{3}{2}}\frac{1}{n}\sum\limits_{i=1}^{n}\int\limits_{0}^{1}\left(\tilde{A}-\log_a\left(\tfrac{x}{2}\right)\right) x^{-\frac{2}{q}}\inf\left\{\left(\tilde{A}-\log_a\left(\tfrac{x}{2}\right)\right)x^{-\frac{1}{q}}n^{-\frac{1}{2}}\tilde{M}^{\frac{1}{q}},\sqrt{V_{n,n}}\right\}dx\\
&\le\frac{1}{\sqrt{n}} \tilde{M}^{\frac{2}{q}} V_{n,n}^{-\frac{3}{2}}\int_{0}^{1}\left(\tilde{A}-\log_a\left(\tfrac{x}{2}\right)\right) x^{-\frac{2}{q}}\inf\left\{\left(\tilde{A}-\log_a\left(\tfrac{x}{2}\right)\right)x^{-\frac{1}{q}}\tilde{M}^{\frac{1}{q}},\sqrt{n}\sqrt{V_{n,n}}\right\}dx
%&\frac{1}{\sqrt{n}} M^{\frac{2}{q}} V_{n,n}^{-\frac{3}{2}}\int\limits_{[0,a_n)}-\log_a(x) x^{-\frac{2}{q}}\inf\left\{-\log_a(x)x^{-\frac{1}{q}}M^{\frac{1}{q}},\sqrt{n}\sqrt{V_{n,n}}\right\}dx\\
%&+\frac{1}{\sqrt{n}} M^{\frac{2}{q}} V_{n,n}^{-\frac{3}{2}}\int\limits_{[a_n,1]}-\log_a(x) x^{-\frac{2}{q}}\inf\left\{-\log_a(x)x^{-\frac{1}{q}}M^{\frac{1}{q}},\sqrt{n}\sqrt{V_{n,n}}\right\}dx\\
%&\to 0,
\end{align*}
converges to zero, and thus condition (b) is satisfied. 
%Now let the sequence $(b_n)_{n\in\N}$ be defined by
%
%\[b_n:=\sup\left\{k\in(0,1):\left(\tilde{A}-\log_a\left(\tfrac{x}{2}\right)\right)x^{-\frac{1}{q}}\tilde{M}^{\frac{1}{q}}>\sqrt{n}\sqrt{V_{n,n}}, \ \forall \ x \le k\right\}.\]

%Note that $b_n\to 0$ as $n\to\infty$ by construction. Then
%
%\begin{align*}
%&\frac{1}{\sqrt{n}} \tilde{M}^{\frac{2}{q}} V_{n,n}^{-\frac{3}{2}}\int\limits_{b_n}^{1}\left(\tilde{A}-\log_a\left(\tfrac{x}{2}\right)\right) x^{-\frac{2}{q}}\inf\left\{\left(\tilde{A}-\log_a\left(\tfrac{x}{2}\right)\right)x^{-\frac{1}{q}}\tilde{M}^{\frac{1}{q}},\sqrt{n}\sqrt{V_{n,n}}\right\}dx\\
%&=\frac{1}{\sqrt{n}} \tilde{M}^{\frac{3}{q}} V_{n,n}^{-\frac{3}{2}}\int\limits_{b_n}^{1}\left(\tilde{A}-\log_a\left(\tfrac{x}{2}\right)\right)^2 x^{-\frac{3}{q}}dx
%\underset{n\to\infty}{\to} 0,
%\end{align*}
%
%where the integral exists as $q=Q\frac{2+\gamma}{\gamma}>3$. Furthermore,
%
%\begin{align*}
%&\frac{1}{\sqrt{n}} \tilde{M}^{\frac{2}{q}} V_{n,n}^{-\frac{3}{2}}\int\limits_{0}^{b_n}\left(\tilde{A}-\log_a\left(\tfrac{x}{2}\right)\right) x^{-\frac{2}{q}}\inf\left\{\left(\tilde{A}-\log_a\left(\tfrac{x}{2}\right)\right)x^{-\frac{1}{q}}\tilde{M}^{\frac{1}{q}},\sqrt{n}\sqrt{V_{n,n}}\right\}dx\\
%&= \tilde{M}^{\frac{2}{q}} V_{n,n}^{-1}\int\limits_{0}^{b_n}\left(\tilde{A}-\log_a\left(\tfrac{x}{2}\right)\right) x^{-\frac{2}{q}}dx\underset{n\to\infty}{\to} 0,
%\end{align*}
%
%as $(b_n)_{n\in\N}$ is a null sequence, which finally proves the validity of \eqref{eq:fidis_Rio_(b)}. 
Applying Corollary 1 in \cite{Rio199535}, it holds that $\sum_{i=1}^{n}\xi_{n,i}/(V_{n,n})^{1/2}$ converges to the standard normal distribution and thus the assertion follows.
\end{proof}

\subsection{Proofs of lemmata}\label{Lemma-proof}

\begin{proof}[Proof of Lemma \ref{Lemma2}]
For some $l$-times differentiable function $h:\bm{J}_n\to \R$ define the norm
\[\|h\|_{l+\eta}:=\max\limits_{\substack{\bm{k}\in\N_0^d \\ 1\le |\bm{k}|\le l}}\sup\limits_{\bm{x}\in\bm{J}_n}\left|D^{\bm{k}}h(\bm{x})\right|+\max\limits_{\substack{\bm{k}\in\N_0^d \\ |\bm{k}|=l}}\sup\limits_{\substack{\bm{x},\bm{y}\in\bm{J}_n\\\bm{x}\neq \bm{y}}}\frac{\left|D^{\bm{k}}h(\bm{x})-D^{\bm{k}}h(\bm{y})\right|}{\|\bm{x}-\bm{y}\|^{\eta}}\]
and the function class $\cH:=\cC_{1,n}^{l+\eta}(\bm{J}_n):=\{h:\bm{J}_n\to\R: \|h\|_{l+\eta}\le 1,\sup_{\bm{x}\in\bm{J}_n}\left|h(\bm{x})\right|\le z_n\sqrt{\log n}\}$
with $z_n:=q_n\delta_n ((\log n)/(nh_n^d))^{1/2}$.
The third bandwidth condition in \textbf{(B2)} implies
\begin{align*}
\left(\sqrt{\frac{\log n}{nh_n^d}}+h_n^rp_n\right)q_n\delta_n=O\left(\sqrt{\frac{\log n}{nh_n^d}}q_n\delta_n\right)
\end{align*}
and thus Lemma \ref{Raten kernel} implies that
 $P(\hat{h}_n\in \cC_{1,n}^{l+\eta}(\bm{J}_n))\to 1$ as $n\to\infty$ holds for  $\hat{h}_n(\bm{x})=(m(\bm{x})-\hat{m}_n(\bm{x}))\om_n(\bm{x})$.
Let furthermore $\bm{X}_t\sim P$ and  $\cF:=\{\bm{x}\mapsto I\{\bm{x}\le \bm{z}\}:\bm{z}\in\R^d\}$. 
%Then the assertion of Lemma \ref{Lemma2} is directly implied by 
%
%\begin{align}
%\sup\limits_{s\in[0,1]}\sup\limits_{\ph\in\cF}\left|\frac{1}{\sqrt{n}}\sum\limits_{i=1}^{\lf ns \rf} \left(\hat{h}_n(\bm{X}_i)\ph(\bm{X}_i)-\int \hat{h}_n\ph dP\right)\right|=o_P(1).\label{eq:Lemma_201}
%\end{align}
%The proof of \eqref{eq:Lemma_201} consists of three main steps. In the first step, we show that the function $\hat{h}_n$ lies in some function class $\cH$ with probability converging to $1$. 
%Considering the supremum over all possible functions $h\in\cH$ instead of $\hat{h}_n$ then simplifies the problem as the functions $h\in\cH$ do not depend on the observations anymore (while $\hat{h}_n$ does). Moreover $\cH$ is well understood and controlled in the sense of metric entropy properties. 
%Secondly covering $[0,1]$ by finitely many intervals, and $\cF$ and $\cH$ by finitely many brackets respectively, the suprema will be bounded by the maxima over finitely many objects. Note that, while the centers of the intervalls used to cover $[0,1]$ are again elements of $[0,1]$, the lower and upper bounds of the brackets do not need to lie in $\cF$ and $\cH$ respectively. However they do possess some main properties of these function classes. The third and last step will be an application of Theorem 2.1 in \cite{Liebscher199669} which gives an exponential inequality for strongly mixing processes. 
Then the assertion of the lemma follows if we show 
\begin{align*}
\sup\limits_{s\in[0,1]}\sup\limits_{\ph\in\cF}\sup\limits_{h\in\cH}\left|\frac{1}{\sqrt{n}}\sum\limits_{i=1}^{\lf ns \rf} \left(h(\bm{X}_i)\ph(\bm{X}_i)-\int h\ph dP\right)\right|=o_P(1).
%\label{eq:Lemma_202}
\end{align*}

To this end let $\ep_{n1}=\ep_{n2}=n^{-1/2}$ and $\ep_{n3}=n^{-1/2}/(\log n)$ and let further $0=s_1<\dots<s_{K_n}=1$ partition $[0,1]$ in intervals of length $2\ep_{n1}$ such that $K_n=O(\ep_{n1}^{-1})$. Furthermore, let $J_n:=N_{[~]}\left(\ep_{n2},\cF,\|\cdot\|_{L_2(P)}\right)$ and $M_n:=N_{[~]}\left(\ep_{n3},\cH,\|\cdot\|_{\infty}\right)$, where $\|\cdot\|_{\infty}$ is the supremum norm on $\bm{J}_n$. Let $[\ph_1^{l},\ph_1^{u}],\dots,[\ph_{J_n}^{l},\ph_{J_n}^{u}]$ denote the brackets needed to cover $\cF$. Note that they can be chosen to be indicator functions and therefore non negative. Let furthermore $[h_1^{l},h_1^{u}],\dots,[h_{M_n}^{l},h_{M_n}^{u}]$ define the brackets needed to cover $\cH$. It can be shown that $J_n=O\left(\ep_{n2}^{-2d}\right)$ and $M_{n}=O(\exp(c_n^d\ep_{n3}^{-d/(l+\eta)}))$ and further
\begin{align*}
&\sup\limits_{s\in[0,1]}\sup\limits_{\ph\in\cF}\sup\limits_{h\in\cF}\left|\frac{1}{\sqrt{n}}\sum\limits_{i=1}^{\lf ns \rf}\left(h(\bm{X}_i)\ph(\bm{X}_i)-\int h\ph dP\right)\right|\\
%&=\max\limits_{\substack{\substack{1\le k\le K_n\\1\le j\le J_n}\\1\le m\le M_n}}\sup\limits_{\substack{s\in[0,1]\\|s-s_k|\le\ep_{n1}}}\sup\limits_{\ph\in[\ph_j^{l},\ph_j^u]}\sup\limits_{h\in[h_m^l,h_m^u]}\left|\frac{1}{\sqrt{n}}\sum\limits_{i=1}^{\lf ns \rf}\left(h(\bm{X}_i)\ph(\bm{X}_i)-\int h\ph dP\right)\right|\\
&\le\max\limits_{\substack{\substack{1\le k\le K_n\\1\le j\le J_n}\\1\le m\le M_n}}\sup\limits_{\ph\in[\ph_j^{l},\ph_j^u]}\sup\limits_{h\in[h_m^l,h_m^u]}\left|\frac{1}{\sqrt{n}}\sum\limits_{i=1}^{\lf ns_k \rf}\left(h(\bm{X}_i)\ph(\bm{X}_i)-\int h\ph dP\right)\right|\\
&+\max\limits_{\substack{\substack{1\le k\le K_n\\1\le j\le J_n}\\1\le m\le M_n}}\sup\limits_{\substack{s\in[0,1]\\|s-s_k|\le\ep_{n1}}}\sup\limits_{\ph\in[\ph_j^{l},\ph_j^u]}\sup\limits_{h\in[h_m^l,h_m^u]}\Bigg|\frac{1}{\sqrt{n}}\sum\limits_{i=1}^{n}\left(h(\bm{X}_i)\ph(\bm{X}_i)-\int h\ph dP\right)\\
&~~~~~~~~~~~~~~~~~~~~~~~~~~~~~~~~~~~~~~~~~~~~~~~~~~~~~~\times\left(I\left\{\textstyle\frac{i}{n}\le s\right\}-I\left\{\textstyle\frac{i}{n}\le s_k\right\}\right)\Bigg|\\
%&=\max\limits_{\substack{\substack{1\le k\le K_n\\1\le j\le J_n}\\1\le m\le M_n}}\sup\limits_{\ph\in[\ph_j^{l},\ph_j^u]}\sup\limits_{h\in[h_m^l,h_m^u]}\left|\frac{1}{\sqrt{n}}\sum\limits_{i=1}^{\lf ns_k \rf}\left(h(\bm{X}_i)\ph(\bm{X}_i)-\int h\ph dP\right)\right|+o_P(1).
%\end{align*}
%Using the brackets of $\cH$ and $\cF$ it can be shown that
%
%\begin{align*}
%&\max\limits_{\substack{\substack{1\le k\le K_n\\1\le j\le J_n}\\1\le m\le M_n}}\sup\limits_{\ph\in[\ph_j^{l},\ph_j^u]}\sup\limits_{h\in[h_m^l,h_m^u]}\left|\frac{1}{\sqrt{n}}\sum\limits_{i=1}^{\lf ns_k \rf}\left(h(\bm{X}_i)\ph(\bm{X}_i)-\int h\ph dP\right)\right|\\
&\le\max\limits_{\substack{\substack{1\le k\le K_n\\1\le j\le J_n}\\1\le m\le M_n}}\left\{\left|\frac{1}{\sqrt{n}}\sum\limits_{i=1}^{\lf ns_k\rf}\left(h_m^u(\bm{X}_i)I\{h_m^u(\bm{X}_i)\ge 0\}\ph_j^u(\bm{X}_i)-\int h_m^uI\{h_m^u\ge 0\}\ph_j^udP\right)\right|\right.\\
&\hspace{2cm}+\left|\frac{1}{\sqrt{n}}\sum\limits_{i=1}^{\lf ns_k\rf}\left(h_m^u(\bm{X}_i)I\{h_m^u(\bm{X}_i)< 0\}\ph_j^l(\bm{X}_i)-\int h_m^uI\{h_m^u< 0\}\ph_j^ldP\right)\right| \\
&\hspace{2cm}+\left|\frac{1}{\sqrt{n}}\sum\limits_{i=1}^{\lf ns_k\rf}\left(h_m^l(\bm{X}_i)I\{h_m^l(\bm{X}_i)\ge 0\}\ph_j^l(\bm{X}_i)-\int h_m^lI\{h_m^l\ge 0\}\ph_j^ldP\right)\right|\\
&\hspace{2cm}+\left.\left|\frac{1}{\sqrt{n}}\sum\limits_{i=1}^{\lf ns_k\rf}\left(h_m^l(\bm{X}_i)I\{h_m^l(\bm{X}_i)< 0\}\ph_j^u(\bm{X}_i)-\int h_m^lI\{h_m^l< 0\}\ph_j^udP\right)\right|\right\}\\
&+o(1).
\end{align*}
In what follows we only consider the first line on the right hand side, while the other ones can be treated similarly. 
% show 
%
%\begin{align*}
%\max\limits_{\substack{\substack{1\le k\le K_n\\1\le j\le J_n}\\1\le m\le M_n}}\left|\frac{1}{\sqrt{n}}\sum\limits_{i=1}^{\lf ns_k\rf}\left(h_m^u(\bm{X}_i)I\{h_m^u(\bm{X}_i)\ge 0\}\ph_j^u(\bm{X}_i)-\int h_m^uI\{h_m^u\ge 0\}\ph_j^udP\right)\right|=o_P(1)
%\end{align*}
%
%by an application of 
We apply Theorem 2.1 of \cite{Liebscher199669} to the random variable (for $m,j,k$ fixed)
\[Z_{i}:=\left(h_m^u(\bm{X}_i)I\{h_m^u(\bm{X}_i)\ge 0\}\ph_j^u(\bm{X}_i)-\int h_m^uI\{h_m^u\ge 0\}\ph_j^udP\right)I\left\{\frac{i}{n}\le s_k\right\}. \]
The mixing coefficient of $\{Z_t:1\le t\le n\}$ can be bounded by the mixing coefficient of $\{\bm{X}_t:t\in\Z\}$ due to \cite{Bradley1985165}, Section 2, remark (iv). Further, the variables are centered and have a bound of order $O(z_n\log n)$. Applying Theorem 2.1 to $\sum_{i=1}^{n}Z_i$ yields for all $\epsilon>0$ and $n\in\N$ large enough
\begin{align*}
&P\left(\max\limits_{\substack{\substack{1\le k\le K_n\\1\le j\le J_n}\\1\le m\le M_n}}\left|\frac{1}{\sqrt{n}}\sum\limits_{i=1}^{\lf ns_k\rf}\left(h_m^u(\bm{X}_i)I\{h_m^u(\bm{X}_i)\ge 0\}\ph_j^u(\bm{X}_i)-\int h_m^uI\{h_m^u\ge 0\}\ph_j^udP\right)\right|>\epsilon\right)\\
&\le\hspace{-0.2cm}\sum\limits_{\substack{\substack{1\le k\le K_n\\1\le j\le J_n}\\1\le m\le M_n}}\hspace{-0.3cm}P\left(\left|\frac{1}{\sqrt{n}}\sum\limits_{i=1}^{\lf ns_k\rf}\left(h_m^u(\bm{X}_i)I\{h_m^u(\bm{X}_i)\ge 0\}\ph_j^u(\bm{X}_i)-\int h_m^uI\{h_m^u\ge 0\}\ph_j^udP\right)\right|>\epsilon\right)\\
&\le K_nJ_nM_n4\exp\left(-\frac{n\epsilon^2}{64n \lf (nh_n^d)^{1/2}\rf z_n^2\log(n)+\frac{8}{3}\sqrt{n}\epsilon \lf (nh_n^d)^{1/2}\rf z_n\log(n)^{1/2}}\right)\\
&+K_nJ_nM_n4\frac{n}{\lf (nh_n^d)^{1/2}\rf}\alpha\left(\lf (nh_n^d)^{1/2}\rf\right)\\
&=o(1),
\end{align*}
where the first and second bandwidth constraints in \textbf{(B2)} were used. Details are omitted for the sake of brevity. %They can be found in Mohr (2018) (diss).
\end{proof}

%\bigskip

\begin{proof}[Proof of Lemma \ref{Lemma:Teil mit m}]
First, using the uniform rates of convergence results in Lemma \ref{Raten kernel} applied to $(m(\bm{X}_t),\bm{X}_t)_{t\in\Z}$ together with the first and the last bandwidth condition in  assumption \textbf{(B2)} as well as the second condition in assumption \textbf{(B1)}, it can be shown that
\begin{align*}
\sup\limits_{\bm{z}\in\R^d}\left|\frac{1}{\sqrt{n}}\sum\limits_{i=1}^{n}\int_{\R^d}\left(m(\bm{y})-m(\bm{X}_i)\right)K_{h_n}(\bm{y}-\bm{X}_i)\om_n(\bm{y})I\{\bm{y}\le \bm{z}\}\left(\frac{f(\bm{y})}{\hat{f}_n(\bm{y})}-1\right)d\bm{y}\right|=o_P(1).
\end{align*}
Defining the function class
\[\cF_{n,1}:=\left\{\bm{x}\mapsto \int_{(-\bm{\infty},\bm{z}]}(m(\bm{y})-m(\bm{x}))K_{h_n}(\bm{y}-\bm{x})\om_n(\bm{y})d\bm{y}:\bm{z}\in\R^d\right\},\]
and imposing $\bm{X}_t\sim P$, the assertion of the lemma follows if we show
\begin{align}
\sup\limits_{\ph\in\cF_{n,1}}\left|\frac{1}{\sqrt{n}}\sum\limits_{i=1}^{n}\left(\ph(\bm{X}_i)-\int \ph dP\right)\right|=o_P(1),\label{eq:Teil mit m:step2}
\end{align}
\begin{align}
\sup\limits_{\ph\in\cF_{n,1}}\left|\int \ph dP\right|=o\left(n^{-1/2}\right).\label{eq:Teil mit m:step3}
\end{align} 
The proof of \eqref{eq:Teil mit m:step2} uses similar techniques to those in the proof of Lemma \ref{Lemma2}, while the proof of \eqref{eq:Teil mit m:step2} follows applying Taylor expansion for $m$ and $f$. Details are given in the supplement.
\end{proof}

%\bigskip

\begin{proof}[Proof of Lemma \ref{Lemma:Teil mit U}]
First, using the uniform rates of convergence results in Lemma \ref{Raten kernel} applied to $(U_t,\bm{X}_t)_{t\in\Z}$ together with assumptions on the bandwidth, it can be shown that
\[\sup\limits_{\bm{z}\in\R^d}\left|\frac{1}{\sqrt{n}}\sum\limits_{i=1}^{n}U_i\int_{\R^d}K_{h_n}(\bm{y}-\bm{X}_i)\om_n(\bm{y})I\{\bm{y}\le \bm{z}\}\left(\frac{f(\bm{y})}{\hat{f}_n(\bm{y})}-1\right)d\bm{y}\right|=o_P(1).\]
Furthermore, it can be shown that uniformly in $\bm{z}\in\R^d$ and for $q:=Q\frac{2+\gamma}{2}>2$
\begin{align*}
&\frac{1}{\sqrt{n}}\sum\limits_{i=1}^{n}U_i\Big(\int_{(-\bm{\infty},\bm{z}]} K_{h_n}(\bm{y}-\bm{X}_i)\om_n(\bm{y})d\bm{y}-\om_n(\bm{X}_i)I\{\bm{X}_i\le\bm{z}\}\Big)\\
&=\frac{1}{\sqrt{n}}\sum\limits_{i=1}^{n}\left(U_iI\{|U_i|\le n^{1/q}\}\Big(\int_{(-\bm{\infty},\bm{z}]} K_{h_n}(\bm{y}-\bm{X}_i)\om_n(\bm{y})d\bm{y}-\om_n(\bm{X}_i)I\{\bm{X}_i\le\bm{z}\}\Big)\right.\\
&\hspace{1.5cm}\left.-E\Big[U_iI\{|U_i|\le n^{1/q}\}\Big(\int_{(-\bm{\infty},\bm{z}]} K_{h_n}(\bm{y}-\bm{X}_i)\om_n(\bm{y})d\bm{y}-\om_n(\bm{X}_i)I\{\bm{X}_i\le\bm{z}\}\Big)\Big]\right)\\
&+o_P(1).
\end{align*}
Defining the function class
\begin{align*}
&\cF_{n,2}:=\Big\{(u,\bm{x})\mapsto uI\{|u|\le n^{1/q}\}\Big(\int_{(-\bm{\infty},\bm{z}]}K_{h_n}(\bm{y}-\bm{x})\om_n(\bm{y})d\bm{y}-\om_n(\bm{x})I\{\bm{x}\le\bm{z}\}\Big):\bm{z}\in\R^d\Big\},
\end{align*}
and imposing $(U_t,\bm{X}_t)\sim P$, the assertion of the lemma follows if we show
\begin{align}
\sup\limits_{\ph\in\cF_{n,2}}\left|\frac{1}{\sqrt{n}}\sum\limits_{i=1}^{n}\left(\ph(U_i,\bm{X}_i)-\int \ph dP\right)\right|=o_P(1).\label{eq:Teil mit U_supp}
\end{align}
The proof of \eqref{eq:Teil mit U_supp} uses similar techniques as the proof of Lemma \ref{Lemma2}. Details are given in the supplement.
\end{proof}

%\bigskip

\begin{proof}[Proof of Lemma \ref{Lemma1}]
It will be shown that uniformly in $s\in[0,1]$ and $\bm{z}\in\R^d$,
\begin{align}
\frac{1}{\sqrt{n}}\sum\limits_{i=1}^{\lf ns \rf}U_iI\{\bm{X}_i\le \bm{z}\}I\{\bm{X}_i\notin [-c_n,c_n]^d\}=o_P(1). \label{eq:Teil mit om}
\end{align}
To this end define the function class
\[\cF:=\left\{(u,\bm{x})\mapsto uI\{\bm{x}\le\bm{z}\}I\{\bm{x}\notin [-a,a]^d\}:\bm{z}\in\R^d, a\in\R_{+}\right\}\]
and for $s\in[0,1]$ and $\ph\in\cF$
\[G_n(s,\ph):=\frac{1}{\sqrt{n}}\sum\limits_{i=1}^{\lf ns \rf} \left(\ph(U_i,\bm{X}_i)-\int \ph dP\right),\]
where $(U_t,\bm{X}_t)\sim P$ and $\int \ph dP=0$. Similarly to the proof of Theorem \ref{decomposition} (ii) an application of Theorem 2.5 in \cite{Mohr2017} yields for all $\delta_n\searrow 0$ and with $d(\ph,\psi):=\left\|\ph-\psi\right\|_{L_{Q\frac{2+\gamma}{2}}(P)}$,
\begin{align}
\sup\limits_{\substack{\{s,t\in[0,1],\ph,\psi\in\cF: \\ |s-t|+d(\ph,\psi)<\delta_n\}}}\left|G_n(s,\ph)-G_n(t,\psi)\right|=o_P(1).\label{eq:Teil mit om:(A)}
\end{align}
Note that here the assumption $Q>(d+1)(2+\gamma)$ is needed as $\tilde{N}_{[~]}(\ep,\cF,\|\cdot\|_{L_2(P)})=O(\ep^{-2(d+1)})$. Then, for some fixed $\bm{z}\in\R^d$ defining $\ph_n(u,\bm{x}):=uI\{\bm{x}\le\bm{z}\}I\{\bm{x}\notin [-c_n,c_n]^d\}$, it holds that $\ph_n\in\cF$ for all $n\in\N$ and
\begin{align*}
d(\ph_n,0)
=\|\ph_n\|_{L_{Q\frac{2+\gamma}{2}}(P)}
%&=E\left[|U_i|^{Q\frac{2+\gamma}{2}}I\{\bm{X}_i\le\bm{z}\}I\{\bm{X}_i\notin [-c_n,c_n]^d\}\right]^{\frac{1}{Q}\frac{2}{2+\gamma}}\\
%&\le E\left[c(\bm{X}_i)^{Q}I\{\bm{X}_i\le\bm{z}\}I\{\bm{X}_i\notin [-c_n,c_n]^d\}\right]^{\frac{1}{Q}\frac{2}{2+\gamma}}\\
%&= \left(\int c(\bm{x})^QI\{\bm{x}\le\bm{z}\}I\{\bm{x}\notin [-c_n,c_n]^d\}f(\bm{x})d\bm{x}\right)^{\frac{1}{Q}\frac{2}{2+\gamma}}\\
\le\left(\int c(\bm{x})^QI\{\bm{x}\notin [-c_n,c_n]^d\}f(\bm{x})d\bm{x}\right)^{\frac{1}{Q}\frac{2}{2+\gamma}}
\overset{n\to\infty}{\longrightarrow} 0,
\end{align*}
where the convergence holds by the dominated convergence theorem as $\int c(\bm{x})^Qf(\bm{x})d\bm{x}<\infty$. With
\[\left(\int c(\bm{x})^QI\{\bm{x}\notin [-c_n,c_n]^d\}f(\bm{x})d\bm{x}\right)^{\frac{1}{Q}\frac{2}{2+\gamma}}=:\delta_n\searrow 0\]
it can therefore be concluded that
\begin{align*}
&\sup\limits_{s\in[0,1]}\sup\limits_{\bm{z}\in\R^d}\left|\frac{1}{\sqrt{n}}\sum\limits_{i=1}^{\lf ns \rf}U_iI\{\bm{X}_i\le \bm{z}\}I\{\bm{X}_i\notin [-c_n,c_n]^d\}\right|
%&\le \sup\limits_{s\in[0,1]}\sup\limits_{\{\ph\in\cF: d(\ph,0)<\delta_n\}}\left|G_n(s,\ph)-G_n(s,0)\right|\\
\le \sup\limits_{\substack{\{s,t\in[0,1],\ph,\psi\in\cF: \\ |s-t|+d(\ph,\psi)<\delta_n\}}}\left|G_n(s,\ph)-G_n(t,\psi)\right|.
\end{align*}
With \eqref{eq:Teil mit om:(A)} the last term is $o_P(1)$ which proves the assertion in \eqref{eq:Teil mit om} and therefore the assertion of the lemma.
\end{proof}

\bigskip

\vspace{0.5cm}
\noindent {\large \textbf{Supplementary material}}

%%%%%%%%%%%%%%%%%%%%%%
\section{Additional simulation results} 
%%%%%%%%%%%%%%%%%%%%%%

In addition to the models in section \ref{simus} in the main paper, in this section we investigate a second order autoregression model of the following form
\begin{alignat*}{3}
&&\text{(model 4)} 
\hspace{1.8cm}&Y_t=m_t(Y_{t-1},Y_{t-2})+\si(Y_{t-1},Y_{t-2})\ep_t, \ \ep_t\sim\mathcal{N}(0,1), & \\
&~&&m_t(x_1,x_2)=\begin{cases}0.9x_1-0.4x_2, \ & t=1,\dots, \lf n/2\rf \\\left(0.9-\Delta_0\right)x_1-0.4x_2, \ & t=\lf n/2\rf +1,\dots, n\end{cases}, \hspace{1.2cm}
%m_t(x)&=(0.5+\Delta_0I\{t\in\{\lf ns_0 \rf +1,\dots, n\}\}e^{-0.8x^2})x,
\end{alignat*}
with $\Delta_0\in\{0,1.3\}$. We consider three different choices for the conditional variance function, namely $\si^2(x_1,x_2)=1$ for an AR(2) model, $\si^2(x_1,x_2)=1+0.4x_1^2$ for an AR(2)-ARCH(1) model and $\si^2(x_1,x_2)=1+0.2x_1^2+0.2x_2^2$ for an AR(2)-ARCH(2) model. In all cases $H_0$ is fulfilled for $\Delta_0=0$ and $H_1$ for $\Delta_0\neq 0$ with a change in the regression function occurring in $\lf n/2\rf$.

The limiting distribution from Corollary \ref{cor:T_dach} applies, but as the covariate is multivariate we do not easily obtain an asymptotically distribution-free test. Thus  we apply the bootstrap procedure from section \ref{bootstrap}. Table \ref{sim_d=2} shows the rejection frequencies for all three models, when using the tests based on $T_{n1}^*$ and $T_{n2}^*$, as well as the bootstrap versions of $KS$ and $CM$ under both $H_0$ and $H_1$. It can be seen that under $H_0$ the tests reject a little more often than in the models considered in section \ref{simus}, overestimating the level of $5\%$ sometimes for finite sample sizes. Under the alternative the number of rejections increases rapidly for $T_{n1}^*$ and $T_{n2}^*$ with increasing $n$, while it stays relatively low for $KS$ and $CM$. In summary, the bootstrap tests perform reasonably well and are therefore an acceptable alternative to the tests using critical values of the limiting distribution, which are here not known due to multidimensional covariates. Furthermore in these models, they outperform the bootstrap versions of the CUSUM tests.

\begin{table}[!htbp]
\small
  \centering
	\caption{Rejection frequencies in model 4}\label{sim_d=2}
\begin{tabular}{@{}r r c r r r r c r r r r @{}} 
    \toprule
\multicolumn{2}{c}{}& \phantom{a} & \multicolumn{4}{c}{under $H_0$} & \phantom{a} & \multicolumn{4}{c}{under $H_1$}\\%[0.2$CM$]
\cmidrule{4-7} \cmidrule{9-12}
%\multicolumn{2}{c}{}& \phantom{a} & \multicolumn{2}{c}{$\hat{T}^*_n(s,z)$} & \multicolumn{2}{c}{$\hat{T}^*_n(s,\infty)$} & \phantom{a} & \multicolumn{2}{c}{$\hat{T}^*_n(s,z)$} &  \multicolumn{2}{c}{$\hat{T}^*_n(s,\infty)$} \\%[0.2$CM$]
%\cmidrule(l){4-5} \cmidrule(l){6-7} \cmidrule(l){9-10} \cmidrule(l){11-12}
model & $n$  &  & $T_{n1}^*$   & $T_{n2}^*$  &  $KS$ & $CM$  &  & $T_{n1}^*$   & $T_{n2}^*$  &  $KS$ & $CM$\\ 
\midrule 
AR(2)    & $100$  & &  	$0.082$  &  $0.068$	 &  $0.082$  &  $0.074$ 	& &		$0.124$  &  $0.110$  &	$0.080$  &  $0.070$ \\
				 %& $200$  & &  $0.074$  &  $0.072$  &  $0.064$  &  $0.072$   & &   $0.174$  &  $0.164$  &  $0.056$  &  $0.052$ \\
			   & $300$  & &   $0.064$  &  $0.070$  &  $0.054$  &  $0.048$   & &   $0.284$  &  $0.308$  &  $0.096$  &  $0.070$ \\		
				 & $500$  & &   $0.076$  &  $0.058$  &  $0.068$  &  $0.060$   & &   $0.480$  &  $0.532$  & $0.098$  &  $0.070$ \\
\midrule 
AR(2)-   & $100$  & &  	$0.076$  &  $0.060$  & 	$0.094$  &  $0.068$		&	&	  $0.098$  &  $0.106$  &  $0.070$  &  $0.058$ \\
				 %& $200$  & &  $0.068$  &  $0.066$  &  $0.064$  &  $0.056$    & &  $0.184$  &  $0.194$  &  $0.106$  &  $0.072$  \\
ARCH(1)  & $300$  & &  	$0.084$  &  $0.098$  &	$0.086$  &  $0.096$  	&	&   $0.252$  &  $0.282$  & $0.088$  &  $0.074$ \\		
				 & $500$  & &  	$0.098$  &  $0.078$  & 	$0.080$  &  $0.074$		& &	  $0.476$  &  $0.484$  & 	$0.120$  &  $0.078$ \\		
\midrule 
AR(2)-   & $100$  & &  	$0.076$  &  $0.064$  &	$0.064$  &  $0.044$		&	&  	$0.096$  &  $0.104$  & $0.072$  &  $0.050$ \\
				 %& $200$  & &   $0.086$  &  $0.078$  &	 $0.074$  &  $0.074$ 	&	&	  $0.156$  &  $0.178$  &	$0.076$  &  $0.068$ \\
ARCH(2)  & $300$  & &  	$0.100$  &  $0.082$  &	$0.092$  &  $0.076$   &	&   $0.226$  &  $0.236$  &  $0.108$  &  $0.074$\\		
				 & $500$  & &  	$0.082$  &  $0.068$  &  $0.076$  &  $0.056$ 	&	&   $0.392$  &  $0.420$  &  $0.094$  &  $0.068$\\				
\bottomrule  
\end{tabular}%
\end{table}

%%%%%%%%%%%%%%%%%%%%%%
\section{Proofs of auxiliary results} 
%%%%%%%%%%%%%%%%%%%%%%

In this section we give the proofs of \eqref{eq:Teil mit m:step2} and \eqref{eq:Teil mit m:step3} in Lemma \ref{Lemma:Teil mit m} as well as \eqref{eq:Teil mit U_supp} in Lemma \ref{Lemma:Teil mit U}. 

%%%%%%%%%%%%%%%%%%%%%%%%%%%%%%%%%%% Proof: Teil mit m%%%%%%%%%%%%%%%%%%%%%%%%%%%%%%%%%%%%%%%%%%%%%%%%%%%%%%%%%%%%%%%%%%%%%%%

\begin{proof}[Proof of \eqref{eq:Teil mit m:step2} and \eqref{eq:Teil mit m:step3} in Lemma \ref{Lemma:Teil mit m}]
For the proof of \eqref{eq:Teil mit m:step2} let $\ep_{n}:=n^{-1/2}/(\log n)$ and $J_n:=N_{[~]}\left(\ep_n,\cF_{n,1},\|\cdot\|_{L_1(P)}\right)$. It can be shown that there exists a partition $\bm{z}_1,\dots,\bm{z}_{J_n}$ of $\R^d$ such that $\|\ph_{j}^u-\ph_{j}^l\|_{L_1(P)}\le \ep_n$ for all $j\in\{1,\dots,J_n\}$, where  
\begin{align*}
&\ph_{j}^u(\bm{x}):=\hspace{-0.35cm}\int\limits_{(-\bm{\infty},\bm{z}_{j}]}\hspace{-0.25cm}(m(\bm{y})-m(\bm{x}))K_{h_n}\left(\bm{y}-\bm{x}\right)I\{(m(\bm{y})-m(\bm{x}))K_{h_n}\left(\bm{y}-\bm{x}\right)\ge 0\}\om_n(\bm{y})d\bm{y}\\
&+\int\limits_{(-\bm{\infty},\bm{z}_{j-1}]}(m(\bm{y})-m(\bm{x}))K_{h_n}\left(\bm{y}-\bm{x}\right)I\{(m(\bm{y})-m(\bm{x}))K_{h_n}\left(\bm{y}-\bm{x}\right)< 0\}\om_n(\bm{y})d\bm{y}
\end{align*}
and
\begin{align*}
&\ph_{j}^l(\bm{x}):=\hspace{-0.45cm}\int\limits_{(-\bm{\infty},\bm{z}_{j-1}]}\hspace{-0.35cm}(m(\bm{y})-m(\bm{x}))K_{h_n}\left(\bm{y}-\bm{x}\right)I\{(m(\bm{y})-m(\bm{x}))K_{h_n}\left(\bm{y}-\bm{x}\right)\ge 0\}\om_n(\bm{y})d\bm{y}\\
&+\int\limits_{(-\bm{\infty},\bm{z}_{j}]}(m(\bm{y})-m(\bm{x}))K_{h_n}\left(\bm{y}-\bm{x}\right)I\{(m(\bm{y})-m(\bm{x}))K_{h_n}\left(\bm{y}-\bm{x}\right)< 0\}\om_n(\bm{y})d\bm{y}.
\end{align*}
It then holds that $J_n=O(\ep_n^{-d})$. Using these brackets of $\cF_{n,1}$, it can be shown that
\begin{align*}
&\sup\limits_{\ph\in\cF_{n,1}}\left|\frac{1}{\sqrt{n}}\sum\limits_{i=1}^{n}\left(\ph(\bm{X}_i)-\int \ph dP\right)\right|\\
&=\max\limits_{1\le j\le J_n}\sup\limits_{\ph\in[\ph_{j}^l,\ph_{j}^u]}\left|\frac{1}{\sqrt{n}}\sum\limits_{i=1}^{n}\left(\ph(\bm{X}_i)-\int \ph dP\right)\right|\\
&\le \max\limits_{1\le j\le J_n} \max\left\{\left|\frac{1}{\sqrt{n}}\sum\limits_{i=1}^{n}\left(\ph_{j}^u(\bm{X}_i)-\int \ph_{j}^u dP\right)\right|,\left|\frac{1}{\sqrt{n}}\sum\limits_{i=1}^{n}\left(\ph_{j}^l(\bm{X}_i)-\int \ph_{j}^l dP\right)\right|\right\}+o(1),
\end{align*}
where it is sufficient to discuss the proof of
\[\max\limits_{1\le j\le J_n}\left|\frac{1}{\sqrt{n}}\sum\limits_{i=1}^{n}\left(\ph_{j}^u(\bm{X}_i)-\int \ph_{j}^u dP\right)\right|=o_P(1)\]
as the other assertion works analogously. By defining
\[\ph_{j,1}^u(\bm{x}):=\hspace{-0.15cm}\int\limits_{(-\bm{\infty},\bm{z}_{j}]}\hspace{-0.05cm}(m(\bm{y})-m(\bm{x}))K_{h_n}\left(\bm{y}-\bm{x}\right)I\{(m(\bm{y})-m(\bm{x}))K_{h_n}\left(\bm{y}-\bm{x}\right)\ge 0\}\om_n(\bm{y})d\bm{y}\]
and
\[\ph_{j,2}^u(\bm{x}):=\hspace{-0.35cm}\int\limits_{(-\bm{\infty},\bm{z}_{j-1}]}\hspace{-0.25cm}(m(\bm{y})-m(\bm{x}))K_{h_n}\left(\bm{y}-\bm{x}\right)I\{(m(\bm{y})-m(\bm{x}))K_{h_n}\left(\bm{y}-\bm{x}\right)< 0\}\om_n(\bm{y})d\bm{y}\]
it holds that $\ph_{j}^u(\bm{x})=\ph_{j,1}^u(\bm{x})+\ph_{j,2}^u(\bm{x})$. Thus, again the problem is reduced to showing
\[\max\limits_{1\le j\le J_n}\left|\frac{1}{\sqrt{n}}\sum\limits_{i=1}^{n}\left(\ph_{j,1}^u(\bm{X}_i)-\int \ph_{j,1}^u dP\right)\right|=o_P(1),\]
the other assertion works analogously. Similar to before we apply Theorem 2.1 of \cite{Liebscher199669} to the random variable $Z_i:=\ph_{j,1}^u(\bm{X}_i)-\int \ph_{j,1}^u dP$ for fixed $j$. Note that the mixing properties are the same as the ones of the original process. Further the variables are centered and posses a bound of order $O(h_nq_n)$. For all $\epsilon>0$ and $n\in\N$ large enough it can then be shown that
\begin{align*}
&P\left(\max\limits_{1\le j\le J_n}\left|\frac{1}{\sqrt{n}}\sum\limits_{i=1}^{n}\left(\ph_{j,1}^u(\bm{X}_i)-\int \ph_{j,1}^u dP\right)\right|>\epsilon\right)\\
&\le \sum\limits_{j=1}^{J_n}P\left(\left|\frac{1}{\sqrt{n}}\sum\limits_{i=1}^{n}\left(\ph_{j,1}^u(\bm{X}_i)-\int \ph_{j,1}^u dP\right)\right|>\epsilon\right)\\
&\le J_n 4\exp\left(-\frac{n\epsilon^2}{64n (\lf (\log n)^2\rf+1)h_n^2q_n^2+\frac{8}{3}\sqrt{n}\epsilon (\lf (\log n)^2\rf+1)h_nq_n}\right)\\
&+J_n4\frac{n}{ \lf (\log n)^2\rf+1}\alpha\left(\lf (\log n)^2\rf+1\right)\\
&=o(1),
\end{align*}
where eventually the last bandwidth condition in assumption \textbf{(B2)} was used. The assertion in \eqref{eq:Teil mit m:step3} can be shown by using Taylor's expansion for both $m$ and $f$ up to order $r-1$ and the assumptions in \textbf{(F1)}. Thus
\begin{align*}
\sup\limits_{\ph\in\cF_{n,1}}\left|\int \ph dP\right|
&=\sup\limits_{\bm{z}\in\R^d}\left|\int_{\R^d}\int_{\R^d}(m(\bm{y})-m(\bm{x}))\frac{1}{h_n^d}K\left(\frac{\bm{y}-\bm{x}}{h_n}\right)\om_n(\bm{y})I\{\bm{y}\le \bm{z}\}d\bm{y}f(\bm{x})d\bm{x}\right|\\
&\le\int_{\R^d}\left|\int_{\R^d}(m(\bm{y})-m(\bm{y}-\bm{t}h_n))K(\bm{t})\om_n(\bm{y})f(\bm{y}-\bm{t}h_n)d\bm{t}\right|d\bm{y}\\
&=O(h_n^rp_nq_n)=o\left(n^{-1/2}\right),
\end{align*}
where the last equality holds by the third condition in \textbf{(B2)}.
\end{proof}

\begin{proof}[Proof of \eqref{eq:Teil mit U_supp} in Lemma \ref{Lemma:Teil mit U}]
%%%%%%%%%%%%%%%%%%%%%%%%%%%%%%%%%%% Proof: Teil mit U%%%%%%%%%%%%%%%%%%%%%%%%%%%%%%%%%%%%%%%%%%%%%%%%%%%%%%%%%%%%%%%%%%%%%%%
To this end let $\ep_{n}:=n^{-1/2}/(\log n)$ and $J_n:=N_{[~]}\left(\ep_n,\cF_{n,2},\|\cdot\|_{L_1(P)}\right)$. It can be shown that there exists a partition $\bm{z}_1,\dots,\bm{z}_{J_n}$ of $\R^d$ such that $\|\ph_{j}^u-\ph_{j}^l\|_{L_1(P)}\le \ep_n$ for all $j\in\{1,\dots,J_n\}$, where  
\begin{align*}
\ph_{j}^u(u,\bm{x})
%&:=uI\{|u|\le n^{1/q}\}I\{u\ge 0\} \int_{(\bm{-\infty},\bm{z}_{\bm{j}}]}K_{h_n}(\bm{y}-\bm{x})I\{K_{h_n}(\bm{y}-\bm{x})\ge 0\}\om_n(\bm{y})d\bm{y}\\
%&+uI\{|u|\le n^{1/q}\}I\{u< 0\} \int_{(\bm{-\infty},\bm{z}_{\bm{j-1}}]}K_{h_n}(\bm{y}-\bm{x})I\{K_{h_n}(\bm{y}-\bm{x})\ge 0\}\om_n(\bm{y})d\bm{y}\\
%&+uI\{|u|\le n^{1/q}\}I\{u\ge 0\} \int_{(\bm{-\infty},\bm{z}_{\bm{j-1}}]}K_{h_n}(\bm{y}-\bm{x})I\{K_{h_n}(\bm{y}-\bm{x})< 0\}\om_n(\bm{y})d\bm{y}\\
%&+uI\{|u|\le n^{1/q}\}I\{u< 0\} \int_{(\bm{-\infty},\bm{z}_{\bm{j}}]}K_{h_n}(\bm{y}-\bm{x})I\{K_{h_n}(\bm{y}-\bm{x})< 0\}\om_n(\bm{y})d\bm{y}\\
%&-uI\{|u|\le n^{1/q}\}I\{u\ge 0\}\om_n(\bm{x})I\{\bm{x}\le \bm{z}_{\bm{j-1}}\}\\
%&-uI\{|u|\le n^{1/q}\}I\{u< 0\}\om_n(\bm{x})I\{\bm{x}\le \bm{z}_{\bm{j}}\}\\
&:=\ph_{j,1}^u(u,\bm{x})+\ph_{j,2}^u(u,\bm{x})+\ph_{j,3}^u(u,\bm{x}),
\end{align*}
where
\begin{align*}
&\ph_{j,1}^u(u,\bm{x}):=uI\{|u|\le n^{1/q}\}I\{u< 0\}\Bigg(\hspace{0.1cm}\int\limits_{(\bm{-\infty},\bm{z}_{j}]}\hspace{-0.3cm}K_{h_n}(\bm{y}-\bm{x})\om_n(\bm{y})d\bm{y}-\om_n(\bm{x})I\{\bm{x}\le \bm{z}_{j}\}\Bigg),\\
&\ph_{j,2}^u(u,\bm{x}):=uI\{|u|\le n^{1/q}\}I\{u\ge 0\}\Bigg(\hspace{0.05cm}\int\limits_{(\bm{-\infty},\bm{z}_{j-1}]}\hspace{-0.6cm}K_{h_n}(\bm{y}-\bm{x})\om_n(\bm{y})d\bm{y}-\om_n(\bm{x})I\{\bm{x}\le \bm{z}_{j-1}\}\Bigg),\\
&\ph_{j,3}^u(u,\bm{x}):=|u|I\{|u|\le n^{1/q}\} \Bigg(\int_{(-\bm{\infty},\bm{z}_{j}]}K_{h_n}(\bm{y}-\bm{x})I\{K_{h_n}(\bm{y}-\bm{x})\ge 0\}\om_n(\bm{y})d\bm{y}\\
&\hspace{5.5cm}-\int_{(-\bm{\infty},\bm{z}_{j-1}]}K_{h_n}(\bm{y}-\bm{x})I\{K_{h_n}(\bm{y}-\bm{x})\ge 0\}\om_n(\bm{y})d\bm{y}\Bigg),
\end{align*}
and similarly,
\begin{align*}
\ph_{j}^l(u,\bm{x})
%&:= uI\{|u|\le n^{1/q}\}I\{u\ge 0\} \int_{(\bm{-\infty},\bm{z}_{\bm{j-1}}]}K_{h_n}(\bm{y}-\bm{x})I\{K_{h_n}(\bm{y}-\bm{x})\ge 0\}\om_n(\bm{y})d\bm{y}\\
%&+uI\{|u|\le n^{1/q}\}I\{u< 0\} \int_{(\bm{-\infty},\bm{z}_{\bm{j}}]}K_{h_n}(\bm{y}-\bm{x})I\{K_{h_n}(\bm{y}-\bm{x})\ge 0\}\om_n(\bm{y})d\bm{y}\\
%&+uI\{|u|\le n^{1/q}\}I\{u\ge 0\} \int_{(\bm{-\infty},\bm{z}_{\bm{j}}]}K_{h_n}(\bm{y}-\bm{x})I\{K_{h_n}(\bm{y}-\bm{x})< 0\}\om_n(\bm{y})d\bm{y}\\
%&+uI\{|u|\le n^{1/q}\}I\{u< 0\} \int_{(\bm{-\infty},\bm{z}_{\bm{j-1}}]}K_{h_n}(\bm{y}-\bm{x})I\{K_{h_n}(\bm{y}-\bm{x})< 0\}\om_n(\bm{y})d\bm{y}\\
%&-uI\{|u|\le n^{1/q}\}I\{u\ge 0\}\om_n(\bm{x})I\{\bm{x}\le \bm{z}_{\bm{j}}\}\\
%&-uI\{|u|\le n^{1/q}\}I\{u< 0\}\om_n(\bm{x})I\{\bm{x}\le \bm{z}_{\bm{j-1}}\}\\
&:=\ph_{j,1}^l(u,\bm{x})+\ph_{j,2}^l(u,\bm{x})+\ph_{j,3}^l(u,\bm{x}),
\end{align*}
where
\begin{align*}
&\ph_{j,1}^l(u,\bm{x}):=uI\{|u|\le n^{1/q}\}I\{u\ge 0\}\Bigg(\hspace{0.1cm}\int\limits_{(\bm{-\infty},\bm{z}_{j}]}\hspace{-0.3cm}K_{h_n}(\bm{y}-\bm{x})\om_n(\bm{y})d\bm{y}-\om_n(\bm{x})I\{\bm{x}\le \bm{z}_{j}\}\Bigg),\\
&\ph_{j,2}^l(u,\bm{x}):=uI\{|u|\le n^{1/q}\}I\{u< 0\}\Bigg(\hspace{0.05cm}\int\limits_{(\bm{-\infty},\bm{z}_{j-1}]}\hspace{-0.6cm}K_{h_n}(\bm{y}-\bm{x})\om_n(\bm{y})d\bm{y}-\om_n(\bm{x})I\{\bm{x}\le \bm{z}_{j-1}\}\Bigg),\\
&\ph_{j,3}^l(u,\bm{x}):=-|u|I\{|u|\le n^{1/q}\} \Bigg(\int_{(-\bm{\infty},\bm{z}_{j}]}K_{h_n}(\bm{y}-\bm{x})I\{K_{h_n}(\bm{y}-\bm{x})\ge 0\}\om_n(\bm{y})d\bm{y}\\
&\hspace{5.5cm}-\int_{(-\bm{\infty},\bm{z}_{j-1}]}K_{h_n}(\bm{y}-\bm{x})I\{K_{h_n}(\bm{y}-\bm{x})\ge 0\}\om_n(\bm{y})d\bm{y}\Bigg).
\end{align*}
It then holds that $J_n=O(\ep_n^-d)$. Using these brackets of $\cF_{n,2}$, it can be shown that
\begin{align*}
&\sup\limits_{\ph\in\cF_{n,2}}\left|\frac{1}{\sqrt{n}}\sum\limits_{i=1}^{n}\left(\ph(U_i,\bm{X}_i)-\int \ph dP\right)\right|\\
%&=\max\limits_{1\le j\le J_n}\sup\limits_{\ph\in[\ph_{j}^l,\ph_{j}^u]}\left|\frac{1}{\sqrt{n}}\sum\limits_{i=1}^{n}\left(\ph(U_i,\bm{X}_i)-\int \ph dP\right)\right|\\
&\le\max\limits_{1\le j\le J_n}\left\{\left|\frac{1}{\sqrt{n}}\sum\limits_{i=1}^{n}\left(\ph_{j}^u(U_i,\bm{X}_i)-\int \ph_{j}^u dP\right)\right|+\left|\frac{1}{\sqrt{n}}\sum\limits_{i=1}^{n}\left(\ph_{j}^l(U_i,\bm{X}_i)-\int \ph_{j}^l dP\right)\right|\right\}+o(1).
\end{align*}
We will only consider the first term, as the second one is treated analogously. 
% discuss the following assertion
%
%\begin{align*}
%\max\limits_{1\le j\le J_n}\left|\frac{1}{\sqrt{n}}\sum\limits_{i=1}^{n}\left(\ph_{j,1}^u(U_i,\bm{X}_i)-\int \ph_{j,1}^u dP\right)\right|=o_P(1).
%\end{align*}
Similar to before we apply Theorem 2.1 of \cite{Liebscher199669} to the random variable $Z_i:=\ph_{j,1}^u(U_i,\bm{X}_i)-\int \ph_{j,1}^u dP$ for fixed $j$. Note that the mixing properties are the same as the ones of the original process. Further, the variables are centered and posses a bound of order $O(n^{1/q})$. For all $\epsilon>0$ and $n\in\N$ large enough it can then be shown that
\begin{align*}
&P\left(\max\limits_{1\le j\le J_n}\left|\frac{1}{\sqrt{n}}\sum\limits_{i=1}^{n}\left(\ph_{j,1}^u(U_i,\bm{X}_i)-\int \ph_{j,1}^u dP\right)\right|>\epsilon\right)\\
&\le \sum\limits_{j=1}^{J_n}P\left(\left|\frac{1}{\sqrt{n}}\sum\limits_{i=1}^{n}\left(\ph_{j,1}^u(U_i,\bm{X}_i)-\int \ph_{j,1}^u dP\right)\right|>\epsilon\right)\\
&\le J_n 4\exp\left(-\frac{n\epsilon^2}{64n (\lf \log(n)^2\rf+1)h_n+\frac{8}{3}\sqrt{n}\epsilon (\lf \log(n)^2\rf+1)n^{1/q}}\right)\\
&+J_n4\frac{n}{ \lf \log(n)^2\rf+1}\alpha\left(\lf \log(n)^2\rf+1\right)\\
&=o(1),
\end{align*}
due to the third bandwidth condition in assumption \textbf{(B2)} and because $q>2$.
\end{proof}

%\newpage

%%%%%%%%%%%%%%%%%%%%%%
%\begin{center}
%{\Large \bf References}
\bibliographystyle{apa}
\bibliography{mybibfile}

\begin{thebibliography}{}

\bibitem[\protect\astroncite{Bradley}{1985}]{Bradley1985165}
Bradley, R.~C. (1985).
\newblock {Basic properties of strong mixing conditions}.
\newblock In Eberlein, E. and Taqqu, M.~S., editors, {\em Dependence in
  Probability and Statistics}, pages 165--192. Birkh{\"{a}}user, Boston.

\bibitem[\protect\astroncite{Burke and Bewa}{2013}]{Burke2013261}
Burke, M.~D. and Bewa, G. (2013).
\newblock {Change-Point Detection for General Nonparametric Regression Models}.
\newblock {\em Open Journal of Statistics}, 3:261--267.

\bibitem[\protect\astroncite{Cai}{2007}]{Cai2007163}
Cai, Z. (2007).
\newblock {Trending time-varying coefficient time series models with serially
  correlated errors}.
\newblock {\em J. Econom.}, 136:163--188.

\bibitem[\protect\astroncite{Delgado and Manteiga}{2001}]{Delgado20011469}
Delgado, M.~A. and Manteiga, W.~G. (2001).
\newblock Significance testing in nonparametric regression based on the
  bootstrap.
\newblock {\em Ann. Statist.}, 29:1469--1507.

\bibitem[\protect\astroncite{Hafner and Herwartz}{2000}]{Hafner2000177}
Hafner, C.~M. and Herwartz, H. (2000).
\newblock {Testing for linear autoregressive dynamics under
  heteroskedasticity}.
\newblock {\em Econom. J.}, 3:177--197.

\bibitem[\protect\astroncite{Hansen}{2008}]{Hansen2008726}
Hansen, B.~E. (2008).
\newblock {Uniform convergence rates for kernel estimation with dependent
  data}.
\newblock {\em Econom. Theory}, 24:726--748.

\bibitem[\protect\astroncite{Hidalgo}{1995}]{Hidalgo1995671}
Hidalgo, J. (1995).
\newblock {A Nonparametric Conditional Moment Test for Structural Stability}.
\newblock {\em Econom. Theory}, 1:671--698.

\bibitem[\protect\astroncite{Honda}{1997}]{Honda199745}
Honda, T. (1997).
\newblock {The CUSUM tests with nonparametric regression residuals}.
\newblock {\em J. Japan Statist. Soc.}, 27:45--63.

\bibitem[\protect\astroncite{Hu\v{s}kov\'{a} and Antoch}{2003}]{Huskova2003201}
Hu\v{s}kov\'{a}, M. and Antoch, J. (2003).
\newblock {Detection of structural changes in regression}.
\newblock {\em Tatra Mt. Math. Publ.}, 26:201--215.

\bibitem[\protect\astroncite{Koul and Stute}{1999}]{Koul1999204}
Koul, H.~L. and Stute, W. (1999).
\newblock Nonparametric model checks for time series.
\newblock {\em Ann. Statist.}, 27:204--236.

\bibitem[\protect\astroncite{Krei{\ss}}{1997}]{Kreiss1997}
Krei{\ss}, J.-P. (1997).
\newblock {Asymptotic properties of residual bootstrap for autoregressions}.
\newblock Technical report, Technical University of Braunschweig, Germany.
\newblock
  \url{https://www.tu-braunschweig.de/Medien-DB/stochastik/kreiss-1997.pdf}.

\bibitem[\protect\astroncite{Krei{\ss} and Lahiri}{2012}]{Kreiss20123}
Krei{\ss}, J.-P. and Lahiri, S.~N. (2012).
\newblock {Bootstrap Methods for Time Series}.
\newblock In T.~Subba~Rao, S. S.~R. and Rao, C., editors, {\em Handbook of
  Statistics}, pages 3--26. Elsevier, Amsterdam.

\bibitem[\protect\astroncite{Kristensen}{2009}]{Kristensen20091433}
Kristensen, D. (2009).
\newblock {Uniform convergence rates of kernel estimators with heterogeneous
  dependent data}.
\newblock {\em Econom. Theory}, 25:1433--1445.

\bibitem[\protect\astroncite{Kristensen}{2012}]{Kristensen2012420}
Kristensen, D. (2012).
\newblock {Non-parametric detection and estimation of structural change}.
\newblock {\em Econom. J.}, 15:420--461.

\bibitem[\protect\astroncite{Liebscher}{1996}]{Liebscher199669}
Liebscher, E. (1996).
\newblock {Strong convergence of sums of $\alpha$-mixing random variables with
  applications to density estimation}.
\newblock {\em Stochastic Process. Appl.}, 65:69--80.

\bibitem[\protect\astroncite{Liu}{1988}]{Liu19881696}
Liu, R.~Y. (1988).
\newblock {Bootstrap procedure under some non-i.i.d. models}.
\newblock {\em Ann. Statist.}, 16:1696--1708.

\bibitem[\protect\astroncite{Mohr}{2018a}]{Mohr2018}
Mohr, M. (2018a).
\newblock {\em {Changepoint detection in a nonparametric time series regression
  model}}.
\newblock {PhD} thesis, University of Hamburg.
\newblock \url{http://ediss.sub.uni-hamburg.de/volltexte/2018/9416/}.

\bibitem[\protect\astroncite{Mohr}{2018b}]{Mohr2017}
Mohr, M. (2018b).
\newblock {Weak Convergence of Sequential Empirical Processes under Weak
  Dependence}.
\newblock preprint on arxiv at \url{https://arxiv.org/abs/1711.05112}.

\bibitem[\protect\astroncite{Perron and Zhou}{2008}]{Perron2008}
Perron, P. and Zhou, J. (2008).
\newblock {Testing Jointly for Structural Changes in the Error Variance and
  Coefficients of a Linear Regression Model}.
\newblock {\em Working paper, Boston University}.

\bibitem[\protect\astroncite{Pitarakis}{2004}]{Pitarakis200432}
Pitarakis, J.-Y. (2004).
\newblock {Least squares estimation and tests of breaks in mean and variance
  under misspecification}.
\newblock {\em Econom. J.}, 7:32--54.

\bibitem[\protect\astroncite{Rio}{1995}]{Rio199535}
Rio, E. (1995).
\newblock {About the Lindeberg method for strongly mixing sequences.}
\newblock {\em ESAIM Probab. Statist.}, 1:35--61.

\bibitem[\protect\astroncite{Shao}{2010}]{Shao2010218}
Shao, X. (2010).
\newblock {The Dependent Wild Bootstrap}.
\newblock {\em J. Am. Stat. Assoc.}, 105:218--235.

\bibitem[\protect\astroncite{Stute}{1997}]{Stute1997613}
Stute, W. (1997).
\newblock {Nonparametric model checks for regression}.
\newblock {\em Ann. Statist.}, 25:613--641.

\bibitem[\protect\astroncite{Su and White}{2010}]{Su20101761}
Su, L. and White, H. (2010).
\newblock {Testing structural change in partially linear models}.
\newblock {\em Econom. Theory}, 26:1761--1806.

\bibitem[\protect\astroncite{Su and Xiao}{2008}]{Su2008347}
Su, L. and Xiao, Z. (2008).
\newblock {Testing structural change in time-series nonparametric regression
  models}.
\newblock {\em Stat. Interface}, 1:347--366.

\bibitem[\protect\astroncite{van~der Vaart and Wellner}{1996}]{vanderVaart1996}
van~der Vaart, A.~W. and Wellner, J.~A. (1996).
\newblock {\em {Weak convergence and empirical processes}}.
\newblock Springer, New York.

\bibitem[\protect\astroncite{Vogt}{2015}]{Vogt2015811}
Vogt, M. (2015).
\newblock {Testing for structural change in time-varying nonparametric
  regression models}.
\newblock {\em Econom. Theory}, 31:811--859.

\bibitem[\protect\astroncite{Wu}{1986}]{Wu19861261}
Wu, C. F.~J. (1986).
\newblock {Jackknife, bootstrap and other resampling methods in regression
  analysis}.
\newblock {\em Ann. Statist.}, 14:1261--1295.

\bibitem[\protect\astroncite{Wu}{2016}]{Wu2016151}
Wu, J. (2016).
\newblock {Detecting structural changes under nonstationary volatility}.
\newblock {\em Econom. Lett.}, 146:151--154.

\bibitem[\protect\astroncite{Xu}{2015}]{Xu2015274}
Xu, K.-L. (2015).
\newblock Testing for structural change under non-stationary variances.
\newblock {\em Econom. J.}, 18:274--305.

\end{thebibliography}
%\end{center}
%%%%%%%%%%%%%%%%%%%%%%

\end{document}